\newtheorem{Definition}{Definition}[subsection]
\newtheorem{Theorem}[Definition]{Theorem}
\newtheorem{Lemma}[Definition]{Lemma}
\newtheorem{Proposition}[Definition]{Proposition}
\newtheorem{Remark}[Definition]{Remark}
\newcommand{\bt}{\begin{Theorem}}
\newcommand{\et}{\end{Theorem}}
\newcommand{\ba}{\begin{eqnarray}}
\newcommand{\ea}{\end{eqnarray}}
\newcommand{\bd}{\begin{Definition}}
\newcommand{\ed}{\end{Definition}}
\newcommand{\bp}{\begin{Proposition}}
\newcommand{\ep}{\end{Proposition}}
\newcommand{\bl}{\begin{Lemma}}
\newcommand{\el}{\end{Lemma}}
\newcommand{\br}{\begin{Remark}}
\newcommand{\er}{\end{Remark}}
\newcommand{\bpf}{\begin{proof}}
\newcommand{\epf}{\end{proof}}
\newcommand{\be}{\begin{equation}}
\newcommand{\ee}{\end{equation}}
\newcommand{\mc}{\mathcal}
\newcommand{\f}{\frac}
\newcommand{\what}{\widehat}
\newcommand{\mf}{\mathfrak}
\newcommand{\R}{\mathbb R}%
\newcommand{\C}{\mathbb C}%
\newcommand{\Z}{\mathbb Z}%
\newcommand{\lt}{\left}%
\newcommand{\rt}{\right}%
\newcommand{\e}{\varepsilon}%
\numberwithin{equation}{section}
\numberwithin{Definition}{section}
\begin{document}
%\begin{linenumbers}
 \title [ Schwartz space isomorphism theorem ]{ On the Schwartz space isomorphism theorem for the Riemannian symmetric spaces}
\author [ Jana ]{  Joydip Jana }
%\date{ \today}
\address [ Joydip Jana]{ Syamaprasad College \\Department of Mathematics\\92, S. P. Mukherjee Road\\Kolkata-700 026, India\\ E-mail : joydipjana@gmail.com}
\keywords{ Schwartz spaces, $\delta$-spherical transform, Helgason Fourier transform}
\thanks{\emph{ Mathematical Subject Classification}: 43A80, 43A85, 43A90}
 \begin{abstract}
 We deduce a proof of the isomorphism theorem for certain closed subspace $\mc S^p_\Gamma(X)$ of the $L^p$-Schwartz class functions $(0< p \leq 2) $ on a Riemannian symmetric space $X$ where $\Gamma$ is a finite subset of $\what{K}_M$. The Fourier transform considered is the Helgason Fourier transform.
Our proof  relies only on the Paley-Wiener theorem for the corresponding class of functions and hence it does not use the complicated higher asymptotics of the elementary spherical functions.
\end{abstract}
\maketitle
%------------------------------------------------------------------------------------------------------------------
 \section*{\textbf{ Introduction}}\label{sec:Introdu}
\setcounter{equation}{0}
Let $X$ be a Riemannian symmetric space realized as $G/K$, where $G$ is a connected, noncompact  semisimple Lie group with finite center. Let us fix  $K$ is a maximal compact subgroup of $G$. The $L^p$-Schwartz space isomorphism theorem  for bi-$K$-invariant functions on the group $G$ under the spherical transform was first proved by Harish-Chandra \cite{Harish58(I),Harish58(II),Harish66} (for $p=2$), Trombi and Varadarajan \cite{Varadarajan-T71} (for $0 < p < 2$). Recently Anker \cite{Anker91} gave a remarkable short and elegant proof of the above theorem for $0 < p \leq 2$. Anker's work does not involve the asymptotic expansion of the elementary spherical functions which has a crucial role in the earlier works. 
The aim of this paper is to extend Anker's technique for the $L^p$-Schwartz class functions (for $0< p \leq 2$) on $X$ and to establish an isomorphism theorem under the Helgason fourier transform (HFT).\\
The main result of this paper is developed in two theorems Theorem \ref{Theorem:main-1} and Theorem \ref{Theo:main theorem 2}. In Theorem \ref{Theorem:main-1} the basic $L^p$-Schwartz space $\mc S^p_\delta(X)$ is the space of operator valued left-$\delta$-type $(\delta \in \what{K}_M )$, smooth functions on $X$. HFT when restricted to $\mc S^p_\delta(X)$ is identified with the `$\delta$-spherical transform'. In Theorem \ref{Theorem:main-1} we establish an isomorphism between the Schwartz spaces $\mc S^p_\delta(X)$ and $\mc S_\delta(\mf a^*_\e)$ under the $\delta$-spherical transform. The image $\mc S_\delta(\mf a^*_\e)$ is a space of matrix valued functions with certain decay defined on a closed complex tube $\mf a^*_\e = \mf a^* + i C^{\e \rho}$. Explicit definition of the space $\mc S_\delta(\mf a^*_\e) $ and the domain $\mf a^*_\e$ will be given in the next section. Restriction of Theorem \ref{Theorem:main-1} to the rank-one case is a part of the result of Eguchi and Kowata \cite{Eguchi76}. Theorem \ref{Theorem:main-1} also relaxes the rank restriction of the similar result obtained in \cite{Jana}.\\
Theorem \ref{Theo:main theorem 2} further extends the isomorphism obtained in Theorem \ref{Theorem:main-1} to the space $\mc S^p_\Gamma(X)$ of scalar valued $K$-finite Schwartz class functions on $X$ for which the left $K$-types lie in a fixed finite subset $\Gamma \subset \what{K}_M$. The transform considered in Theorem \ref{Theo:main theorem 2} is the HFT.\\
We shall closely follow the notations of \cite{Helgason-gga, Helgason-gas}. Some basic definitions and results used in this paper are given in the following section.

%------------------------------------------------------------------------------------------------------------------
 \section{\textbf{Notation and Preliminaries}}\label{sec:Prelimi}
Let $G$ be a connected, noncompact semisimple Lie group with finite center and $K$ be a maximal compact subgroup of $G$. Let $\theta$ be the Cartan involution corresponding to $K$.  Let $X$ be a Riemannian symmetric space realized as $G/K$.
Let $\mf g$ be the Lie algebra of $G$ and $\mf g_\C$ be its complexification. The Hermitian norm of both $\mf g$ and $\mf g_\C$ will be denoted by the notation $\|\cdot \|$.
We denote $\mathfrak k$ for the Lie algebra of the maximal compact subgroup $K$ of $G$. Let $\mathfrak g= \mathfrak k \oplus \mathfrak s$ be the Cartan decomposition of the Lie algebra.
%This structure makes $G/K$ a Riemannian symmetric space of the noncompact type. Any function $f$ on $X$ can be identified with a function on the group $G$ which is right $K$ invariant, i.e $f(gk)=f(g)$ for all $g \in G$ and $k \in K$. \\
We fix a maximal abelian subspace $\mathfrak a$ in $\mathfrak s$. Let $A$ be analytic subgroup of $G$ with the Lie algebra $\mathfrak a$. Let $\mathfrak a^*$  and $\mathfrak a^*_\C$ respectively  be the real dual of $\mf a$ and its  complexification. The Killing form induces a scalar product on $\mathfrak a$ and hence on $\mathfrak a^*$. We shall denote  $\langle \cdot, \cdot \rangle_1$ for the $\C$-bilinear extension of that scalar product to $\mathfrak a^*_\C$.  A semisimple Lie group $G$ is said to be of `real rank-$n$' if $dim \mathfrak a= n$ and the corresponding symmetric spaces $X$ realized as $X=G/K$ are called `rank-$n$' symmetric spaces.\\
Let $\Sigma$ be the root system associated with the pair $(\mf g, \mf a)$. For each $\alpha \in \Sigma$ we write $\mf g_\alpha$ for the corresponding root space. Let $M'$ and $M$ respectively be the normalizer and the centralizer of $A$ in $K$. The quotient $W= M'/M$ be the Weyl group associated with the root system $\Sigma$.

  Let us choose and fix a system of positive roots which we denote by $\Sigma^+ $. Let $\mathfrak a^+$ be the corresponding positive Weyl chamber and $\overline{\mathfrak a^+}$ be its closer. We denote $\mathfrak a^{*+}$ and $\overline {\mathfrak a^{*+} }$ for the similar chambers in $\mathfrak a^*$. We put $A^+ = \exp \mathfrak a^+ $ and $\overline{A^+} = \exp \overline{\mathfrak a^+}$. The element $\rho \in \mathfrak a^*$ is denoted by
\begin{equation}
 \label{eq:rho}
 \rho(H)= \frac{1}{2} \sum_{\alpha \in \Sigma^+} m_\alpha \alpha(H), ~~\mbox{ where}~ m_\alpha=dim \mathfrak g_\alpha~ \mbox{and}~ H \in \mathfrak a.
\end{equation}
Let $\Sigma_0 \subset \Sigma$ be the set of all indivisible roots and $\Sigma_0^+ = \Sigma_0 \cap \Sigma^+$.
$\mathfrak n = \oplus_{\alpha \in \Sigma^+ } \mathfrak g_\alpha $ is a nilpotent subalgebra of $\mathfrak g$. Let $N$ be the nilpotent subgroup of $G$ with the Lie algebra $\mf n$. The Iwasawa decomposition of the group $G$ is given as $G = K A N$. The map $(k,a,n)\mapsto kan$ is a diffeomorphism from $K \times A \times N$ onto $G$. Let $\mathcal H: G \rightarrow \mf a$ and $\mathcal A:G \rightarrow \mf a$ are the $\mathfrak a$ projections of $g \in G$ in Iwasawa $KAN$ and $NAK$ decompositions respectively. These two projections are related by $\mc A(g)= -\mc  H(g^{-1})$ for each $g \in G$. Any element $g \in G$ can therefore be written as $g = k \exp {\mc H (g) }n= n_1 \exp {\mc A(g)} k_1$. In the Iwasawa $KAN$ decomposition the Haar measure of the group $G$ is given by
\begin{equation}
\label{eq:Haar measure KAN}
\int_G f(g) dg = const. \int_K dk \int_{\mathfrak a^+} e^{2 \rho( \mc H(g))} d\mc H(g) \int_N dn ~f(k \exp{\mc H(g)} n),
\end{equation}
where $const.$ is a normalizing constant. The `Cartan decomposition' of the group is $G = K \overline{A^+} K = K \exp \overline{ \mathfrak a^+} K $. Let $g^+$ denote the $\mathfrak a^+$ component of $g \in G$, and we denote $|g|= \|g^+\|$. We have a basic estimate: for some constant $c > 0$
\begin{equation}
\label{eq:Iwasawa < Cartan}
\|\mc H(g)\| \leq c |g|, \mbox{~for each~ } g \in G.
\end{equation}
The Haar measure for the Cartan decomposition is given by
\begin{equation}
\label{eq:Haar measure KAK}
\int _G f(g) dg = const. \int_K dk \int_{\mathfrak a^+} \Delta(H) dh \int_K dk' f(k \exp H k'), ~~(H \in \overline{\mf a^+} )
\end{equation}
where $\Delta(H)= \prod_{\alpha \in \Sigma^+} \sinh^{m_\alpha}H$ and $const.$ is a positive normalizing constant. We shall be using the following estimate for the density $\Delta(H)$:
\begin{equation}
\label{eq:estimate of KAK Haar density}
0\leq \Delta(H) \leq c e^{2 \rho(H)}, \mbox{~for~} H \in \mathfrak a^+.
\end{equation}
A function $f$ on $G$ is said to be `bi-$K$-invariant' if $f(k_1 g k_2)= f(g)$ for all $k_1, k_2  \in K$ and $g \in G$. We refer a function a function as `right-$K$-invariant' if invariant under the right $K$ action on $G$ that is  $f(gk) = f(g)$ for all $k \in K, g \in G$. Althrough in this paper we shall consider a function on the symmetric space $X=G/K$ as a right-$K$-invariant function on the group $G$. For any function space $\mf F(G)$ on $G$ or $\mf F(G/K)$ on $X$, we shall denote $\mf F(G//K)$ for the corresponding subspace of bi-$K$-invariant functions.\\
We denote $\mc C^\infty(G)$ for the set of all smooth functions on $G$. We fix a basis $\{X_j \}$ for the Lie algebra $\mf g$. Let $\mc U(\mf g)$ be the `universal enveloping algebra' over $\mf g$. Let $D_1\cdots D_m, E_1 \cdots E_n \in \mc U(\mf g)$, then the action of $\mc U(\mf g)$ on a function $f \in  \mc C^\infty(G)$ is defined as follows:
\begin{align*}
\label{action of uni env alg }
&f(D_1 \cdots D_m, x, E_1 \cdots E_n)= \nonumber\\  &\f{d}{dt_1} \hspace{-.07in} \mid_{t_1=0}\hspace{-.05in}\cdots\hspace{-.05in} \f{d}{dt_m}\hspace{-.07in}\mid_{t_m=0}~\f{d}{ds_1}\hspace{-.07in}\mid_{s_1=0}\hspace{-.05in} \cdots \hspace{-.05in}\f{d}{ds_n}\hspace{-.07in}\mid_{s_n=0} \hspace{-.03in}f(\exp{\hspace{-.03in}(t_1D_1)\hspace{-.03in}}\cdots \hspace{-.03in}\exp{\hspace{-.03in}(t_mD_m)} x \exp{\hspace{-.03in}(s_1E_1)\hspace{-.03in}}\hspace{-.02in}\cdots\hspace{-.03in} \exp{\hspace{-.03in}(s_nE_n)\hspace{-.03in}}).
\end{align*}
Let $b_{ij}= \mf B(X_i, X_j)$ and $(b^{ij})$ be the inverse of the matrix $(b_{ij})$. We now define a distinguished element, called the `Casimir element', of $\mc U(\mf g)$ by
$\Omega = \sum_{i,j} b^{ij}X_i X_j$.
The differential operator $\Omega$ lies in the center of $\mc U(\mf g)$. The action of the `Laplace-Beltrami operator' $\mathbf{L}$ on $X$ is defined by the action of $\Omega$:
\begin{equation}
\label{action of Laplace-B same as Casimir}
\mathbf{L} f(xK)= f(x, \Omega),\hspace{.5in}  x \in G.
\end{equation}
Let us briefly describe the method of construction of a family of rank-one symmetric spaces, the rank-one reductions, which are totally geodesic submanifolds of the general rank symmetric space $G/K$ ( see \cite[Ch. IX, \S2]{Helgason-dgls}, \cite[Ch. IV, \S6]{Helgason-gga}, \cite{Knapp03} ).
 Let $\beta$ be an indivisible root of the system $\Sigma$ and $\mf g_{(\beta)}$ be the Lie subalgebra
 of $\mf g$ generated by the root spaces $\mf g_\beta, ~\mf g_{2\beta}, ~\mf g_{-\beta}$ and $\mf g_{-2\beta}$. The subalgebra $\mf g_{(\beta)}$ is stable under the Cartan involution and it is simple. Let $G_{(\beta)}$ be the analytic subgroup of $G$ corresponding to the Lie subalgebra $\mf g_{(\beta)}$. The Iwasawa decomposition of $G_{(\beta)}$ be $G_{(\beta)} = K_{(\beta)} A_{(\beta)} N_{(\beta)}$ where $K_{(\beta)} = K \cap G_{(\beta)}$, $A_{(\beta)} = A \cap G_{(\beta)}$ and $N_{(\beta)} =N \cap G_{(\beta)}$. Also the centralizer of $A_{(\beta)}$ in $K_{(\beta)}$ is $M_{(\beta)} = M \cap G_{(\beta)}$. The abelian subgroup  $A_{(\beta)}$ is one-dimensional and its Lie algebra $\mf a_{(\beta)}$ is generated by the element $H_\beta \in \mf a$ determined by $\lambda(H_\beta) = \langle \lambda, \beta \rangle$. Hence $G_{(\beta)}$ is of real-rank-one and consequently $G_{(\beta)}/ K_{(\beta)} $ is a rank-one symmetric space.\\
 The restricted roots of $G_{(\beta)} $ are $\{\beta, 2\beta, -\beta, -2\beta \}$ or $\{ \beta, -\beta \}$ according as $2\beta \in \Sigma$ or not. Let us further consider $\beta$ to be a positive root of $G_{(\beta)}$ thus the Lie algebra $\mf n_{\beta}$ of $N_{(\beta)}$ is the sum of the root spaces $\mf g_\beta$  and $\mf g_{2\beta}$. We write $\rho_{(\beta)}$ for the $\rho$-function of $\mf g_{(\beta)}$. It can be shown that \cite[B.3, page: 483]{Helgason-gga} $\rho(H_\beta) \geq \rho_{(\beta)}(H_\beta)$ for all $\beta \in \Sigma_0^+$. The equality holds only when $\beta$ is simple and in that case $\rho_{(\beta)}$ is exactly the restriction of $\rho$ to $\mf g_{(\beta)}$. For each $\beta \in \Sigma_0$ the restriction $\lambda_\beta$ of $\lambda \in \mf a^*_\C$ to $\mf a_{(\beta)}$ is given  by the expression $\lambda_\beta = \f{{\langle \lambda, \beta \rangle}_1}{\langle \beta, \beta \rangle} \beta$.

Let $\pi_\lambda$ ($\lambda \in \mathfrak a^*_\C$) be the spherical principal series representations of $G$ realized on the Hilbert space $L^2(K/M)$ and given by the following:
\begin{equation}
\label{eq:spherical principle series }
\left\{\pi_\lambda(g)\zeta \right\}(kM)= e^{-(i \lambda+\rho)\mc H(g^{-1}k)} \zeta(\mc K(g^{-1}k)M),
\end{equation}
where $\lambda \in \mathfrak a^*_\C,~ \zeta \in L^2(K/M)$ and $\mc K(g^{-1}k)$ denotes the $K$ part of $g^{-1}k$ in the Iwasawa $KAN$ decomposition. The spherical functions $\varphi_\lambda(\cdot)$, given by the following formula
\begin{equation}
\label{eq:spherical function}
\varphi_\lambda(g)= \int_K e^{(i \lambda - \rho)\mc H(g^{-1}k)} dk, ~\mbox{where} ~g \in G, ~\lambda \in \mathfrak a^*_\C,
\end{equation}
are the matrix coefficient of the principal series representations. For each $g \in G$ and $\omega \in W$ $\varphi_{\omega \lambda}(g) = \varphi_{\lambda}(g)$. We shall use the following basic estimates for the elementary spherical functions for our purpose.
  \begin{enumerate}
  \item For each $H \in \overline{\mathfrak a^+}$ and $ \lambda \in \overline{{\mathfrak a^*}^+}$, we have
  \begin{equation}
\label{eq:estimate of phi lambda}
0 < |\varphi_{-i \lambda}(\exp H)| \leq e^{\lambda (H)} \varphi_0(\exp H),
\end{equation}
here, $\varphi_0(\cdot)$ is the spherical function corresponding to $\lambda = 0$. For a proof of the above estimate see \cite{Gangolli88}, Proposition 4.6.1.\\
  \item For all $g \in G$, $0 < \varphi_0(g) \leq 1$ \cite[Proposition 4.6.3]{Gangolli88} .
   Also for $\overline{\mf a^+} $ we have the following estimate
\begin{eqnarray}
\label{eq:estimate of phi_0}
e^{\rho(H)} \leq \varphi_0(\exp H) \leq ~ {\ss} (1 + \|H\|)^{c_{\mf a}} ~e^{\rho(H)} ,
\end{eqnarray}
   where, ${\ss}, c_{\mf a} > 0$ are group dependent constants.
This is an work of Harish-Chandra, the above
optimal estimate was obtained by Anker \cite{Anker87}.
\end{enumerate}
Let $\mc D(G)$ be the subspace of $\mc C^\infty(G)$ generated by the compactly supported scalar valued smooth functions on $G$.
 For each $f \in \mathcal D( G//K)$  the `spherical Fourier transform' $\mathcal S f$ is defined by
\begin{equation}
\label{eq:spherical transform}
\mathcal S f(\lambda)= \int_G f(x) \varphi_{-\lambda} (x) dx, ~\mbox{where} ~\lambda \in \mathfrak a^*_\C.
\end{equation}
The inversion of the spherical transform is given by
\begin{equation}
\label{eq:inversion of spherical transform}
f(x)= \frac{1}{|W|} \int_{\mathfrak a^*_\C} \varphi_\lambda(x) ~\mathcal S f(\lambda)~ |\mathbf{c}(\lambda)|^{-2} d \lambda,
\end{equation}
where $|W|$ is the cardinality of the group $W$ and $\mathbf{c}(\lambda)$ is the `Harish-Chandra $\mathbf{c}$-function'. For our purpose we shall use the following estimate  \cite{Anker92}: ~ there exists constants $a, b > 0$ such that
\begin{equation}
\label{eq:c-function estimate}
|\mathbf{c}(\lambda)|^{-2} \leq ~a(\|\lambda\|+1)^b, \mbox{ ~for all~} \lambda \in \mathfrak a^*.
\end{equation}
 For $f \in \mathcal D(X)$, the Helgason Fourier transform (HFT) $\mathcal F f$ is a function on $\mathfrak a^*_\C \times K/M$ and it is defined by ( \cite{Helgason-gas}, Ch. III, $\S$ 1)
\begin{equation}
\label{eq:HFT definition}
\mathcal F f(\lambda , kM)= \int_G f(x) ~e^{(i \lambda - \rho)( H(x, kM))} dx
\end{equation}
where the function $ H: G \times K/M \mapsto \mathfrak a$ is given by $ H(x, kM)= \mc H(x^{-1}k)$. For the sake of simplicity we fix the notational convention $\mc Ff(\lambda, kM) = \mc Ff(\lambda, k)$. We should note that, for a bi-$K$-invariant function  (that is a left-$K$-invariant function) $g$ on $X$,
the HFT reduces to the spherical transform: $\mc Fg(\lambda, k)= \mc Fg(\lambda, e)=\mc Sg(\lambda)$.\\ The inversion formula for HFT \cite[Ch.-III, Theorem 1.3]{Helgason-gas} for $f \in \mathcal D(X)$ is as follows:
\begin{equation}
\label{eq:HFT inversion}
f(x)= \frac{1}{|W|} \int_{\mathfrak a^*} \int_K \mathcal F f(\lambda , k) ~e^{-(i \lambda +\rho)(\mc H(x^{-1}k))} |\mathbf{c}(\lambda)|^{-2} d \lambda~dk.
\end{equation}
Let $\delta$ be a unitary irreducible representation of K realized on a finite dimensional vector space $V_\delta$ with an inner product $\langle \cdot, \cdot \rangle$.  Let us denote $dim V_\delta = d_\delta$. We denote by $\what{K} $  the set of equivalence classes of unitary irreducible representations of $K$ and by customary abuse of notation regard each element of $\what{K}$ as a representation from its equivalence class. For each $\delta \in \what{K}$, let $\chi_\delta$ stand for
the character of the representation $\delta $ and  $ V_\delta^M= \{ v \in V_\delta ~| ~\delta(m)v = v \mbox{~for all~} m \in M\} $ is
the subspace of $V_\delta$ fixed under $\delta|_M$. Let $\widehat{K}_M$ stands for the subset of $\what{K}$ consisting of $\delta$ for which $V_\delta^M \neq \{0\}$ and we will mostly be interested in representations $\delta \in
\widehat{K}_M$. We set an orthogonal basis $\{v_j \}_{1 \leq j \leq d_\delta}$ of $V_\delta$ and we assume that  $\{v_1, \cdots v_{\ell_\delta}\} $ generates $V_\delta^M$ where $dim V_\delta^M = \ell_\delta$.

   We also define a norm for each unitary irreducible representation of $K$. Let $\Theta$ be the restriction of the Cartan-Killing form $\mf B$ to $\mathfrak k \times \mathfrak k$. Let $\mc K_1,...,\mc K_r$ be a basis for
$\mathfrak k$ over $\R$ orthonormal with respect to $\Theta$. Let $\omega_{\mathfrak k} = -(\mc K_1^2+...+\mc K_r^2)$
be the Casimir element of $K$. Clearly $\omega_{\mathfrak k}$ is a differential operator which commutes with both left and right
translations of $K$. Thus $\delta(\omega_{\mathfrak k})$ commutes with $\delta (k)$ for all $k \in K$. Hence by Schur's lemma  \cite[Ch.I, Theorem 2.1]{Sugiura-book}: $\delta(\omega_{\mathfrak k}) = c(\delta) \delta(e)$ where $c(\delta) \in
\C$. As $\delta(\mc K_i)$ $(1 \leq i \leq r)$ are skew-adjoint operators, $c(\delta)$ is real and $c(\delta) \geq 0$. We define
$|\delta|^2 = c(\delta)$, for $\delta \in \widehat{K}_M$. As, $\delta \in \widehat{K}_M$, $\delta(k)$ is a unitary matrix of
order $d_\delta \times d_\delta $. So  $\|\delta(k)\|_{\mathbf{2}} = \sqrt{d_\delta}$ where $\|\cdot\|_{\mathbf{2}}$ denotes the Hilbert Schmidt norm. Also, from Weyl's dimension formula we can choose an $r \in \mathbb Z^+$ and a positive constant $c$ independent of $\delta$ such that $\|\delta (k)\|_{\mathbf{2}} \leq c(1+|\delta|)^r$ for all $k \in K$. Thus,
$d_\delta  \leq c' (1+|\delta|)^{2r}$ with  $c'>0$  independent of  $\delta$.\\
For any $f \in \mc C^\infty(X)$ we put:
\begin{equation}
\label{matrix valud delta projection}
f^\delta(x) = d_\delta \int_K f(kx) \delta(k^{-1}) dk.
\end{equation}
Clearly, $f^\delta$ is a $\mc C^\infty$ map from $X$ to $Hom(V_\delta, V_\delta)$ satisfying
\begin{equation}
\label{eq:pri:property of f delta}
f^\delta(kx) = \delta(k) f^\delta(x), ~~\mbox{for all~} x \in X, k \in K.
\end{equation}
Any function satisfying the property (\ref{eq:pri:property of f delta}) will be referred to as (a $d_\delta\times d_\delta$ matrix valued) left $\delta$-type function. For any function space $\mc E(X) \subseteq \mc C^\infty(X)$, we write $\mc E_\delta(X)= \{f^\delta~|~f \in \mc E(X) \}$.  We shall denote by $\check{\delta}$ the contragradient representation of the representation $\delta \in \what{K}_M$.
and a function $f$ will be called a scalar valued left $\check{\delta}$-type function if $ f \equiv d_\delta \chi_\delta \ast f$, where the operation $\ast$ is the convolution over $K$. For any class of scalar valued  functions $\mc G(X)$ we shall denote
$$\mc G(\check{\delta}, X) = \{g \in \mc G(X)~|~g \equiv d_\delta \chi_\delta \ast g \}.$$
 The following theorem, due to Helgason,  identifies the two classes $\mc D_\delta(X)$ and $\mc D(\check{\delta}, X)$ corresponding to each $\delta \in \what{K}_M $.
\begin{Proposition}
\label{left delta left delta check identification}
\emph{[Helgason \cite[Ch.III, Proposition 5.10]{Helgason-gas}]}\\
The map $\mc Q: f \mapsto g$,  $g(x)=tr\lt(f(x)\rt)$  $(x \in X)$ is a homeomorphism from $\mc D_\delta(X)$ onto $\mc D(\check{\delta}, X)$ and its inverse is given by $g \mapsto f=g^\delta$.
\end{Proposition}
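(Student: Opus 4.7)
The plan is to verify the proposition by establishing four things in order: (a) $\mc Q$ maps $\mc D_\delta(X)$ into $\mc D(\check{\delta}, X)$; (b) the candidate inverse $g \mapsto g^\delta$ maps $\mc D(\check{\delta}, X)$ into $\mc D_\delta(X)$; (c) the two maps are mutual inverses; and (d) both are continuous in the natural inductive-limit topology on compactly supported smooth functions. The two algebraic identities that do all the work are the character idempotence $\chi_\delta \ast \chi_\delta = d_\delta^{-1}\chi_\delta$ on the compact group $K$ and the Schur orthogonality relations for the matrix coefficients $\delta_{ij}(k)$.

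For (a), I would start with $f = F^\delta$ for some $F \in \mc D(X)$, set $g(x) = tr\, f(x)$, and expand using (\ref{matrix valud delta projection}) to get $g(x) = d_\delta \int_K F(kx)\,\chi_\delta(k^{-1})\,dk$. After the change of variable $k \mapsto k^{-1}$, this reads $g = d_\delta\,(\chi_\delta \ast F)$ in the convention $(\phi\ast F)(x) = \int_K \phi(k)F(k^{-1}x)\,dk$. Convolving once more on the left by $d_\delta \chi_\delta$ and invoking character idempotence then gives $d_\delta \chi_\delta \ast g = d_\delta^2(\chi_\delta\ast\chi_\delta)\ast F = d_\delta(\chi_\delta\ast F) = g$, so $g$ satisfies the defining relation of $\mc D(\check{\delta}, X)$. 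Step (b) is essentially automatic, because $g^\delta$ is manifestly left $\delta$-type by (\ref{eq:pri:property of f delta}) and inherits compact support and smoothness directly from $g$.

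The content lies in (c). For $\mc Q(g^\delta) = g$, a direct trace computation gives $tr(g^\delta)(x) = d_\delta \int_K g(kx)\,\chi_\delta(k^{-1})\,dk = d_\delta(\chi_\delta\ast g)(x)$, which returns $g(x)$ by the defining property of $\mc D(\check{\delta},X)$. The identity $(tr\, f)^\delta = f$ for $f \in \mc D_\delta(X)$ is where the matrix coefficients enter: substituting $f(kx) = \delta(k)f(x)$ under the trace yields $(tr\, f)^\delta_{pq}(x) = d_\delta \sum_{i,j} f_{ji}(x)\int_K \delta_{ij}(k)\,\overline{\delta_{qp}(k)}\,dk$ in the chosen orthonormal basis $\{v_j\}$, and Schur orthogonality $\int_K \delta_{ij}(k)\overline{\delta_{qp}(k)}\,dk = d_\delta^{-1}\delta_{iq}\delta_{jp}$ collapses the double sum exactly to $f_{pq}(x)$.

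Finally, continuity of both $\mc Q$ and $\mc Q^{-1}$ in the inductive-limit topology follows because both operations amount to integration of smooth functions against bounded kernels on the compact group $K$: they preserve supports and commute with right translation by $G$, hence with every differential operator in $\mc U(\mf g)$ acting on the right, so each seminorm defining the topology of $\mc D(X)$ is controlled on both sides. The one spot where attention is actually needed is the bookkeeping that distinguishes $\delta$ from $\check{\delta}$ and tracks the substitution $k\mapsto k^{-1}$ converting $\chi_\delta(k^{-1})$ into $\chi_\delta(k)$; beyond this I expect the verification to be entirely routine.
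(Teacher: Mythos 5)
Your proof is correct. Note that the paper itself gives no argument here — it quotes this result verbatim from Helgason (GASS, Ch.~III, Proposition 5.10) — and your verification via the character idempotence $\chi_\delta \ast \chi_\delta = d_\delta^{-1}\chi_\delta$ for one composite and Schur orthogonality of the matrix coefficients for the other is exactly the standard (and Helgason's own) argument; the only pedantic caveat is that the Schur relations in the form you use require the basis $\{v_j\}$ of $V_\delta$ to be orthonormal rather than merely orthogonal, which is a harmless normalization.
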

\begin{Remark}
\label{rem:pri:topology of D delta X}
For each $\delta \in \what{K}_M$, the space $\mc D(X, Hom(V_\delta, V_\delta))$ of $\mc C^\infty$ functions on $X$ taking values in $Hom(V_\delta, V_\delta)$, carries the inductive limit topology of the Fr\'{e}chet spaces
$$\mc D^R(X, Hom(V_\delta, V_\delta))= \{F\in \mc D(X, Hom(V_\delta, V_\delta))~|~suppF \subseteq \overline{B^R(0)} \},$$ for $R= 0, 1,2, \cdots.$ As $\mc D(\check{\delta}, X) \subset \mc D(X)$, so the natural topology of $\mc D(\check{\delta}, X)$ is the inherited subspace topology.
\end{Remark}
A consequence of the  Peter-Weyl theorem can be stated  \cite[Ch.IV, Corollary 3.4]{Helgason-gga} in the form that  any $f \in \mc C^\infty(X)$ has the decomposition
\begin{equation}
\label{eq:pri:Peter-Weyl decomposition}
f(x)= \sum_{\delta \in \what{K}_M} tr(f^\delta(x)).
\end{equation}
A function $f \in \mc C^\infty(X)$ is said to be `left-$K$ finite' if there exists a finite subset $\Gamma(f) \subset \what{K}_M$ (depending on the function $f$)
such that  $tr(f^\gamma(\cdot)) \equiv 0$ for all $\gamma \in \what{K}_M \setminus \Gamma(f) $. For any class $\mf H(X) \subseteq \mc C^\infty(X) $ of function we shall denote $\mf H(X)_K$ for its left $K$ finite subclass. Let $\Gamma$ be a fixed subset (finite or infinite) of $\what{K}_M$. Then we shall use the notation $\mf H_\Gamma(X)$ for the  subclass of $\mf H(X)$
\begin{equation}
\label{eq:pri:finite left K type defn}
\mf H_\Gamma(X)= \{g \in \mf H(X)~|~ g^\delta(\cdot) \equiv 0, \mbox{~for all~} \delta \in \what{K}_M \setminus \Gamma \}.
\end{equation}
For each $f \in \mathcal D(X)$ and $\delta \in \widehat{K}_M$, we define the $\delta$ projection of it's HFT $\mathcal F f$ as follows:
\begin{equation}
\label{eq:delta projec of HFT}
\left(\mathcal F f \right)^\delta (\lambda, k)= d_\delta \int_K \mathcal F f(\lambda, k_1 k ) \delta(k_1^{-1}) dk_1, ~~\mbox{where}~ \lambda \in \mathfrak a^*_\C ~\mbox{and}~ k \in K.
\end{equation}
The HFT $\mathcal F( f^\delta)$ of $f^\delta$ is also defined by the formula (\ref{eq:HFT definition}), in this case the integration is taken over each matrix entry.
\begin{Lemma}
\label{Lem:2}
  For each $f \in \mathcal D(X)$ and $\delta \in \widehat{K}_M$ the following are true
\begin{enumerate}
   \item $\left( \mathcal F f \right)^\delta (\lambda, k) = \delta(k) \left(\mathcal F f \right)^\delta (\lambda, e),$
  \item $\mathcal F(f^\delta)(\lambda, k)= \left(\mathcal F f \right)^\delta (\lambda, k), \mbox{~for all}~\lambda \in \mathfrak a^*_\C~\mbox{and}~k \in K.$
\end{enumerate}
\end{Lemma}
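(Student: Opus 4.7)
Both statements are direct unfoldings of the definitions, relying only on the left $K$-invariance of the Haar measure on $G$ and a change of variable on the compact group $K$. There is no serious obstacle; the only thing that needs attention is keeping track of how the kernel $e^{(i\lambda-\rho)H(x,k)} = e^{(i\lambda-\rho)\mc H(x^{-1}k)}$ transforms under left translation of $x$.

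For part (i), I would start from the definition
\[
(\mc F f)^\delta(\lambda,k) \;=\; d_\delta \int_K \mc F f(\lambda, k_1 k)\,\delta(k_1^{-1})\,dk_1
\]
and perform the substitution $k_1 \mapsto k_1 k^{-1}$ on the compact group $K$, which is measure-preserving. Then $\delta(k_1^{-1})$ becomes $\delta(k\, k_1^{-1}) = \delta(k)\,\delta(k_1^{-1})$, and $\mc F f(\lambda,k_1 k)$ becomes $\mc F f(\lambda,k_1)$. Pulling the constant matrix $\delta(k)$ outside the integral gives precisely $\delta(k)\,(\mc F f)^\delta(\lambda,e)$.

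For part (ii), I would substitute the definition \eqref{matrix valud delta projection} of $f^\delta$ into the HFT \eqref{eq:HFT definition} (understood entrywise), obtaining
\[
\mc F(f^\delta)(\lambda,k) \;=\; d_\delta \int_K \delta(k_1^{-1}) \left( \int_G f(k_1 x)\, e^{(i\lambda-\rho)\,\mc H(x^{-1}k)}\,dx \right) dk_1,
\]
where Fubini is justified because $f$ has compact support and the kernel is bounded on compacta. In the inner integral I substitute $y = k_1 x$; by the left invariance of $dg$ this change is measure-preserving, and $x^{-1}k = y^{-1}k_1 k$, so $\mc H(x^{-1}k) = \mc H(y^{-1}k_1 k) = H(y,k_1 k)$. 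The inner integral therefore equals $\mc F f(\lambda, k_1 k)$, and the outer integral becomes exactly the definition \eqref{eq:delta projec of HFT} of $(\mc F f)^\delta(\lambda,k)$. This establishes (ii).

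The main subtlety worth double-checking is the identity $\mc H(x^{-1}k) = H(x,kM)$ together with the composition law $H(k_1^{-1}y, kM) = H(y, k_1 k M)$; once this is observed, both parts are one-line computations. Since $f \in \mc D(X)$ the interchanges of integration used above are routine, so no further regularity issue arises.
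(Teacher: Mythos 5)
Your proof is correct and follows essentially the same route as the paper: part (i) by a change of variable on $K$ using the definition of the $\delta$-projection, and part (ii) by substituting the definition of $f^\delta$ into the HFT, applying Fubini, and using the left invariance of $dg$ together with $\mc H(x^{-1}k)=\mc H(y^{-1}k_1k)$ for $y=k_1x$. The paper merely states these steps more tersely; your write-up supplies the same details explicitly.
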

\begin{proof}
 Part (i) of the Lemma follows trivially from (\ref{eq:delta projec of HFT}). Part (ii) can be deduced from the following
\begin{eqnarray} \label{euarray:1}
\mathcal F (f^\delta) (\lambda , kM) &=& \int_X f^\delta (x) e^{(i
\lambda - 1) \mc H(x^{-1}k)} dx , \nonumber\\
&=& d_\delta  \int_X \left\{\int_K f(k_1 x) \delta(k_1^{-1}) dk_1\right\}
e^{(i
\lambda - 1) \mc H(x^{-1}k)} dx
\end{eqnarray}
Now the desired result follows from (\ref{euarray:1}) by a simple application of the Fubini's theorem.
\end{proof}
%%%%%%%%%%%%%%%%%%%%%%%%%%%%%%%%%%%%%%%%%%%%%%%%%%%%%%%%%%%%%%%%%%%%%%%%%%%%%%%%%%%%%%%%%%%%%%%%%%%%%%%%%%%%%%%%%%%%%%%%%%%%%%%
 \section{\textbf{ The $\delta$-spherical transform}}
 \label{sec:delta spherical transofrm}
\setcounter{equation}{0}
Let us now define the `$\delta$-spherical transform' on $\mc D_\delta(X)$. Most of the basic analysis was done by Helgason \cite{Helgason-gas} on $\mathcal D({\check{\delta}}, X)$, we shall follow those results closely and prove them on $\mc D_\delta(X)$ using the homeomorphism $\mc Q$, defined in Proposition \ref{left delta left delta check identification}.
\begin{Definition}
\label{Def:delta spherical transform}
  For $f \in \mc D_\delta(X)$ the $\delta$-spherical transform $\widetilde{f}$ is an  operator valued function on $\mathfrak a^*_\C$ and is given by
\begin{equation}
\label{eq:def. delta sp transform}
\widetilde{f}(\lambda)= d_\delta \int_G trf(x) \Phi_{\overline{\lambda}, \delta}^*(x) dx
\end{equation}
where for each $\delta \in \what{K}_M$ and $\lambda \in \mf a^*_\C$, the function
\begin{equation}
\label{eq:pri:gen. sp. funct.}
\Phi_{\lambda, \delta}(x) = \int_K e^{-(i \lambda+ 1)H(x^{-1}k)} \delta(k) dk, \hspace{.2in}  x\in G,
\end{equation}
is called the `generalized spherical function' of class $\delta$. For each $x \in G$, $\Phi_{\lambda, \delta}(x)$  is an operator in $Hom(V_\delta, V_\delta)$. Taking point-wise adjoint leads to the expression
\begin{equation}
\label{eq:pri:adjoint of gen. sp. funct.}
\Phi_{\overline{\lambda}, \delta}^*(x):=\Phi_{\overline{\lambda}, \delta}^*(x) = \int_K e^{(i \lambda -1)H(x^{-1}k)} \delta(k^{-1}) dk, \hspace{.2in}  x\in G.
\end{equation}
\end{Definition}
\begin{Remark}
\label{rem:pri:gen. sp. funct.as funct.on X}
From the  Iwasawa decomposition, if $x \in G$ and $\tau \in K$, $\mc H(\tau x)= \mc H(x)$. Hence, the expressions (\ref{eq:pri:gen. sp. funct.}) and (\ref{eq:pri:adjoint of gen. sp. funct.}) show that both $\Phi_{\lambda, \delta}$ and $\Phi_{\overline{\lambda}, \delta}^*$ can be considered as functions on the  space $X=G/K$.
\end{Remark}
In the following proposition we list out some basic properties of the generalized spherical functions that we will be using.
\begin{Proposition}
\label{prop:pri:basic properties of gen, sp. funct.}
\begin{enumerate}
\item For each $x \in X$, the function $\lambda \mapsto \Phi_{\lambda, \delta}(x)$ is holomorphic on $\mf a^*_\C$.
\item Let $\delta \in \what{K}_M$ and $\lambda \in \mf a^*_\C$. Then for each $x \in X$ and $k \in K$ we have
\begin{equation}
\label{eq:pri:behaviour of gen. sp. funct. under K action}
\Phi_{{\lambda}, \delta}(kx) =
\delta(k) \Phi_{{\lambda}, \delta}(x) \mbox{~~and~~} \Phi_{\overline{\lambda}, \delta}^*(kx) = \Phi_{\overline{\lambda}, \delta}^*(x)
\delta(k^{-1}).
\end{equation}
Let $v \in V_\delta$ and $ m \in M$ then
\begin{equation}
\label{eq:pri:M fixedness of adj. of gen. sp. funct.}
\delta(m)
\left( \Phi_{\overline{\lambda}, \delta}^*(x) v\right)=
\Phi_{\overline{\lambda}, \delta}^*(x) v.
\end{equation}
\item \emph{[Helgason \cite[Ch.III, Theorem 5.15 ]{Helgason-gas}]}
For each $\delta \in \what{K}_M$, $\omega \in W$ and for all $\lambda \in \mf a^*_\C$, the restrictions $\Phi_{\lambda, \delta}|_A$ and $\Phi_{\overline{\lambda}, \delta}^*|_A$  satisfy the relations
\begin{align}
\label{W action on Phi|A}
\Phi_{\lambda, \delta}|_A Q^\delta(\lambda)&= \Phi_{\omega\lambda, \delta}|_A Q^\delta(\omega\lambda),\\
\label{W action on Phi|A*}
Q^\delta(\lambda)^{-1} \Phi_{\overline{\lambda},\delta}^*|_{A} &= Q^\delta(\omega\lambda)^{-1} \Phi_{-\overline{\omega\lambda}, \delta}^*|_A
\end{align}
 where $Q^\delta(\lambda)$ is a $(\ell_\delta \times \ell_\delta)$ matrix whose entries are certain constant coefficient polynomials in  $\lambda \in \mf a^*_\C$ (see \cite[Ch. III, \S2]{Helgason-gas} for details ). Furthermore, both sides of (\ref{W action on Phi|A*}) are holomorphic for all $\lambda \in \C$, implying that  $\Phi_{\overline{\lambda}, \delta}^*|_A$ is divisible by $Q^\delta(\lambda)$ in the ring of entire functions.
\item For each fixed $\lambda$ and $\delta$, the function
$\Phi_{{\lambda}, \delta}(x)$ and its adjoint  are both joint eigenfunctions of all $G$-invariant differential operators of $X$. Particularly, for the Laplace-Beltrami operator  $\mathbf{L}$, the eigenvalues are as follows:
\begin{equation}
\label{eq:pri:gen.sp. funct. as eigen funct. of L}
\lt( \mathbf{L} \Phi_{\lambda, \delta}\rt)(x) = - \lt(\langle \lambda, \lambda \rangle_1 + \|\rho\|^2 \rt) \Phi_{\lambda, \delta}(x), ~~ x \in X.
\end{equation}
\item For each $\delta \in \what{K}_M$, the generalized spherical function corresponding to $\delta$ is related with the elementary spherical function by the following differential equation \cite[Ch.III, \S5, Corollary 5.17]{Helgason-gas}
    \begin{equation}
    \label{relation of gen sp fn and ele sp fn}
    \Phi_{\lambda, \delta}(gK)|_{V_\delta^M} = \left(\mathbf{D}^\delta \varphi_\lambda \right)(g) Q^\delta(\lambda)^{-1}
    \end{equation}
    where $\mathbf{D}^\delta$ is a differential operator matrix of order $(d_\delta \times \ell_\delta)$. Individual matrix entries of $\mathbf{D}^\delta$ are certain constant coefficient differential operators on $G$.
\item For any $\textbf{g}_1,\textbf{g}_2  \in \mc U(\mf g_{\C}) $ there exist constants $c=c(\textbf{g}_1,\textbf{g}_2)$, $b=b(\textbf{g}_1,\textbf{g}_2) $ and $ c_0>0 $ so that
\begin{equation}
\label{eq:pri:estimate of gen sp funct}
\| \Phi_{\lambda,\delta}(\textbf{g}_1,x,\textbf{g}_2) \|_{\mathbf{2}}
 \leq c (1+|\delta|)^b (1 + \|\lambda\|)^b \varphi_{0}(x) e^{c_0 \|\Im{\lambda}\|(1+ |x|)},
\end{equation}
for all $x \in X$ and $\lambda \in \mf a^*_\C$.
 \end{enumerate}
\end{Proposition}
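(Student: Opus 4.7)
The plan is to dispose of the easy assertions first and focus the real work on (vi). Parts (iii) and (v) are quoted directly from Helgason, so they require no fresh argument. For (i), since $K$ is compact and the integrand $k \mapsto e^{-(i\lambda+\rho)\mc H(x^{-1}k)}\delta(k)$ is continuous on $K$ and entire in $\lambda$ for each fixed $k$, a routine Morera/Fubini argument transfers holomorphicity to the $K$-integral $\Phi_{\lambda,\delta}(x)$. For (ii), the first identity in (\ref{eq:pri:behaviour of gen. sp. funct. under K action}) follows from the left $K$-invariance $\mc H(k_0 g) = \mc H(g)$ of the Iwasawa projection together with the substitution $k \mapsto k_0 k$; taking pointwise adjoints yields the companion identity. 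The $M$-fixedness (\ref{eq:pri:M fixedness of adj. of gen. sp. funct.}) is handled by the same device using instead the identity $\mc H(gm) = \mc H(g)$ for $m \in M$ (valid because $M$ centralizes $A$ and normalizes $N$) with the substitution $k \mapsto km$ in (\ref{eq:pri:adjoint of gen. sp. funct.}).

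For (iv), I will invoke the classical fact that for each fixed $k \in K$ the function $x \mapsto e^{-(i\lambda+\rho)\mc H(x^{-1}k)}$ is a joint eigenfunction of every $G$-invariant differential operator on $X$, with eigenvalue independent of $k$; in particular for $\mathbf{L}$ that eigenvalue is $-(\langle\lambda,\lambda\rangle_1 + \|\rho\|^2)$. Differentiating under the $K$-integral in (\ref{eq:pri:gen. sp. funct.}) then propagates the eigenequation to $\Phi_{\lambda,\delta}$, entry by entry.

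The bulk of the work is (vi), and this is where the main obstacle lies. My approach is to differentiate under the integral in (\ref{eq:pri:gen. sp. funct.}) and to split the left action of $\mathbf{g}_1$ via $\mf g_\C = \mf k_\C + \mf s_\C$. On the $\mf k$-side, the equivariance $\Phi_{\lambda,\delta}(kx) = \delta(k)\Phi_{\lambda,\delta}(x)$ from (ii) converts $\mc U(\mf k)$-derivatives into left multiplication by factors $d\delta(\cdot)$; using the Casimir identity $\delta(\omega_{\mf k}) = |\delta|^2 \delta(e)$ together with $\sqrt{d_\delta} \leq c(1+|\delta|)^r$, the Hilbert--Schmidt norm of these factors is dominated by $c(1+|\delta|)^{r+\deg\mathbf{g}_1}$. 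On the $\mf s$-side, and likewise for the entire right action of $\mathbf{g}_2$, each derivative falls on the exponential $e^{-(i\lambda+\rho)\mc H(x^{-1}k)}$ via the chain rule, pulling down a polynomial in $\lambda$ of total degree at most $\deg\mathbf{g}_1 + \deg\mathbf{g}_2$ times smooth coefficients bounded uniformly on $K$. The resulting integrand is then pointwise estimated by
\[
(1+\|\lambda\|)^b\,(1+|\delta|)^b\, e^{\|\Im\lambda\|\,\|\mc H(x^{-1}k)\|}\, e^{-\rho(\mc H(x^{-1}k))}.
\]
Now (\ref{eq:Iwasawa < Cartan}) combined with the right $K$-invariance of the Cartan norm $|\cdot|$ gives $\|\mc H(x^{-1}k)\| \leq c_0(1+|x|)$, while $\int_K e^{-\rho(\mc H(x^{-1}k))}\,dk = \varphi_0(x)$ by the very definition of $\varphi_0$. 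Assembling these facts yields (\ref{eq:pri:estimate of gen sp funct}). The technical difficulty is the book-keeping for the $\mf s$-derivatives: one must verify that iterated chain-rule derivatives of $\mc H(x^{-1}k)$ in $x$ contribute only polynomial growth in $\lambda$ (from differentiating the exponent) and bounded smooth coefficients on $K$, so as not to disturb the sharp $\varphi_0(x)$ decay on which the rest of the paper depends.
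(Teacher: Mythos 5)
For parts (i)--(v) your treatment is essentially the paper's: the $K$-equivariance and the $M$-fixedness in (ii) are proved by exactly the same substitution argument (the paper changes $mk^{-1}$ to $k'^{-1}$ and uses that $M$ centralizes $A$ and normalizes $N$ to get $\mc H(x^{-1}k'm)=\mc H(x^{-1}k')$, which is your $\mc H(gm)=\mc H(g)$ in disguise), while (iii), (iv) and (v) are simply referred to Helgason, and (i) is the routine Morera/Fubini observation. No issue there.

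The divergence is in (vi), and that is where your proposal has a genuine gap. The paper does not prove (vi) at all --- it cites Arthur --- whereas you attempt a direct proof, and the step you describe as ``book-keeping'' is in fact the entire analytic content of the estimate. Two things go wrong in the sketch as written. First, the left action of $Y\in\mf s$ at the point $x$ requires differentiating $t\mapsto \mc H(x^{-1}\exp(-tY)k)$, a derivative taken \emph{in the middle} of the argument; rewriting it as a right-invariant derivative of $\mc H$ at the point $x^{-1}k$ forces you to move $\exp(-tY)$ past $x^{-1}$, i.e.\ to differentiate in the direction $\mathrm{Ad}(x^{-1})Y$, whose norm grows like $e^{c|x|}$ a priori. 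Showing that these contributions nevertheless stay bounded (so that the sharp factor $\varphi_0(x)$, with no extra exponential loss, survives) is precisely Harish-Chandra's hard lemma on the uniform boundedness of all derivatives of the Iwasawa projection, which you assert but do not prove. Second, your PBW splitting $\mc U(\mf g_\C)=\mc U(\mf k_\C)\cdot\mc U(\mf s_\C)$ generates commutator terms that mix the two cases, so the clean dichotomy ``$\mf k$-derivatives give $d\delta$ factors, $\mf s$-derivatives fall on the exponential'' needs an induction that you have not set up. Since the rest of the paper leans on (vi) with the exact exponent structure $\varphi_0(x)e^{c_0\|\Im\lambda\|(1+|x|)}$, you should either carry out that uniform-derivative lemma in full or, as the author does, cite Arthur's estimate and not claim an independent proof.
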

\begin{proof}
Property (\ref{eq:pri:behaviour of gen. sp. funct. under K action}) follows trivially from the definition of the generalized spherical function.
(\ref{eq:pri:M fixedness of adj. of gen. sp. funct.})  also follows from (\ref{eq:pri:gen. sp. funct.}) as below:
\begin{align}
\delta(m)\lt(\Phi_{\overline{\lambda}, \delta}^*(x) v \rt)&= \lt\{\int_K e^{(i \lambda- 1)\mc H(x^{-1}k)} \delta(mk^{-1}) dk \rt\}v\nonumber\\
& = \lt\{ \int_K e^{(i \lambda- 1) \mc H(x^{-1}k'm) } \delta(k'^{-1}) dk'\rt\} v. \nonumber
\end{align}
The last line follows by a simple change of variable $mk^{-1}$ to $k'^{-1}$. In the last expression above, let $x^{-1}k' = \mc K(x^{-1}k') (\exp{\mc H(x^{-1}k')}) n'$ for some $n' \in N$. As $M$ normalizes $N$ and centralizes $A$ we have $$x^{-1}k'm= \mc K(x^{-1}k')m (\exp{\mc H(x^{-1}k')}) \mc N(x^{-1}k').$$ This shows that $\mc H(x^{-1}k')= \mc H(x^{-1}k'm)$. Thus
\begin{align}
\delta(m)\lt(\Phi_{\overline{\lambda}, \delta}^*(x) v \rt)&=\lt\{ \int_K e^{(i \lambda- 1)\mc H(x^{-1}k') } \delta(k'^{-1}) dk'\rt\} v = \Phi_{\overline{\lambda}, \delta}^*(x) v.\nonumber
\end{align}
A proof of property (ii) may be found in
\cite[Ch.III, \S1 (6)]{Helgason-gas} and \cite[Ch.II, Corollary 5.20]{Helgason-gga}.
The estimate (\ref{eq:pri:estimate of gen sp funct}) is a work of Arthur \cite{Arthur79}.
\end{proof}
\begin{Remark}
\label{rem:pri:matrix of Phi lambda delta}
The property (\ref{eq:pri:M fixedness of adj. of gen. sp. funct.})
clearly shows that for each $x \in X$ the operator $\Phi_{\overline{\lambda},\delta}^*(x)$ maps $V_\delta$ to $V_\delta^M$. Hence  $\Phi_{\overline{\lambda},
 \delta}^*(x)$ is a $d_\delta \times d_\delta$ matrix whose only the first $\ell_\delta$ rows can nonzero. Consequently, for each $x \in X $, $\Phi_{\lambda, \delta}(x)$ is a $d_\delta \times d_\delta$ matrix of which only the first $\ell_\delta$ columns
 can be nonzero. In other words, the operator $\Phi_{\lambda, \delta}(x)$ vanishes identically on the orthogonal complement of the subspace $V_\delta^M$.
\end{Remark}
\begin{Remark}
\label{rem: remark about the zeros of Kostant matrix}
 In the case of the rank-one symmetric spaces the Kostant matrix $Q^\delta$ reduces to a constant coefficient polynomial (Kostant polynomial) \cite[Theorem 11.2,S11, Ch.III]{Helgason-gas}. The degree of the Kostant polynomials depends on the choice of $\delta \in \what{K}_M $. Furthermore all the zeros of the Kostant polynomials lie on the open lower half of the imaginary axis. \\
The general rank analogue of the above result states that $\det Q^\delta(\lambda) \neq 0$ for all $\lambda \in \mf a^* + i \overline{\mf a^{*+}}$. This is an easy consequence of Lemma 2.11 and Proposition 4.1 of \cite[Ch.III]{Helgason-gas}.
\end{Remark}
\begin{Lemma}
\label{Lem:Connection betwn HFT and delta sp transform}
If $ f \in \mc D_\delta(X)$, where $\delta \in \widehat{K}_M$, then $\mathcal F f (\lambda, e) = \widetilde{f}(\lambda)$ for all $\lambda \in \mathfrak a^*_\C$.
\end{Lemma}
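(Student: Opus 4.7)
The plan is to unpack both sides of the claimed identity and reduce to Lemma~\ref{Lem:2}(ii), which has already identified the HFT of a $\delta$-projection.

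First, since $f \in \mc D_\delta(X)$, Proposition~\ref{left delta left delta check identification} produces a scalar-valued $g \in \mc D(\check{\delta}, X)$, namely $g(x) = tr(f(x))$, such that $f = g^\delta$. Applying Lemma~\ref{Lem:2}(ii), the matrix-valued Helgason transform satisfies $\mc F f(\lambda, k) = (\mc F g)^\delta(\lambda, k)$. Specializing to $k = e$ and using (\ref{eq:delta projec of HFT}),
\begin{equation*}
\mc F f(\lambda, e) \;=\; d_\delta \int_K \mc F g(\lambda, k_1)\, \delta(k_1^{-1})\, dk_1.
\end{equation*}

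Next I would substitute the integral formula (\ref{eq:HFT definition}) for $\mc F g$ and interchange the order of integration by Fubini (permitted since $g$ has compact support and the integrand is jointly continuous). This yields
\begin{equation*}
\mc F f(\lambda, e) \;=\; d_\delta \int_G g(x) \left\{\int_K e^{(i\lambda - \rho)\mc H(x^{-1}k_1)}\, \delta(k_1^{-1})\, dk_1\right\} dx,
\end{equation*}
and the inner integral is exactly $\Phi_{\overline{\lambda},\delta}^*(x)$ by (\ref{eq:pri:adjoint of gen. sp. funct.}). Replacing $g(x)$ with $tr(f(x))$ produces precisely the right-hand side of Definition~\ref{Def:delta spherical transform}, namely $\widetilde{f}(\lambda)$, and the identification is complete.

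No serious obstacle is anticipated: the argument is a book-keeping calculation built from the representation-theoretic identification $f = (tr\, f)^\delta$ of Proposition~\ref{left delta left delta check identification} together with the already-proved Lemma~\ref{Lem:2}(ii). The only technical point is the Fubini step, which is immediate from the compact support of $g$. An alternative direct route would bypass Lemma~\ref{Lem:2} entirely: start from the defining formula for $\widetilde{f}(\lambda)$, perform a left-$K$ change of variables inside the $G$-integral, and collapse the resulting $K$-integral via Schur orthogonality on the matrix coefficients of $\delta$; this reproduces $\mc F f(\lambda, e)$ directly but amounts to re-deriving the content of Lemma~\ref{Lem:2}(ii) in situ, so the route above is preferable.
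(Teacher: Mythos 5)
Your proposal is correct and is essentially the paper's own argument: the paper substitutes $f(x)=d_\delta\int_K tr f(kx)\,\delta(k^{-1})\,dk$ directly into $\mathcal F f(\lambda,e)$, applies Fubini, and performs the change of variable $kx=y$ to recognize $\Phi^*_{\overline\lambda,\delta}$, which is exactly the computation you outsource to Lemma~\ref{Lem:2}(ii) before unwinding the definitions. The only difference is this repackaging through the already-proved projection identity, which is a harmless (arguably cleaner) reorganization rather than a different route.
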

\begin{proof}
  For any $f \in \mc D_\delta(X) $, using the topological isomorphism $\mc Q$ as described in Proposition \ref{left delta left delta check identification}, we get $tr f(\cdot) \in \mathcal D(\check{\delta}, X)$ and also $f(x)= d_\delta \int_K trf(kx) \delta(k^{-1}) dk$. Now from the definition of HFT (\ref{eq:HFT definition}) we get:
\begin{align}
\label{ali:20}
\mathcal F f(\lambda, e) & = \int_G f(x) e^{(i \lambda - \rho)\mc H(x^{-1}e)} dx,\nonumber\\
& = d_\delta \int_G \int_K trf(kx) \delta(k^{-1}) dk e^{(i \lambda - \rho)\mc H(x^{-1}e)} dx.
\end{align}
A simple application of the Fubini theorem and a substitution $kx = y$ in the integrand of (\ref{ali:20}) gives the following.   \begin{align*}
\mathcal F f(\lambda, e)&= d_\delta \int_G trf(y) \left\{  \int_K e^{(i \lambda - \rho)\mc H(y^{-1}k)} \delta(k^{-1}) dk \right\} dy,\\
&= d_\delta \int_G  trf(y) \Phi_{\overline{\lambda}, \delta}^*(y) dy
= \widetilde{f}(\lambda).
\end{align*}
\end{proof}
\begin{Lemma}
\label{Lem:inversion delta sp. transform}
  Let $f \in \mc D_\delta(X)$, then the inversion formula for the $\delta$-spherical transform (Definition \ref{Def:delta spherical transform}) is given by:
\begin{equation}
\label{eq:inversion formuta delta sp. transform}
f(x)= \frac{1}{|W|} \int_{\mathfrak a^*} \Phi_{\lambda, \delta}(x) \widetilde{f}(\lambda) |\mathbf{c}(\lambda)|^{-2} d \lambda.
\end{equation}
Furthermore we get:
$\int_G {\|f(x)\|_{\mathbf{2}}}^2 dx = \frac{1}{|W|} \int_{\mathfrak a^*} {\|\widetilde{f}(\lambda)\|_{\mathbf{2}}}^2 ~|\mathbf{c}(\lambda)|^{-2} d \lambda.$
\end{Lemma}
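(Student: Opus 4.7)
The plan is to derive the inversion formula as an immediate consequence of the HFT inversion formula (\ref{eq:HFT inversion}) applied to the matrix-valued function $f$, combined with the identification $\mc Ff(\lambda, e) = \widetilde f(\lambda)$ from Lemma \ref{Lem:Connection betwn HFT and delta sp transform}. The Plancherel identity will then fall out by substituting the inversion formula into the Hilbert--Schmidt norm integral.

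The crucial calculation is to determine how $\mc Ff(\lambda, k)$ depends on $k$ for $f \in \mc D_\delta(X)$. Any such $f$ satisfies the left-$\delta$-type law $f(k_0 x) = \delta(k_0) f(x)$ by (\ref{eq:pri:property of f delta}). Performing the left-translation $x \mapsto k y$ in the defining integral for $\mc Ff(\lambda, k)$, one has $(ky)^{-1} k = y^{-1}$, so $\mc H(x^{-1}k) = \mc H(y^{-1})$, and $\delta(k)$ factors out of the integral. Combined with Lemma \ref{Lem:Connection betwn HFT and delta sp transform} this yields
\[
\mc Ff(\lambda, k) \;=\; \delta(k)\,\mc Ff(\lambda, e) \;=\; \delta(k)\,\widetilde f(\lambda).
\]

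Substituting this into (\ref{eq:HFT inversion}) and interchanging the $\lambda$- and $k$-integrations by Fubini (justified because $f$ is compactly supported, hence every matrix entry of $\mc Ff$ is of Paley--Wiener type and the factor $|\mathbf c(\lambda)|^{-2}$ is polynomially bounded by (\ref{eq:c-function estimate})), one obtains
\[
f(x) \;=\; \frac{1}{|W|}\int_{\mf a^*} \left(\int_K \delta(k)\, e^{-(i\lambda + \rho)\mc H(x^{-1}k)}\, dk\right)\widetilde f(\lambda)\,|\mathbf c(\lambda)|^{-2}\,d\lambda.
\]
The inner $K$-integral is exactly $\Phi_{\lambda, \delta}(x)$ by (\ref{eq:pri:gen. sp. funct.}), which gives the inversion formula. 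For the Plancherel identity, I would substitute this inversion into $\int_G \mathrm{tr}(f(x) f(x)^*)\,dx$, interchange integrals, and invoke the auxiliary identity $\int_G \Phi_{\overline\lambda, \delta}^*(x)\,f(x)\,dx = \widetilde f(\lambda)$. This identity is established by writing $f = g^\delta$ with $g = \mathrm{tr}(f) \in \mc D(\check\delta, X)$ via Proposition \ref{left delta left delta check identification}, changing variables $x \mapsto k^{-1} x$, invoking the transformation rule $\Phi_{\overline\lambda, \delta}^*(k^{-1} x) = \Phi_{\overline\lambda, \delta}^*(x)\delta(k)$ from (\ref{eq:pri:behaviour of gen. sp. funct. under K action}), and using $\int_K \delta(k)\delta(k^{-1})\,dk = I$; cyclicity of the trace then completes the derivation.

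The only point requiring genuine care is Step 2, namely the interaction of the left-$K$ transformation law of $f$ with the Iwasawa projection $\mc H(x^{-1}k)$ under the substitution $x \mapsto ky$; everything else reduces to Fubini and the definitions of $\Phi_{\lambda, \delta}$ and $\widetilde f$. No asymptotic analysis of spherical functions or Kostant-matrix inversion is needed at this stage, consistent with the spirit of Anker's approach invoked in the introduction.
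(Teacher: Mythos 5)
Your proposal is correct, and the derivation of the inversion formula itself is essentially identical to the paper's: both substitute $\mathcal F f(\lambda,k)=\delta(k)\,\mathcal F f(\lambda,e)=\delta(k)\widetilde f(\lambda)$ (Lemma \ref{Lem:2}(i) together with Lemma \ref{Lem:Connection betwn HFT and delta sp transform}) into the HFT inversion formula (\ref{eq:HFT inversion}) and recognize the inner $K$-integral as $\Phi_{\lambda,\delta}(x)$. Where you diverge is the Plancherel identity. The paper simply quotes the Plancherel theorem for the HFT, applies it entrywise to get $\int_G\|f(x)\|_{\mathbf{2}}^2\,dx=\frac{1}{|W|}\int_{\mf a^*}\int_K\|\mathcal Ff(\lambda,k)\|_{\mathbf{2}}^2\,|\mathbf{c}(\lambda)|^{-2}\,dk\,d\lambda$, and then uses $\mathcal Ff(\lambda,k)=\delta(k)\widetilde f(\lambda)$ together with unitarity of $\delta(k)$ (invoking Schur orthogonality) to collapse the $K$-integral. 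You instead rederive Plancherel from the inversion formula just proved, via the auxiliary identity $\int_G\Phi_{\overline\lambda,\delta}^*(x)f(x)\,dx=\widetilde f(\lambda)$, which you correctly establish by writing $f=g^\delta$, changing variables, and using $\Phi_{\overline\lambda,\delta}^*(k^{-1}x)=\Phi_{\overline\lambda,\delta}^*(x)\delta(k)$ and $\int_K\delta(k)\delta(k^{-1})\,dk=I$; cyclicity of the trace then yields $\frac{1}{|W|}\int_{\mf a^*}\mathrm{tr}\bigl(\widetilde f(\lambda)\widetilde f(\lambda)^*\bigr)|\mathbf{c}(\lambda)|^{-2}\,d\lambda$. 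Your route is self-contained (it needs only the scalar Plancherel machinery already used to prove HFT inversion, not the HFT Plancherel theorem as a separate input), at the cost of two extra routine verifications you should make explicit: the adjoint relation $\Phi_{\lambda,\delta}(x)^*=\Phi_{\overline\lambda,\delta}^*(x)$ for $\lambda\in\mf a^*$ real, and the absolute convergence justifying the interchange of the $x$- and $\lambda$-integrations (immediate here since $f$ has compact support, $\widetilde f$ is of Paley--Wiener type, and $|\mathbf{c}(\lambda)|^{-2}$ grows polynomially by (\ref{eq:c-function estimate})).
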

\begin{proof}
  The formula (\ref{eq:inversion formuta delta sp. transform}) is derived from the inversion formula (\ref{eq:HFT inversion}) of the HFT.
\begin{align*}
f(x)& = \frac{1}{|W|} \int_{\mathfrak a^*} \int_K \mathcal F f(\lambda, k) e^{-(i \lambda + \rho)(\mc H(x^{-1}k))} |\mathbf{c}(\lambda)|^{-2} dk~d \lambda ,\\
&=  \frac{1}{|W|} \int_{\mathfrak a^*} \int_K \delta(k) \mathcal F f(\lambda, e) e^{-(i \lambda + \rho)(\mc H(x^{-1}k))} |\mathbf{c}(\lambda)|^{-2} dk~d \lambda,\\
&= \frac{1}{|W|} \int_{\mathfrak a^*}  \left\{\int_K  e^{-(i \lambda + \rho)(\mc H(x^{-1}k))} \delta(k) dk \right\} \widetilde{f}(\lambda) |\mathbf{c}(\lambda)|^{-2} d \lambda,\\
& = \frac{1}{|W|} \int_{\mathfrak a^*} \Phi_{\lambda, \delta}(x) \widetilde{f}(\lambda) |\mathbf{c}(\lambda)|^{-2} d \lambda.
\end{align*}
The second and the third line of the above deduction are consequences of Lemma \ref{Lem:2} and
Lemma \ref{Lem:Connection betwn HFT and delta sp transform} respectively.\\
The second  relation of this Lemma also follows from the Plancherel formula of the HFT (\cite{Helgason-gas}, Ch.-III, $\S$1, Theorem 1.5 ). For $f \in \mc D_\delta(X)$, the HFT is defined matrix entry wise, hence in this case the Plancherel formula is given by:
\begin{equation}
\label{eq:op valued plancherel formula for HFT}
\int_G {\|f(x)\|_{\mathbf{2}}}^2 dx = \frac{1}{|W|} \int_{\mathfrak a^*} \int_K {\|\mathcal F f(\lambda, k )\|_{\mathbf{2}}}^2 |\mathbf{c}(\lambda)|^{-2} dk ~d \lambda,
\end{equation}
which follows easily from the classical Plancherel formula given in \cite{Helgason-gas}. The required Plancherel formula for the $\delta$-spherical transform  follows from (\ref{eq:op valued plancherel formula for HFT}) by using relation (i) of Lemma \ref{Lem:2} and Schur's Orthogonality relation (\cite{Sugiura-book}, Theorem 3.2).
\end{proof}
We shall next deduce the deduce a topological Paley-Wiener (P-W) theorem for the $\delta$-spherical transform (\ref{eq:def. delta sp transform}).\\
A holomorphic function $\psi: \mathfrak a^*_\C \longrightarrow Hom(V_\delta, V_\delta^M)$ is said to be of `exponential type-$R$' ($R \in \R^+$) if  $$\sup_{\lambda \in \mathfrak a^*_\C} e^{-R \|\Im \lambda\|} (1+ \|\lambda\|)^N \|\psi(\lambda)\|_{\mathbf{2}} < +\infty \mbox{\hspace{.5in}for each~ }N \in \Z^+.$$
Let $\mathcal H_\delta^R(\mathfrak a^*_\C)$ be the class of all $Hom(V_\delta, V_\delta^M)$ valued exponential type-$R$ functions on $\mf a^*_\C$ and further let $\mathcal H_\delta(\mathfrak a^*_\C) = \bigcup_{R > 0} \mathcal H^R_\delta(\mathfrak a^*_\C)$.
\begin{Theorem}
\label{The:Topological PW Theorem}
%\emph{[ Topological Paley-Wiener Theorem for $K$-types]}\\
  For each fixed $\delta \in \widehat{K}_M$, the $\delta$-spherical transform given by (\ref{eq:def. delta sp transform}) is a homeomorphism between the spaces $\mc D_\delta(X)$ and $\mathcal P^\delta(\mathfrak a^*_\C)$, where
\begin{equation}
\label{eq:PW space}
\mathcal P^\delta(\mathfrak a^*_\C)= \left\{ \xi\in \mathcal H_\delta(\mathfrak a^*_\C)~|~\lambda \mapsto Q^\delta(\lambda)^{-1} \xi(\lambda) ~\mbox{is an}~ W\mbox{-invariant entire function} \right\}.
\end{equation}
Here $Q^\delta(\lambda)$ is the matrix of constant coefficient polynomials appeared in the expressions (\ref{W action on Phi|A}) and (\ref{W action on Phi|A*}).
\end{Theorem}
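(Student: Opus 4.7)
The plan is to reduce the theorem to two ingredients that are either already established in the paper or are classical: first, the explicit factorization of $\Phi_{\lambda,\delta}|_A$ in terms of the elementary spherical function $\varphi_\lambda$ given by (\ref{relation of gen sp fn and ele sp fn}); and second, the classical Helgason--Gangolli Paley--Wiener theorem for $\mc D(G//K)$ under the spherical transform. The argument then splits into three parts: the forward inclusion, the reverse construction via inversion, and continuity.

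\textbf{Forward direction.} Fix $f \in \mc D_\delta(X)$ with $\text{supp}\, f \subset \overline{B^R(0)}$. Holomorphy of $\widetilde f$ on $\mf a^*_\C$ follows by differentiating under the integral in (\ref{eq:def. delta sp transform}), using Proposition \ref{prop:pri:basic properties of gen, sp. funct.}(i) and the compact support. The image lies in $Hom(V_\delta, V_\delta^M)$ by Remark \ref{rem:pri:matrix of Phi lambda delta}. Combining the estimate (\ref{eq:pri:estimate of gen sp funct}) with the basic inequality $|x| \leq c_1 R$ on $\text{supp}\,f$ yields $\|\widetilde f(\lambda)\|_{\mathbf 2} \leq C_f(1+\|\lambda\|)^b e^{c_0 R\|\Im\lambda\|}$. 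To upgrade $(1+\|\lambda\|)^b$ to arbitrary polynomial decay along $\mf a^*$, I would apply powers of the Laplace--Beltrami operator $\mathbf L$ to $f$ before transforming: by the $G$-invariance of $\mathbf L$ and the eigenfunction property (\ref{eq:pri:gen.sp. funct. as eigen funct. of L}), each $\mathbf L$ pulls out a scalar $-(\langle\lambda,\lambda\rangle_1+\|\rho\|^2)$, forcing $\widetilde f \in \mc H_\delta^{c_0 R}(\mf a^*_\C)$. For the Kostant condition I would use the Cartan decomposition (\ref{eq:Haar measure KAK}) and (\ref{eq:pri:behaviour of gen. sp. funct. under K action}) to absorb the two $K$-integrations into $f$, rewriting $\widetilde f(\lambda)$ as an $\mc D(\overline{\mf a^+})$--type integral against $\Phi^*_{\overline\lambda,\delta}|_A$; the $W$-invariance and entire extension of $Q^\delta(\lambda)^{-1}\widetilde f(\lambda)$ then follows directly from (\ref{W action on Phi|A*}).

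\textbf{Reverse direction.} Given $\xi \in \mc P^\delta(\mf a^*_\C)$ of exponential type $R$, define
\[
f(x) = \frac{1}{|W|}\int_{\mf a^*}\Phi_{\lambda,\delta}(x)\,\xi(\lambda)\,|\mathbf c(\lambda)|^{-2}\,d\lambda .
\]
Absolute convergence and smoothness in $x$ follow from (\ref{eq:pri:estimate of gen sp funct}), (\ref{eq:c-function estimate}), and the rapid decay of $\xi$ on $\mf a^*$; the left $\delta$-type identity (\ref{eq:pri:property of f delta}) follows from (\ref{eq:pri:behaviour of gen. sp. funct. under K action}); and the Plancherel identity in Lemma \ref{Lem:inversion delta sp. transform} gives $\widetilde f = \xi$ and injectivity. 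The one nontrivial step is showing $\text{supp}\, f$ is contained in a ball whose radius is controlled by $R$. For this I would invoke the factorization (\ref{relation of gen sp fn and ele sp fn}) combined with Remark \ref{rem:pri:matrix of Phi lambda delta} to rewrite the integrand as $(\mathbf D^\delta\varphi_\lambda)(x)\,\eta(\lambda)$ where $\eta(\lambda):=Q^\delta(\lambda)^{-1}\xi(\lambda)$ is, by the hypothesis on $\xi$ together with Remark \ref{rem: remark about the zeros of Kostant matrix}, a $W$-invariant entire function of exponential type $R$ with polynomial decay along $\mf a^*$. Pulling the constant-coefficient matrix of differential operators $\mathbf D^\delta$ outside the integral, each matrix entry of $f$ becomes $\mathbf D^\delta$ applied to the classical inverse bi-$K$-invariant spherical transform of an entry of $\eta$. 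The Helgason--Gangolli Paley--Wiener theorem then forces that scalar inverse transform to be supported in a ball of comparable radius, and since differentiation does not enlarge support, the same holds for $f$.

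\textbf{Continuity and main obstacle.} The estimates above match the Fr\'echet seminorms on $\mc D^R_\delta(X)$ described in Remark \ref{rem:pri:topology of D delta X} with the seminorms defining $\mc H_\delta^R(\mf a^*_\C)$, so continuity in both directions follows, and taking inductive limits in $R$ gives the desired topological isomorphism. The main obstacle is the compact-support step in the reverse direction; the point of the Kostant-matrix divisibility condition built into $\mc P^\delta(\mf a^*_\C)$ is precisely to enable the reduction to the scalar theorem via (\ref{relation of gen sp fn and ele sp fn}), and without it no such reduction is available.
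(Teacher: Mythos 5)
Your argument is essentially correct, but it takes a genuinely different and much longer route than the paper. The paper's proof is a two-step reduction: Helgason has already proved (Ch.~III, Theorem 5.11 of \cite{Helgason-gas}) that $f \mapsto \widehat{f}$ is a topological isomorphism of $\mc D(\check{\delta},X)$ onto $\mathcal P^\delta(\mathfrak a^*_\C)$, and since $\widetilde{f} = \what{(\mc Q f)}$ by Definition \ref{Def:delta spherical transform}, the theorem follows immediately by composing with the homeomorphism $\mc Q$ of Proposition \ref{left delta left delta check identification}. What you have done instead is reconstruct Helgason's proof of Theorem 5.11 itself from the ingredients scattered through the paper: the Arthur estimate (\ref{eq:pri:estimate of gen sp funct}) and the eigenvalue relation (\ref{eq:pri:gen.sp. funct. as eigen funct. of L}) for the forward direction, the relation (\ref{W action on Phi|A*}) for the Kostant condition, and the factorization (\ref{relation of gen sp fn and ele sp fn}) plus the scalar Helgason--Gangolli theorem for the support statement in the reverse direction. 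Your version is self-contained and usefully exposes the mechanism that the paper reuses later in the Schwartz-space setting (compare Lemma \ref{Lem:5} and Lemma \ref{Lem:S-delta and S-0 homeomorphic}, which are exactly your reverse-direction argument transplanted to $\mc S^p_\delta$), whereas the paper's version buys brevity by delegating all the analysis to the cited reference. One spot in your write-up is too thin: you justify that $\eta(\lambda) = Q^\delta(\lambda)^{-1}\xi(\lambda)$ is again of exponential type with rapid decay by appealing to Remark \ref{rem: remark about the zeros of Kostant matrix}, but that remark only controls $\det Q^\delta$ on the tube $\mf a^* + i\overline{\mf a^{*+}}$ and says nothing about the growth of the entire quotient on all of $\mf a^*_\C$; the correct tool is precisely Lemma \ref{Lem:4} (Helgason's Lemma 5.12), which asserts that $\psi \mapsto Q^\delta\psi$ is a homeomorphism of $\mathcal P^\delta_0(\mathfrak a^*_\C)$ onto $\mathcal P^\delta(\mathfrak a^*_\C)$, so its inverse keeps you inside the Paley--Wiener class. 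With that citation substituted, your reverse direction is sound.
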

\begin{proof}
  Our proof solely relies on the proof of the topological Paley-Wiener theorem given by Helgason (\cite{Helgason-gas}, Ch.-III, Theorem 5.11), where he characterized the image of the space $\mathcal D(\check{\delta}, X)$ under the transform $f \mapsto \widehat{f}$, where
\begin{equation}
\label{eq:delta sp. transform Helgason}
\widehat{f}(\lambda) = d_\delta \int_G f(x)~\Phi_{\overline{\lambda}, \delta}^*(x) dx, ~~~ (\lambda \in \mathfrak a^*_\C).
\end{equation}
Helgason proved that the above transform is a topological isomorphism between the spaces $\mathcal D(\check{\delta}, X)$ and $\mathcal P^\delta(\mathfrak a^*_\C)$. From the Proposition \ref{left delta left delta check identification} and the definition (\ref{eq:def. delta sp transform}) of the $\delta$-spherical transform we get: for each $f \in \mc D_\delta(X)$, $\what{(\mc Q f)} (\lambda) = \widetilde{f}(\lambda)$,~ $(\forall \lambda \in \mf a^*_\C)$.
The proposition now follows from the fact that both the maps $\mc Q$ and $f \mapsto \widehat{f}$ are homeomorphisms.
\end{proof}
Let us now consider the function space $\mathcal P_0^\delta(\mathfrak a^*_\C)= \left\{ h \in \mathcal H(\mf a^*_\C) |~ h ~\mbox{is }~ W\mbox{-invariant} \right\}$ with the relative topology. A function $h \in \mathcal P^\delta_0(\mathfrak a^*_\C)$ can be written as $h \equiv \left(h_{ij} \right)_{{\ell_\delta} \times d_\delta}$ where each of the scalar valued component function $h_{ij}$ is entire, $W$-invariant and of exponential type. Let $\mathcal D( G// K)$ and $\mathcal D( G// K,Hom(V_\delta, V_\delta^M))$ are respectively be the spaces of scalar valued and $Hom(V_\delta, V_\delta^M)$ valued  bi-$K$-invariant, compactly supported, $\mc C^\infty$ functions on $G$. The Paley-Wiener theorem for the bi-$K$-invariant functions gives an unique $f_{ij} \in \mathcal D(G// K)$ such that $\mathcal S f_{ij} = h_{ij}$. We set $f \equiv \left( f_{ij}\right)_{{\ell_\delta} \times d_\delta}  \in \mc D(G//K, Hom(V_\delta, V_\delta^M))$ and define the spherical transform $\mc S f$ matrix entry wise to get $\mathcal S f = h$. Moreover by suitably modifying the proof of the P-W theorem for the bi-$K$-invariant functions one can show that $\mathcal S$ is infact a homeomorphism between the spaces $\mathcal D( G//K, Hom(V_\delta, V_\delta^M) )$ and $\mathcal P^\delta_0(\mathfrak a^*_\C)$. The following Lemma identifies the two PW-spaces $\mathcal P^\delta(\mathfrak a^*_\C) $ and $\mathcal P^\delta_0(\mathfrak a^*_\C)$.
\begin{Lemma}
\emph{[Helgason \cite[ Ch.-III, $\S$5, Lemma 5.12]{Helgason-gas}]}\\
\label{Lem:4}
  The mapping $\psi(\lambda) \mapsto Q^\delta(\lambda) \psi(\lambda)$ ($\lambda \in \mathfrak a^*_\C$) is a homeomorphism from $\mathcal P^\delta_0(\mathfrak a^*_\C)$ onto $\mathcal P^\delta(\mathfrak a^*_\C)$.
\end{Lemma}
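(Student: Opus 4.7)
The plan is to verify that the linear map $\Psi:\psi\mapsto Q^\delta\psi$ is a continuous bijection from $\mc P_0^\delta(\mf a^*_\C)$ onto $\mc P^\delta(\mf a^*_\C)$ with continuous inverse. The forward direction is essentially bookkeeping: take $\psi\in\mc P_0^\delta(\mf a^*_\C)$ of exponential type $R$, and set $\phi:=Q^\delta\psi$. Since each entry of $Q^\delta$ is a polynomial of some bounded degree $d$, one has $\|Q^\delta(\la)\|_{\mathbf{2}}\le c(1+\|\la\|)^d$, so the defining seminorms of $\phi$ are controlled by those of $\psi$ with the polynomial index shifted by $d$; in particular $\phi\in\mc H_\delta^R(\mf a^*_\C)$. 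Entirety and $W$-invariance of $(Q^\delta)^{-1}\phi=\psi$ are given by hypothesis, so $\phi\in\mc P^\delta(\mf a^*_\C)$ and $\Psi$ is continuous.

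For the inverse, given $\xi\in\mc P^\delta(\mf a^*_\C)$ the defining property of $\mc P^\delta$ supplies $\psi:=(Q^\delta)^{-1}\xi$ as an entire, $W$-invariant function, and what remains is the exponential-type bound. Using Cramer's rule $(Q^\delta)^{-1}=(\det Q^\delta)^{-1}\,\mathrm{adj}(Q^\delta)$, and the non-vanishing of $\det Q^\delta$ on the closed tube $\mf a^*+i\overline{\mf a^{*+}}$ (Remark~3.5), a standard Seidenberg--Tarski/{\L}ojasiewicz estimate for polynomials non-vanishing on a closed semialgebraic set yields a uniform lower bound $|\det Q^\delta(\la)|\ge c(1+\|\la\|)^{-N_0}$ on that tube. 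Together with the polynomial growth of the adjugate this puts $(Q^\delta)^{-1}$ under polynomial control on the tube, so $\psi$ inherits the exponential type $R$ of $\xi$ there, up to a polynomial shift in the decay index. Since the Weyl translates $\bigcup_{w\in W} w(\mf a^*+i\overline{\mf a^{*+}})$ cover $\mf a^*_\C$, the $W$-invariance of $\psi$ propagates the estimate to all of $\mf a^*_\C$. The same chain of inequalities applied seminorm-by-seminorm gives continuity of the inverse.

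The principal obstacle is the polynomial lower bound $|\det Q^\delta(\la)|\ge c(1+\|\la\|)^{-N_0}$ on $\mf a^*+i\overline{\mf a^{*+}}$: in the rank-one case it follows at once from the explicit factorization of the Kostant polynomial noted in Remark~3.5, whereas for general rank one must invoke a semialgebraic lower-bound estimate of the above type. An alternative, more structural route would bypass this analysis by passing through the already established Paley--Wiener isomorphisms, namely the $\delta$-spherical Paley--Wiener Theorem~3.2 (identifying $\mc D_\delta(X)$ with $\mc P^\delta(\mf a^*_\C)$) and the matrix-valued bi-$K$-invariant statement recalled just before the lemma (identifying $\mc D(G//K,\mathrm{Hom}(V_\delta,V_\delta^M))$ with $\mc P_0^\delta(\mf a^*_\C)$); the intertwining identity $\Phi_{\la,\delta}|_{V_\delta^M}=(\mathbf D^\delta\varphi_\la)\,Q^\delta(\la)^{-1}$ from Proposition~3.4(v) would then realise $\Psi$ on the function side as the map induced by the fixed differential operator $\mathbf D^\delta$, whose homeomorphism property between the corresponding function spaces is routine to verify.
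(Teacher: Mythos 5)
Your argument is correct, but it is worth noting that the paper does not actually prove this lemma: it is quoted verbatim from Helgason \cite[Ch.~III, \S 5, Lemma~5.12]{Helgason-gas} with no proof supplied, so you have filled in a proof rather than reproduced one. Your forward direction (polynomial matrix times exponential-type function) is the easy half, exactly as in Helgason. For the converse, the only genuinely delicate point is the lower bound on $|\det Q^\delta(\lambda)|$ over the closed tube $\mathfrak a^* + i\overline{\mathfrak a^{*+}}$, and here you reach for a Tarski--Seidenberg/{\L}ojasiewicz estimate; that works, but it is heavier than necessary, because the paper itself later establishes (Proposition \ref{Proposition:Lower bound for the Kostants polynomials}, via the product formula (\ref{eq:product formula for the det of the Kostant matrix}) over the rank-one reductions and the location of the zeros of the rank-one Kostant polynomials recorded in Remark \ref{rem: remark about the zeros of Kostant matrix}) the stronger statement that $\inf_{\lambda\in\mathfrak a^*+i\overline{\mathfrak a^{*+}}}|\det Q^\delta(\lambda)|\geq c_\delta>0$; citing that gives your polynomial control of $(Q^\delta)^{-1}$ immediately. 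Your propagation of the estimate to all of $\mathfrak a^*_\C$ via $W$-invariance of $\psi$ and the $W$-invariance of $\|\lambda\|$ and $\|\Im\lambda\|$ is correct, and the same chain of seminorm inequalities does give continuity of the inverse on each step $\mathcal H^R_\delta$ of the inductive limit. One caution about your sketched ``alternative, more structural route'' through $\mathbf D^\delta$: in this paper the surjectivity of $\phi\mapsto\mathbf D^\delta\phi$ onto $\mc D_\delta(X)$ is \emph{deduced from} the present lemma (that is precisely Lemma \ref{Lem:5}), so taking that homeomorphism as the starting point would be circular as the material is organised here; the direct estimate you give in the first two paragraphs is the right proof.
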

%\begin{proof}
%For a proof of the above Lemma see .
%\end{proof}
 % The following is the key lemma for the proof of Theorem \ref{the:main}.
\begin{Lemma}
\label{Lem:5}
  Any function $f \in \mc D_\delta(X)$ can be written as $f(x)= \mathbf{D}^\delta \phi(x)$ ($\forall x \in G$), where $\phi \in \mathcal D (G//K, Hom(V_\delta, V_\delta^M))$ and $\mathbf{D}^\delta$ is the ${d_\delta} \times {\ell_\delta}$ matrix of constant coefficient differential operators as mentioned in (\ref{relation of gen sp fn and ele sp fn}).
\end{Lemma}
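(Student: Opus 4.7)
My plan is to combine three ingredients: the topological Paley-Wiener correspondence of Theorem~\ref{The:Topological PW Theorem}, the matrix factorization supplied by Lemma~\ref{Lem:4} together with the entry-wise matrix-valued Paley-Wiener theorem for bi-$K$-invariant functions sketched just before Lemma~\ref{Lem:4}, and the functional identity (\ref{relation of gen sp fn and ele sp fn}) relating $\Phi_{\lambda,\delta}$ to $\varphi_\lambda$ via $\mathbf{D}^\delta$ and $Q^\delta$. The candidate $\phi$ will be read off from a factorization of $\widetilde{f}$, and the equality $f=\mathbf{D}^\delta\phi$ will then emerge directly by substitution into the $\delta$-spherical inversion formula (\ref{eq:inversion formuta delta sp. transform}).

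First I would construct $\phi$. Given $f\in\mathcal{D}_\delta(X)$, Theorem~\ref{The:Topological PW Theorem} places $\widetilde{f}$ in $\mathcal{P}^\delta(\mathfrak{a}^*_\C)$, and Lemma~\ref{Lem:4} then produces a unique $\psi\in\mathcal{P}^\delta_0(\mathfrak{a}^*_\C)$ with $\widetilde{f}(\lambda)=Q^\delta(\lambda)\psi(\lambda)$. Applying the classical Paley-Wiener theorem for scalar bi-$K$-invariant functions to each of the $\ell_\delta\times d_\delta$ entries of $\psi$ delivers a unique $\phi\in\mathcal{D}(G//K,\,Hom(V_\delta,V_\delta^M))$ with $\mathcal{S}\phi=\psi$; equivalently, $\widetilde{f}(\lambda)=Q^\delta(\lambda)\,\mathcal{S}\phi(\lambda)$.

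Next I would insert this factorization into (\ref{eq:inversion formuta delta sp. transform}). Since $\mathcal{S}\phi(\lambda)$ has range in $V_\delta^M$ and, by Remark~\ref{rem:pri:matrix of Phi lambda delta}, $\Phi_{\lambda,\delta}(x)$ vanishes on $(V_\delta^M)^\perp$, only the restriction $\Phi_{\lambda,\delta}(x)|_{V_\delta^M}$ contributes to the integrand. Multiplying identity (\ref{relation of gen sp fn and ele sp fn}) on the right by $Q^\delta(\lambda)$ converts the integrand into $(\mathbf{D}^\delta\varphi_\lambda)(x)\,\mathcal{S}\phi(\lambda)$. Pulling $\mathbf{D}^\delta$, a matrix of constant-coefficient elements of $\mathcal{U}(\mathfrak{g})$, outside of the $\lambda$-integral and invoking the scalar spherical inversion formula (\ref{eq:inversion of spherical transform}) entry-wise on $\mathcal{S}\phi$ then identifies the bracketed integral with $\phi(x)$, yielding $f(x)=\mathbf{D}^\delta\phi(x)$.

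The main technical point is justifying the interchange of the differential operator $\mathbf{D}^\delta$ (acting in $x$) with the $\lambda$-integral. This is standard but not entirely free: the compact support of $\phi$ forces $\mathcal{S}\phi$ to be of Paley-Wiener type and hence rapidly decreasing on $\mathfrak{a}^*$, while standard polynomial-in-$\lambda$ bounds on derivatives of $\varphi_\lambda$ (a special case of estimate (\ref{eq:pri:estimate of gen sp funct})) yield locally uniform control on $\mathbf{D}^\delta\varphi_\lambda(x)$. Combined with the $\mathbf{c}$-function bound (\ref{eq:c-function estimate}), these furnish integrable majorants for the integrand and for each of its $x$-derivatives, so differentiation under the integral sign is legitimate by dominated convergence.
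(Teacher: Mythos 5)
Your proposal is correct and follows essentially the same route as the paper: construct $\phi$ from $\widetilde f$ via Theorem~\ref{The:Topological PW Theorem}, Lemma~\ref{Lem:4} and the bi-$K$-invariant Paley--Wiener theorem, then use identity (\ref{relation of gen sp fn and ele sp fn}) inside the inversion integral to identify $f$ with $\mathbf{D}^\delta\phi$. The only (immaterial) difference is direction --- the paper applies $\mathbf{D}^\delta$ to the inversion integral defining $\phi$ and recognizes the result as $f$, whereas you rewrite the inversion integral for $f$ to extract $\mathbf{D}^\delta$ --- and you supply slightly more detail on the $V_\delta^M$ restriction and the differentiation under the integral sign, which the paper passes over with a reference to absolute convergence.
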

\begin{proof}
  Let $f \in \mc D_\delta(X)$, then by Theorem \ref{The:Topological PW Theorem}, its $\delta$-spherical transform $\widetilde{f} \in \mathcal P^\delta(\mathfrak a^*_\C) $. Using the homeomorphism given in Lemma \ref{Lem:4}, we get an unique function $\lambda \mapsto \Phi(\lambda)= Q^\delta(\lambda)^{-1} \widetilde{f} (\lambda)$ in $\mathcal P^\delta_0(\mathfrak a^*_\C)$. By the PW-theorem for the bi-$K$-invariant functions we get a function $\phi \in \mathcal D(G//K, Hom(V_\delta, V_\delta^M))$ such that:
\begin{equation}
\label{eq:1}
\phi(x) = \frac{1}{|W|} \int_{\mathfrak a^*} \varphi_\lambda(x) \Phi(\lambda) |\mathbf{c}(\lambda)|^{-2} d \lambda.
\end{equation}
Now by applying the differential operator $\mathbf{D}^\delta$ on the both sides of (\ref{eq:1}) and by using  the absolute convergence of the integral in (\ref{eq:1}) we get:
\begin{align*}
\left(\mathbf{D}^\delta \phi\right)(x) &= \frac{1}{|W|} \int_{\mathfrak a^*} \left(\mathbf{D}^\delta \varphi_\lambda(x) \right) \Phi(\lambda) |\mathbf{c}(\lambda)|^{-2} d \lambda, \\
&= \frac{1}{|W|} \int_{\mathfrak a^*} \Phi_{\lambda, \delta}(x) Q^\delta(\lambda) \Phi(\lambda) |\mathbf{c}(\lambda)|^{-2} d \lambda,\\
&= \frac{1}{|W|} \int_{\mathfrak a^*} \Phi_{\lambda, \delta}(x) \widetilde{f}(\lambda) |\mathbf{c}(\lambda)|^{-2} d \lambda
= f(x).
\end{align*}
The second line of the above calculation is a consequence of (\ref{relation of gen sp fn and ele sp fn}) and the last line is merely the inversion formula (\ref{eq:inversion formuta delta sp. transform}).
\end{proof}
We conclude this section with a `product formula', due to Helgason, for the polynomial $\det Q^\delta(\lambda)$ with $\lambda \in \mf a^*_\C$. First we shall characterize the set $\what{K_\beta}_{M_\beta}$ ( $\beta \in \Sigma^+_0$) of equivalence classes of representations of $K_\beta$ as certain restrictions of the representations $\delta \in \what{K}_M$. Let  $\delta \in \what{K}_M$ and $ V_\delta$ and $V_\delta^M$ be as explained earlier. Let $V$ denote the $K_\beta M$-invariant subspace of $V_\delta$ generated by $V_\delta^M$. Then $V$ decomposes into $K_\beta M$-irreducible subspaces as $V= \bigoplus_{i=1}^{\ell_\delta} V_i.$
Let $\delta(i, \beta)$ be the representation of $K_\beta M$ on $V_i$ given by $\delta$. Then each $\delta(i, \beta) $ is irreducible except when $dim K_\beta =1$ and $m k m^{-1} = k^{-1}$ for all $k \in K_\beta$ and some  $m \in M$ \cite[Ch. III, Lemma 3.9 - 3.11 ]{Helgason-gas}. In this case $\delta(i, \beta)$ brakes up into two irreducible one dimensional representations as
$\delta(i, \beta) = \delta(i, \beta)_0 \oplus \delta(i, \beta)_0\check{}$
where $\delta(i, \beta)_0\check{}$ is the contragradient representation of $\delta(i, \beta)_0$ and in this particular case we choose $\delta(i, \beta)_0\check{}$ as $\delta(i, \beta)$.
\begin{Lemma}
\label{lem:delta(i, beta) for rank-one restrictions}
\emph{Helgason \cite[Ch. III, Prposition 4.3]{Helgason-gas}} For each $\beta \in \Sigma_0^+$, as $\delta $ runs through $\what{K}_M$, the representations $\delta(i, \beta) $ ($1 \leq i \leq \ell$) runs through all of $\what{K_\beta}_{M_\beta}$.
\end{Lemma}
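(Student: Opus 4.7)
The lemma is cited as Helgason's, so I sketch the natural plan rather than execute the full proof. The claim splits into two parts: (a) every $\delta(i,\beta)$ lies in $\what{K_\beta}_{M_\beta}$, and (b) this construction is surjective onto $\what{K_\beta}_{M_\beta}$.

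For (a), first observe that $K_\beta M$ is a subgroup of $K$: since $M$ centralizes $A \supset \mf a_{(\beta)}$, conjugation by $M$ preserves each of the root spaces $\mf g_{\pm\beta}$ and $\mf g_{\pm 2\beta}$ generating $\mf g_{(\beta)}$, so $M$ normalizes $G_{(\beta)}$ and hence $K_\beta$. The orthogonal projection $P_i : V \to V_i$ is therefore $K_\beta M$-equivariant, and in particular commutes with the $M$-action. Because $V$ is by construction the $K_\beta M$-cyclic span of $V_\delta^M$ and $V = \bigoplus_i V_i$ is its irreducible decomposition, each $P_i(V_\delta^M)$ must be nonzero, and any nonzero vector in it is $M$-fixed, hence $M_\beta$-fixed. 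Combined with the given $K_\beta M$-irreducibility of $\delta(i,\beta)$ (or, in the exceptional case, with the author's chosen summand $\delta(i,\beta)_0\check{}$), this gives $\delta(i,\beta) \in \what{K_\beta}_{M_\beta}$.

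For (b), given $\tau \in \what{K_\beta}_{M_\beta}$, the plan is to extend $\tau$ to a $K_\beta M$-representation $\tilde\tau$ on $V_\tau$ by letting $M$ act trivially on $V_\tau^{M_\beta}$. This extension is legitimate precisely when $\tau$ is isomorphic to each $M$-conjugate $\tau^m(k) = \tau(mkm^{-1})$, which is the generic situation. Frobenius reciprocity applied to the inclusion $K_\beta M \subset K$ then embeds $\tilde\tau$ into $V_\delta|_{K_\beta M}$ for some $\delta \in \what{K}$; since $\tilde\tau$ carries $M$-fixed vectors, so does $V_\delta$, placing $\delta \in \what{K}_M$. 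The copy of $\tilde\tau$ inside $V_\delta$ is generated by its $M$-fixed vectors, so it lies in the $K_\beta M$-span of $V_\delta^M$, namely $V$. Restricting back to $K_\beta$ realizes $\tau$ as some $\delta(i,\beta)|_{K_\beta}$.

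The main obstacle is the exceptional case $\dim K_\beta = 1$ with some $m \in M$ inducing inversion on $K_\beta$: there $\tau^m \cong \check\tau \not\cong \tau$, and the extension of $\tau$ to $K_\beta M$ with trivial $M$-action breaks down. The remedy is to pair $\tau$ with $\check\tau$ into a single $K_\beta M$-irreducible module that carries $M$-fixed vectors, apply the Frobenius argument to this combined module to locate it inside some $V$, and then reconcile the resulting $K_\beta$-splitting with the author's convention identifying $\delta(i,\beta)$ with $\delta(i,\beta)_0\check{}$, so that $\tau$ itself (rather than merely the pair $\tau \oplus \check\tau$) is recovered as one of the $\delta(i,\beta)$.
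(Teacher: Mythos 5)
The paper offers no argument for this lemma at all: it is imported verbatim from Helgason \cite[Ch.~III, Proposition~4.3]{Helgason-gas} and used as a black box to feed the product formula (\ref{eq:product formula for the det of the Kostant matrix}) and Proposition \ref{Proposition:Lower bound for the Kostants polynomials}. So there is nothing in the paper to compare your sketch against, and I can only judge it on its own terms. Part (a) is sound: since $M$ centralizes $\mf a$ it preserves each root space $\mf g_{\pm\beta},\mf g_{\pm 2\beta}$, hence normalizes $G_{(\beta)}$ and $K_\beta$, so $K_\beta M$ is a group; the projections $P_i$ onto the irreducible summands are $K_\beta M$-equivariant, $P_i(V_\delta^M)\neq 0$ because the $K_\beta M$-span of $V_\delta^M$ is all of $V$, and $M$-fixed vectors are a fortiori $M_\beta$-fixed because $M_\beta=M\cap K_\beta\subseteq M$.

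The gap is in part (b). The recipe ``extend $\tau$ to $K_\beta M$ by letting $M$ act trivially on $V_\tau^{M_\beta}$'' does not define a representation of $K_\beta M$ on $V_\tau$: one must prescribe the $M$-action on all of $V_\tau$, and it is forced to agree with $\tau|_{M_\beta}$ on $M\cap K_\beta=M_\beta$, which need not be trivial ($\tau\in\what{K_\beta}_{M_\beta}$ only requires $V_\tau^{M_\beta}\neq 0$, not $\tau|_{M_\beta}=1$). Even granting the $M$-stability $\tau^m\cong\tau$, Clifford--Mackey theory leaves a genuine extension (cocycle) problem that the sketch does not address, and the exceptional case is named as ``the main obstacle'' without being resolved. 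A cleaner route that avoids extending $\tau$ altogether: the orbit $K_\beta M\cdot o\cong K_\beta/M_\beta$ is a closed submanifold of $K/M$, and restriction $C(K/M)\to C(K_\beta/M_\beta)$ is a surjective $K_\beta$-equivariant map; by Frobenius reciprocity the first space contains exactly the $\delta\in\what{K}_M$ and the second exactly the $\tau\in\what{K_\beta}_{M_\beta}$, so every such $\tau$ occurs in some $V_\delta|_{K_\beta}$, and since the restricted matrix coefficients are taken against vectors of $V_\delta^M\subseteq V$ and $V^{\perp}$ is $K_\beta$-invariant, $\tau$ already occurs in $V=\bigoplus_i V_i$, i.e.\ among the $\delta(i,\beta)$. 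Your Frobenius idea points in this direction, but as written the extension step on which it rests would fail.
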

Let $Q^{\delta(i, \beta)}_\beta (\lambda_\beta)$ be the Kostant polynomial for the rank-one symmetric space $G_\beta/K_\beta$ corresponding to the representation $\delta(i, \beta) \in \what{K_\beta}_{M_\beta}$. Then the determinant of the Kostant matrix $Q^\delta(\lambda)$ ($\lambda \in \mf a^*_\C$) can be represented by a product formula \cite[Ch. III, \S3, (50) and \S4, Theorem 4.2 ]{Helgason-gas}
\begin{equation}
\label{eq:product formula for the det of the Kostant matrix}
\det Q^\delta(\lambda) = \mc C_\delta \prod_{\beta \in \Sigma_0^+,~1 \leq i \leq \ell_\delta } Q^{\delta(i, \beta)}_\beta(\lambda_\beta), \mbox{~for all~} \lambda \in \mf a^*_\C
\end{equation}
where $\mc C_\delta$ is a nonzero constant depending on $\delta \in \what{K}_M$.
%%%%%%%%%%%%%%%%%%%%%%%%%%%%%%%%%%%%%%%%%%%%%%%%%%%%%%%%%%%%%%%%%%%%%%%%%%%%%%%%%%%%%%%%%%%%%%%%%%%%%%%%%%%%%%%%%%%%%%%%%%%%%%
\section{\textbf{$L^p$-Schwartz spaces}}
\label{sec:Schwartz spaces}
\setcounter{equation}{0}
\begin{Definition}\emph{[Classical $L^p$ Schwartz space]}\\
\label{def:Classical L-p Schwartz space}
  A $\mc C^\infty$ function $f$ on $X$ is said to be in the $L^p$-Schwartz space ( $0< p \leq 2$ ) $\mc S^p(X)$ if for each nonnegative integer $n$ and $D, E \in \mathcal U(\mathfrak g_\C)$ the function $f$ satisfies the following decay condition:
\begin{equation}
\label{eq:Classical Sw-space decay}
  \mu_{D, E, n}(f)= \sup_{x \in G} |f(D, x, E)| ~\varphi_0^{-\frac{2}{p}}(x)~ (1 + |x|)^n~ < +\infty.
\end{equation}
\end{Definition}
The topology induced by the countable family of seminorms $\mu_{D, E, n}(\cdot)$ makes $\mc S^p(X) $ a Fr\'{e}chet space. It can be shown that $\mathcal D(X)$ is a dense subspace of $\mc S^p(X)$ for $0 < p \leq 2$. Let $f \in \mc S^p(X)$, we take its $\delta$-projection $f^\delta$ as defined in (\ref{matrix valud delta projection}). Then $f^\delta$ is a left $\delta$-type $Hom(V_\delta, V_\delta)$ valued function  with a decay
 \begin{equation}
\label{eq:op-valued delta Sw-space decay}
\mu_{D, E, n}(f^\delta)= \sup_{x \in G} \|f^\delta(D, x, E)\|_{\mathbf{2}}~ \varphi_0^{-\frac{2}{p}}(x)~ (1 + |x|)^n~ < +\infty,
\end{equation}
where $\|\cdot\|_{\mathbf{2}}$ denotes the Hilbert Schmidt norm. Let us denote $\mc S^p_\delta(X)$ for the class of left $\delta$-type $Hom(V_\delta , V_\delta)$ valued functions with the decay (\ref{eq:op-valued delta Sw-space decay}). For each $h \in \mc S^p_\delta(X)$, the scalar valued function $tr h (\cdot)$ satisfies $tr h \equiv d_{\check{\delta}}( \chi_{\check{\delta}} \ast tr h)$ and the decay (\ref{eq:Classical Sw-space decay}). Let $\mc S^p(\check{\delta},X) \subset \mc S^p(X)$ be the class of all scalar valued left-$\check{\delta}$ type Schwartz class functions. Both the spaces
$\mc S^p_\delta(X)$ and $\mc S^p(\check{\delta},X)$ becomes Fr\`{e}chet spaces with the topologies induced by the family of seminorms given in (\ref{eq:op-valued delta Sw-space decay}) and (\ref{eq:Classical Sw-space decay}) respectively. Moreover, $\mc D_\delta(X)$ and $\mathcal D (\check{\delta},X)$ are respectively dense subspaces of $\mc S^p_\delta(X)$ and $\mc S^p(\check{\delta},X)$ in the respective Schwartz space topologies. The topological isomorphism $\mc Q$ in Proposition \ref{left delta left delta check identification} can be extended between the Schwartz spaces $\mc S^p_\delta(X)$ and $\mc S^p(\check{\delta},X)$.

Next we shall to extend the definition of the $\delta$-spherical transform to the Schwartz space $\mc S^p_\delta(X)$ ($0 < p \leq 2$). The spherical transform (\ref{eq:spherical transform}) defined on $\mathcal D( G//K)$ can be extended to the $L^p$-Schwartz spaces $\mc S^p(G//K)$ of bi-$K$-invariant functions on the group $G$.
The image $\mc S(\mf a^*_\e)$ (defined below) of $\mc S^p(G//K)$ under the spherical transform is again a Schwartz class of functions defined o the complex tube  $\mf a^*_\e = \mf a^* + i C^{\e \rho}$ where $\e = \left(\frac{2}{p} - 1 \right)$ and
$$C^{\varepsilon \rho}= \{\lambda \in \mathfrak a^*~|~ \omega \lambda(H) \leq \varepsilon \rho(H) \mbox{~for all~} H \in \overline{\mathfrak a^+} \mbox{~and~} \omega \in W \}. $$
\begin{Definition}
\label{Def:even Sw space on lambda}
  The space $\mc S(\mathfrak a^*_\e)$ consists of the complex valued functions $h$ on $\mathfrak a^*_\e$ such that:
  \begin{enumerate}
    \item $h$ is holomorphic in the interior of the tube $\mathfrak a^*_\e$ and continues on the closed tube,
    \item  $h $ is $W$-invariant,
    \item for any polynomial $P$ in the algebra  $ S(\mf a)$ of the symmetric polynomials on $\mf a^*$ and any positive integer $r$
\begin{equation}
\label{eq:Sw. space decay on lambda}
\tau_{P,r}(h)=\sup_{\lambda \in Int \mathfrak a^*_\e} (1 + \|\lambda\|)^r \left| P\left(\frac{\partial}{\partial \lambda}  \right) h(\lambda)\right| < + \infty.
\end{equation}
  \end{enumerate}
\end{Definition}
The countable family $\left\{\tau_{P,r} ~|~P \in S(\mf a), r \in \Z^+ \cup\{0\} \right\}$ gives a Fr\`{e}chet norm on the space $S(\mf a^*_\e)$.
\begin{Remark}
\label{rem:Alt form of seminorms on even SW space on lambda}
The topology of $\mc S(\mf a^*_\e)$ can also be given by the following equivalent family of seminorms.
\begin{equation}
\label{eq:Sw. space decay reduced on lambda}
\tau_{P, r}^+(h)= \sup_{\lambda \in Int \mathfrak a^*_\e \cap \left( \mathfrak a^* + i \overline{\mathfrak a^{*+}}\right)}(1 + \|\lambda\|)^r \left| P\left(\frac{\partial}{\partial \lambda}  \right) h(\lambda)\right| < + \infty.
\end{equation}
This is an easy consequence of (ii) of Definition \ref{Def:even Sw space on lambda} and the fact $\|\omega \lambda \| = \|\lambda\|$ for all $\lambda \in \mf a^*_\C$ and $\omega \in W$.
\end{Remark}
We now state the $L^p$-Schwartz space isomorphism theorem for the bi-$K$-invariant functions.
\begin{Theorem}
\label{Theo:SW. iso the for C valued spherical transform}
  The spherical transform $f \mapsto \mathcal S f$ defined in (\ref{eq:spherical transform}) is topological isomorphism between the spaces $\mc S^p(G //K)$ ($0 <p \leq 2$) and $\mc S(\mathfrak a^*_\e)$.
\end{Theorem}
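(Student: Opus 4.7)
The plan is to follow Anker's strategy: set up continuous maps in both directions between the Fréchet spaces $\mc S^p(G//K)$ and $\mc S(\mf a^*_\e)$, then deduce that they are mutual inverses from the fact that they are so on the dense subspace $\mc D(G//K)$, where the classical inversion (\ref{eq:inversion of spherical transform}) together with the scalar Paley-Wiener theorem applies.

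For the forward direction, I would estimate $\mc Sf(\lambda)$ for $f \in \mc S^p(G//K)$ and $\lambda = \xi + i\eta \in \mf a^*_\e$ directly from (\ref{eq:spherical transform}), switching to the Cartan decomposition (\ref{eq:Haar measure KAK}). After Weyl-conjugating $\eta$ into the positive chamber, the estimate (\ref{eq:estimate of phi lambda}) gives $|\varphi_{-\lambda}(\exp H)| \leq e^{\e \rho(H)}\varphi_0(\exp H)$; combining this with the Schwartz decay $|f(\exp H)| \leq C(1+\|H\|)^{-N}\varphi_0(\exp H)^{2/p}$ and the density bound (\ref{eq:estimate of KAK Haar density}) yields absolute convergence with arbitrary polynomial decay in $\|\lambda\|$. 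Holomorphicity on the open tube and $W$-invariance are routine, and derivatives $P(\partial/\partial\lambda)\mc Sf(\lambda)$ are handled by bringing powers of $\mc H(x^{-1}k)$ inside the integral, controlled via (\ref{eq:Iwasawa < Cartan}) and absorbed into a higher Schwartz seminorm of $f$.

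The serious step is the reverse direction. Given $h \in \mc S(\mf a^*_\e)$, set
\begin{equation*}
f(x) = \frac{1}{|W|}\int_{\mf a^*} \varphi_\lambda(x)\, h(\lambda)\, |\mathbf c(\lambda)|^{-2}\, d\lambda,
\end{equation*}
which is manifestly smooth and bi-$K$-invariant; the content is that $f \in \mc S^p(G//K)$ with continuous dependence on $h$. To recover the decay (\ref{eq:Classical Sw-space decay}), I would fix $H \in \overline{\mf a^+}$ and shift the contour of integration from $\mf a^*$ to $\mf a^* + i\e\rho$ by Cauchy's theorem. The shift is legitimate because the integrand is holomorphic on the open tube, $h$ decays as a Schwartz function uniformly in the imaginary direction, and $|\mathbf c(\lambda)|^{-2}$ enjoys a polynomial bound on the whole tube via \cite{Anker92}. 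On the shifted contour, (\ref{eq:estimate of phi lambda}) gives $|\varphi_\lambda(\exp H)| \leq e^{-\e\rho(H)}\varphi_0(\exp H)$; with $\e = 2/p - 1$ and (\ref{eq:estimate of phi_0}) this reproduces precisely the factor $\varphi_0(\exp H)^{2/p}$ up to polynomial corrections in $\|H\|$. Invariant differential operators acting on $f$ pull through as polynomials in $\lambda$, using that $\varphi_\lambda$ is an eigenfunction of the invariant operators, and these polynomials are absorbed by the seminorms $\tau_{P,r}(h)$.

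With both arrows continuous and mutual inversion already established on the common dense subspace $\mc D(G//K)$, continuous extension delivers the claimed homeomorphism. The main obstacle is plainly the contour shift: the direction $\eta \in C^{\e \rho}$ must be chosen according to the Weyl chamber containing $H$ (using $W$-invariance of the integrand to reduce to $H \in \overline{\mf a^+}$), the polynomial growth of $|\mathbf c(\lambda)|^{-2}$ off the real axis must be controlled uniformly, and the boundary contributions at $\|\Re \lambda\|\to\infty$ must be shown to vanish. This is precisely the step in Anker's method that replaces the higher Harish-Chandra asymptotic expansion of $\varphi_\lambda$ relied upon in the original Trombi-Varadarajan proof.
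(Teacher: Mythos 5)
The paper offers no proof of this theorem at all: it simply cites \cite{Gangolli88} for the result and points to Anker's proof in \cite{Anker91} as the one whose method it intends to propagate. So your sketch must be judged on its own merits and against Anker's actual argument. Your forward direction is fine and standard. The reverse direction, however, has a genuine gap at exactly the step you identify as "the main obstacle": the contour shift from $\mf a^*$ to $\mf a^* + i\e\rho$. The Plancherel density $|\mathbf{c}(\lambda)|^{-2}$ is only defined, and only estimated as in (\ref{eq:c-function estimate}), for $\lambda \in \mf a^*$; to shift the contour you must replace it by its meromorphic continuation $\mathbf{c}(\lambda)^{-1}\mathbf{c}(-\lambda)^{-1}$, and this continuation has poles (coming from the Gamma factors in the Gindikin--Karpelevich formula) at fixed imaginary heights comparable to $\rho$. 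Since $\e = \tfrac{2}{p}-1$ is unbounded as $p \to 0$, the tube $\mf a^*_\e$ eventually contains these poles, and already for $G = SL(2,\R)$ the continued density is essentially $\lambda \tanh(\pi\lambda)$ up to normalization, with poles on the imaginary axis. Cauchy's theorem therefore cannot be applied to the integrand as written, for all $0 < p \leq 2$. The classical escape is to expand $\varphi_\lambda = \sum_{w \in W} \mathbf{c}(w\lambda)\Phi_{w\lambda}$ on $A^+$ so that, after symmetrization, the density pairs with one $\mathbf{c}$-factor into $\mathbf{c}(-\lambda)^{-1}$, which is holomorphic and polynomially bounded on the relevant half-tube; but then one must control the Harish-Chandra series for $\Phi_\lambda$ near the walls of $\mf a^+$ --- precisely the "higher asymptotics" of Trombi--Varadarajan that both you and the paper claim to avoid. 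So your closing sentence has it backwards: the bare contour shift is not a replacement for the asymptotic expansion, it is the step that forces it.

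Anker's proof, which the paper is built around, avoids the contour shift entirely. Given $h \in \mc S(\mf a^*_\e)$, he decomposes $h = \sum_j h_j$ with each $h_j$ a $W$-invariant Paley--Wiener function of exponential type $R_j \to \infty$ and with Schwartz seminorms of $h_j$ decaying faster than $e^{-cR_j}$ for every $c>0$; the scalar Paley--Wiener theorem localizes $f_j = \mc S^{-1}h_j$ in the ball of radius $R_j$, where the crude bounds $|\varphi_\lambda(x)| \leq \varphi_0(x)$ (for real $\lambda$) and $\varphi_0(x)^{1-2/p} \leq C(1+|x|)^{d}e^{\e\rho(x^+)} \leq C R_j^{d} e^{c\e R_j}$ suffice, and the series of sup-bounds converges. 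If you want a proof in the spirit the paper requires --- one that feeds into Lemma \ref{Lem:cont. of the inverse of delta sp trans}, which explicitly invokes Anker's estimate (\ref{ali:9}) --- you should replace the contour shift by this Paley--Wiener decomposition.
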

For  a proof of this theorem one can see \cite{Gangolli88}. We are mainly interested in Anker's proof \cite{Anker91} of the above theorem, which relies on the PW theorem for the spherical transform.\\
Let $\mc S_0(\mathfrak a^*_\e)$ be the space of all $Hom(V_\delta, V_\delta^M)$ valued functions $h$ on $\mathfrak a^*_\e $ satisfying  (i), (ii) of Definition \ref{Def:even Sw space on lambda} along with the decay: for each polynomial $P \in S(\mf a)$ and integer $n \geq 0$
\begin{equation}
\label{eq:op valued Sw. space decay reduced on lambda}
\tau_{P, n}^+(h)= \sup_{\lambda \in Int \mathfrak a^*_\e \cap \left( \mathfrak a^* + i \overline{\mathfrak a^{*+}}\right)}(1 + \|\lambda\|)^n \left\| P\left(\frac{\partial}{\partial \lambda}  \right) h(\lambda)\right\|_{\mathbf{2}} < + \infty.
\end{equation}
The countable family $\{\tau_{P, n}^+\}$ induces a   a Fr\`{e}chet structure on $\mc S_0(\mathfrak a^*_\e)$.
The space $\mc S_0(\mf a^*_\e)$ can also be viewed as a space of $Hom(V_\delta, V_\delta^M)$ valued functions with each of its matrix entry function in $\mc S(\mf a^*_\e)$.
For our purpose we shall use the following  equivalent (inducing the same topology) family of seminorms on $\mc S_0(\mf a^*_\e)$:
   \begin{equation}\label{eq: alternative form of the seminotms}
    \tau^{+*}_{P, r} (h)= \hspace{-.2in}\sup_{\lambda \in Int(\mf a^*_\e \cap  (\mf a^* + i \overline{\mf a^{*+}}))} \left\| P\left(\frac{\partial}{\partial \lambda} \right)\left\{ h(\lambda) (\|\rho\|^2 + \langle \lambda, \lambda\rangle_1)^r\right\} \right\|_{\mathbf{2}},~  P \in S(\mf a), r \in \Z^+ \cup \{0 \}.
 \end{equation}
 Let $f =(f_{ij}) \in \mc S^p(G//K, Hom(V_\delta, V_\delta^M))$. Then by defining $\mc S f = (\mc S f_{ij})$ we can the
  following extension of the isomorphism of Theorem \ref{Theo:SW. iso the for C valued spherical transform}.
\begin{Lemma}
\label{Lem:SW. iso the for op. valued spherical transform}
  The spherical transform is a topological isomorphism between the spaces $\mc S^p(G // K, Hom(V_\delta,V_\delta^M))$ and $\mc S_0(\mathfrak a^*_\e)$.
\end{Lemma}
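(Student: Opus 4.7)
The plan is to reduce the matrix-valued statement to the scalar Schwartz isomorphism of Theorem \ref{Theo:SW. iso the for C valued spherical transform} by exploiting the fact that the target space $Hom(V_\delta, V_\delta^M)$ is finite dimensional and the Hilbert--Schmidt norm is equivalent to the maximum of the absolute values of the matrix entries. First I would fix bases for $V_\delta$ and $V_\delta^M$ and write every $f \in \mc S^p(G//K, Hom(V_\delta, V_\delta^M))$ as a matrix of scalar functions $f = (f_{ij})_{\ell_\delta \times d_\delta}$. The spherical transform then acts entrywise, $\mc Sf = (\mc Sf_{ij})$, so the whole map is well defined provided each $f_{ij}$ belongs to $\mc S^p(G//K)$.

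The next step is to verify this entrywise membership together with the two-sided continuity estimates. From $|f_{ij}(x)| \le \|f(x)\|_{\mathbf 2}$ and, in the reverse direction, $\|f(x)\|_{\mathbf 2} \le \sqrt{\ell_\delta d_\delta}\,\max_{i,j}|f_{ij}(x)|$, I obtain an equivalence of the seminorms $\mu_{D,E,n}$ on $\mc S^p(G//K, Hom(V_\delta, V_\delta^M))$ with the family $\{\mu_{D,E,n}(f_{ij})\}_{i,j}$ on the scalar side. Similarly, using (\ref{eq:op valued Sw. space decay reduced on lambda}) and the corresponding scalar seminorms (\ref{eq:Sw. space decay reduced on lambda}), the seminorms $\tau^+_{P,n}$ on $\mc S_0(\mf a^*_\e)$ are equivalent to the family $\{\tau_{P,n}(h_{ij})\}_{i,j}$ on $\mc S(\mf a^*_\e)$. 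Thus both Fr\'echet topologies are exactly the product topologies of $\ell_\delta d_\delta$ copies of their scalar counterparts.

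At this point Theorem \ref{Theo:SW. iso the for C valued spherical transform} applies to each entry: for each $(i,j)$ the scalar spherical transform is a topological isomorphism $\mc S^p(G//K) \to \mc S(\mf a^*_\e)$. Taking the direct product of these isomorphisms over all $\ell_\delta d_\delta$ matrix positions gives a topological isomorphism of the product spaces, which by the previous paragraph coincides with $f \mapsto \mc Sf = (\mc Sf_{ij})$ as a map between $\mc S^p(G//K, Hom(V_\delta, V_\delta^M))$ and $\mc S_0(\mf a^*_\e)$. The $W$-invariance and holomorphy requirements that define $\mc S_0(\mf a^*_\e)$ are inherited entrywise from $\mc S(\mf a^*_\e)$, so the image lies in the correct space, and conversely any element of $\mc S_0(\mf a^*_\e)$ yields bi-$K$-invariant entries $f_{ij} \in \mc S^p(G//K)$ assembling into $f \in \mc S^p(G//K, Hom(V_\delta, V_\delta^M))$.

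The main conceptual point, and thus the only part that requires care, is the equivalence of the Hilbert--Schmidt seminorms with the entrywise maxima; once that is recorded everything else is a formal product argument. There is no real analytic obstacle here because the matrix size $\ell_\delta \times d_\delta$ is fixed and finite, so absorbing the constants $\sqrt{\ell_\delta d_\delta}$ into the continuity bounds is harmless. Consequently the lemma reduces, almost trivially, to the known scalar isomorphism theorem.
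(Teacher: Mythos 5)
Your proposal is correct and follows exactly the paper's intended argument: the paper disposes of this lemma in one sentence by applying Theorem \ref{Theo:SW. iso the for C valued spherical transform} to each matrix entry, which is precisely your entrywise reduction. Your write-up simply supplies the routine details (equivalence of the Hilbert--Schmidt seminorms with the entrywise ones, and the product-topology bookkeeping) that the paper leaves implicit.
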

This Lemma can be proved easily by using the conclusion of the Theorem \ref{Theo:SW. iso the for C valued spherical transform} for each matrix entry of the functions of $\mc S^p( G //K, Hom(V_\delta,V_\delta^M))$.\\
We now define the ambient space for the image of the Schwartz space $\mc S^p_\delta(X)$ under the $\delta$-spherical transform.
\begin{Definition}
\label{def: delta SW space in the image side}
Let $\mc S_\delta(\mathfrak a^*_\e)$ be the set of all $Hom(V_\delta, V_\delta^M)$ valued functions $h$ on the complex tube $\mathfrak a^*_\e$ such that
\begin{enumerate}
  \item h is holomorphic in the interior of the tube $\mathfrak a^*_\e$ and extends as a continuous function to the closed tube.
  \item  $\lambda \mapsto Q^\delta(\lambda)^{-1}~h(\lambda)$ is $W$-invariant holomorphic function the interior of the complex tube $\mf a^*_\e$.
  \item for each polynomial $P \in S(\mf a)$ and integer $n \geq 0$
\begin{equation}
\label{eq:seminorm on S delta imageside}
\nu_{P, n} (h) =\sup_{\lambda \in Int \mathfrak a^*_\e} (1 + \|\lambda\|) \left\| P\left(\frac{\partial}{\partial \lambda}\right) h(\lambda)\right\|_{\mathbf{2}} < + \infty.
\end{equation}
\end{enumerate}
\end{Definition}
Clearly, $\mc S_\delta(\mf a^*_\e) $ becomes a Fr\`{e}chet space with the topology induced by the countable family $\left\{\nu_{P, n} \right\}$ of seminorms.
\begin{Remark}
  It is easy to  observe that  the topology of the space $\mc S_\delta(\mf a^*_\e)$  can also be determined by the following equivalent family of  seminorms: for each polynomial $P \in S(\mf a)$ and each nonnegative integer $n$
\begin{equation}
\label{eq:reduced seminorm on S delta imageside}
h \mapsto {\nu}^{\#}_{P, n} (h) =\sup_{\lambda \in Int \mathfrak a^*_\e} \left|P\left( \frac{\partial}{\partial \lambda}\right) \left\{ (\langle\lambda, \lambda \rangle_1 + \|\rho\|^2 )^n h(\lambda) \right\} \right|.
\end{equation}
\end{Remark}
Let us now state the first main theorem of this paper.
\begin{Theorem}
\label{Theorem:main-1}
The $\delta$-spherical transform (\ref{eq:def. delta sp transform}) is a topological isomorphism between the Schwartz spaces $\mc S^p_\delta(X)$ and $\mc S_\delta(\mf a^*_\e)$ with $0 < p \leq 2$ and $\e= \left(\f{2}{p}- 1 \right)$.
\end{Theorem}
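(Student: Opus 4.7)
The strategy is to emulate Anker's proof in the bi-$K$-invariant setting and to reduce the present theorem to the operator-valued scalar Schwartz isomorphism of Lemma \ref{Lem:SW. iso the for op. valued spherical transform}. The bridge is supplied by Lemma \ref{Lem:5} together with (\ref{relation of gen sp fn and ele sp fn}): every $f \in \mc D_\delta(X)$ factors as $\mathbf{D}^\delta \phi$ with $\phi \in \mc D(G//K, Hom(V_\delta, V_\delta^M))$, and under this factorization the $\delta$-spherical transform becomes $\lambda \mapsto Q^\delta(\lambda)\,\mc S\phi(\lambda)$. The target space $\mc S_\delta(\mf a^*_\e)$ is tailored so that multiplication by $Q^\delta$ should map $\mc S_0(\mf a^*_\e)$ onto it (the Schwartz analogue of Lemma \ref{Lem:4}); the theorem will then follow by composing this identification with Lemma \ref{Lem:SW. iso the for op. valued spherical transform}.

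For the forward direction I would first extend the integral (\ref{eq:def. delta sp transform}) to $\mc S^p_\delta(X)$ by direct estimation. Arthur's bound (\ref{eq:pri:estimate of gen sp funct}) on $\Phi^*_{\overline\lambda,\delta}$, the estimate (\ref{eq:estimate of phi_0}) for $\varphi_0$ and the Cartan volume bound (\ref{eq:estimate of KAK Haar density}) show that the integral converges locally uniformly on the closed tube $\mf a^*_\e$; here the tube width $\e = \tfrac{2}{p}-1$ is exactly what absorbs the exponential factor $e^{c_0 \|\Im\lambda\|\,|x|}$ against the $\varphi_0^{2/p}$-decay demanded of $f$. Holomorphy in the interior follows by differentiation under the integral. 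The decay of $\widetilde{f}$ in $\lambda$ demanded by the seminorms $\nu_{P,n}$ (or, more conveniently, by the equivalent family (\ref{eq:reduced seminorm on S delta imageside})) is obtained by iterating the Laplace--Beltrami operator $\mathbf{L}$ on $f$ inside the integral and using (\ref{eq:pri:gen.sp. funct. as eigen funct. of L}) to pull out factors of $(\|\rho\|^2 + \langle\lambda,\lambda\rangle_1)$; polynomial derivatives $P(\partial/\partial\lambda)$ translate into further $\mc U(\mf g_\C)$-derivatives on $f$. The $W$-invariance of $Q^\delta(\cdot)^{-1}\widetilde{f}(\cdot)$ passes from $\mc D_\delta(X)$ (where it holds by Theorem \ref{The:Topological PW Theorem}) to $\mc S^p_\delta(X)$ by density.

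For the inverse direction, given $h \in \mc S_\delta(\mf a^*_\e)$, I set $\psi(\lambda) = Q^\delta(\lambda)^{-1}h(\lambda)$. Granting the main technical step that $\psi \in \mc S_0(\mf a^*_\e)$, Lemma \ref{Lem:SW. iso the for op. valued spherical transform} furnishes $\phi \in \mc S^p(G//K, Hom(V_\delta, V_\delta^M))$ with $\mc S\phi = \psi$, and I set $f = \mathbf{D}^\delta \phi$. Since the entries of $\mathbf{D}^\delta$ are constant coefficient elements of $\mc U(\mf g_\C)$ they preserve the Schwartz seminorms, and $\mathbf{D}^\delta \phi$ is automatically of left $\delta$-type, so $f \in \mc S^p_\delta(X)$; the identity $\widetilde{f} = h$ follows from (\ref{relation of gen sp fn and ele sp fn}) and the inversion formula (\ref{eq:inversion formuta delta sp. transform}). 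Continuity in both directions is read off from the seminorm estimates produced along the way.

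The main obstacle is showing $\psi = Q^\delta(\cdot)^{-1}h \in \mc S_0(\mf a^*_\e)$: division by $Q^\delta$ could a priori destroy the Schwartz decay on $\mf a^*_\e$. The resolution proceeds via the product formula (\ref{eq:product formula for the det of the Kostant matrix}), which factors $\det Q^\delta$ into rank-one Kostant polynomials whose zeros lie on the open lower half imaginary axis (Remark \ref{rem: remark about the zeros of Kostant matrix}); in particular $\det Q^\delta(\lambda)\neq 0$ for $\lambda \in \mf a^* + i\overline{\mf a^{*+}}$. Using the $W$-invariance of $\psi$ to restrict attention to this sub-tube (cf.\ Remark \ref{rem:Alt form of seminorms on even SW space on lambda}), one derives a polynomial lower bound on $|\det Q^\delta(\lambda)|$ and hence a polynomial pointwise bound on $Q^\delta(\lambda)^{-1}$; a Leibniz expansion of $P(\partial/\partial\lambda)$ applied to $Q^\delta(\lambda)^{-1}h(\lambda)$ then transfers the seminorms of $h$ to $\psi$ with only a finite polynomial loss, which is absorbed by the abundant decay of $h$. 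This is the genuinely delicate step, and it is precisely the point at which the structural information on $Q^\delta$ (rather than any asymptotics of elementary spherical functions) becomes indispensable.
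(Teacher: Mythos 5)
Your proposal is correct and follows essentially the same route as the paper: forward continuity by iterating $\mathbf{L}$ against the eigenfunction property of $\Phi_{\lambda,\delta}$, and surjectivity via the factorization $f=\mathbf{D}^\delta\phi$, the homeomorphism $g\mapsto Q^\delta g$ between $\mc S_0(\mf a^*_\e)$ and $\mc S_\delta(\mf a^*_\e)$ (resting on the product formula for $\det Q^\delta$ and its lower bound on $\mf a^*+i\overline{\mf a^{*+}}$), and Anker's bi-$K$-invariant isomorphism. The only notable differences are organizational: the paper runs the inverse direction on the dense Paley--Wiener subspace and extends by density (which spares it from verifying $\widetilde{\mathbf{D}^\delta\phi}=Q^\delta\,\mc S\phi$ directly at the Schwartz level), and the $\lambda$-derivatives $P(\partial/\partial\lambda)$ are actually absorbed as polynomial weights $(1+|x|)^{\deg P}$ by the Schwartz seminorms rather than becoming $\mc U(\mf g_\C)$-derivatives on $f$, though this does not affect the estimate.
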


The following proposition is a key step to prove Theorem \ref{Theorem:main-1}.
\begin{Proposition}
\label{Proposition:Lower bound for the Kostants polynomials}
For each $\delta \in \what{K}_M$, there exists a  $\delta$-dependent constant $c_\delta>0$ such that
\begin{equation}
\label{eq:Lower bound for the Kostants polynomials}
\inf_{\lambda \in (\mf a^* + i\overline{ \mf a^{*+}})} |\det Q^\delta(\lambda)| \geq c_\delta.
\end{equation}
\end{Proposition}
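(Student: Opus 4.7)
The plan is to reduce the estimate to the rank-one case via the product formula (\ref{eq:product formula for the det of the Kostant matrix}) and then exploit the known location of the zeros of the one-variable Kostant polynomials recorded in Remark \ref{rem: remark about the zeros of Kostant matrix}.

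First, I would analyze where the restriction $\lambda_\beta = \frac{\langle \lambda, \beta\rangle_1}{\langle \beta,\beta \rangle}\beta$ lies when $\lambda \in \mf a^* + i\overline{\mf a^{*+}}$. Writing $\lambda = \mu + i\nu$ with $\mu \in \mf a^*$ and $\nu \in \overline{\mf a^{*+}}$, one has $\langle \mu, \beta\rangle \in \R$ and $\langle \nu, \beta\rangle \geq 0$ because $\beta \in \Sigma^+$ is non-negative on the closed positive chamber $\overline{\mf a^{*+}}$. Hence, identifying $\mf a^*_{(\beta), \C}$ with $\C$ via the basis vector $\beta$, the scalar coordinate $z_\beta := \langle \lambda, \beta\rangle_1/\langle \beta, \beta\rangle$ of $\lambda_\beta$ lies in the closed upper half-plane $\{z \in \C : \Im z \geq 0\}$.

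Next, I invoke the rank-one fact from Remark \ref{rem: remark about the zeros of Kostant matrix}: for every $\beta \in \Sigma_0^+$ and every $1 \leq i \leq \ell_\delta$ the Kostant polynomial $Q^{\delta(i,\beta)}_\beta$ is a one-variable polynomial whose zeros all lie on the open lower imaginary axis. Thus $Q^{\delta(i,\beta)}_\beta(z) = a_{i,\beta} \prod_j (z + i\sigma_{i,\beta,j})$ with $a_{i,\beta} \neq 0$ and each $\sigma_{i,\beta,j} > 0$. For any $z$ in the closed upper half-plane,
\begin{equation*}
|z + i\sigma_{i,\beta,j}| \geq |\Im(z) + \sigma_{i,\beta,j}| \geq \sigma_{i,\beta,j} > 0,
\end{equation*}
so
\begin{equation*}
\inf_{\Im z \geq 0} |Q^{\delta(i,\beta)}_\beta(z)| \geq |a_{i,\beta}| \prod_j \sigma_{i,\beta,j} =: c_{i,\beta} > 0.
\end{equation*}

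Finally, I plug this into the product formula (\ref{eq:product formula for the det of the Kostant matrix}). Since the indexing set $\{(\beta, i) : \beta \in \Sigma_0^+, 1 \leq i \leq \ell_\delta\}$ is finite and $\mc C_\delta \neq 0$,
\begin{equation*}
\inf_{\lambda \in \mf a^* + i\overline{\mf a^{*+}}} |\det Q^\delta(\lambda)| \geq |\mc C_\delta| \prod_{\beta \in \Sigma_0^+, \, 1 \leq i \leq \ell_\delta} c_{i,\beta} =: c_\delta > 0,
\end{equation*}
which is the desired bound. The argument is essentially a compilation: the only non-trivial analytic input, namely the pinning of the zeros of each rank-one Kostant polynomial to the open lower imaginary axis, is already taken as given. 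The only thing to be careful about is the correspondence between the tube $\mf a^* + i\overline{\mf a^{*+}}$ in $\mf a^*_\C$ and the closed upper half-plane in each rank-one factor, which is handled in the first paragraph using positivity of $\beta$ on $\overline{\mf a^{*+}}$; this is the step where, in principle, an oversight could occur.
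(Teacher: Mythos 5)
Your proof is correct and follows essentially the same route as the paper: reduce via the product formula (\ref{eq:product formula for the det of the Kostant matrix}) to the rank-one Kostant polynomials, check that the restriction $\lambda_\beta$ lands in the closed upper half-plane (equivalently $\mf a_{(\beta)}^* + i\overline{\mf a_{(\beta)}^{*+}}$), and bound each of the finitely many factors below. Your explicit factorization estimate $|z+i\sigma|\geq\sigma$ is in fact slightly more careful than the paper's own step, which deduces the positive infimum of each rank-one factor merely from nonvanishing on a closed (but unbounded) set and so implicitly relies on the polynomial's growth at infinity.
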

\begin{proof}
To prove this we use the product formula (\ref{eq:product formula for the det of the Kostant matrix}) of $\det Q^\delta(\lambda)$. Each of the factors $Q^{\delta(i, \beta)}_\beta$ is the Kostant polynomials of he rank-one restrictions $G_\beta/ K_\beta$ and hence it is a polynomial in one complex variable $\lambda_\beta \in {\mf a^*_{(\beta)}}_\C \cong \C$.\\
It is easy to check that $\lambda \in \mf a^* + i\overline{\mf a^{*+}}$ if and only if $\Re{\langle i \lambda, \alpha \rangle} \leq 0$ for all $\alpha \in \Sigma^+$. Using the definition of the restriction $\lambda_\beta  = \frac{\langle \lambda, \beta \rangle_1}{\langle \beta, \beta \rangle} \beta$  we get
\begin{align}
\label{relation of Killing form and its restriction}
\langle i \lambda_\beta, \beta \rangle_\beta & = \left\langle i \frac{\langle \lambda, \beta \rangle_1}{\langle \beta, \beta \rangle} \beta, \beta  \right\rangle_\beta
 = \langle i \lambda, \beta \rangle
\end{align}
where $\langle \cdot, \cdot \rangle_\beta$ denotes the inner product on $\mf a^*_{(\beta)}$ as well as its $\C$-bilinear extension to the complexification ${\mf a^*_{(\beta)}}_\C$. Now (\ref{relation of Killing form and its restriction}) clearly suggests that if $\lambda \in \mathfrak a^* + i \overline{\mathfrak a^{*+}}$ then for each $\beta \in \Sigma_0^+$
 the restriction $\lambda_\beta \in \mathfrak a_{(\beta)}^* + i \overline{\mathfrak a_{(\beta)}^{*+}}$. As the polynomial $Q^{\delta(i, \beta)}_\beta(\lambda_\beta ) \neq 0$ for all $\lambda_\beta \in \mathfrak a_{(\beta)}^* + i \overline{\mathfrak a_{(\beta)}^{*+}}$ and $\mathfrak a_{(\beta)}^* + i \overline{\mathfrak a_{(\beta)}^{*+}}$ being a closed subset of $\C$ so we get a positive constant $c_{\delta(i, \beta)}$ such that $$\inf_{\lambda_\beta \in \mathfrak a_{(\beta)}^* + i \overline{\mathfrak a_{(\beta)}^{*+}}}|Q^{\delta(i, \beta)}_\beta(\lambda_\beta)| \geq c_{\delta(i, \beta)}. $$
Since corresponding to each $\delta \in \what{K}_M$ the product formula (\ref{eq:product formula for the det of the Kostant matrix}) of $\det Q^\delta(\lambda)$ has only finitely many factors hence we get the desired conclusion of the proposition.
\end{proof}
The next lemma extends the homeomorphism given in Lemma \ref{Lem:4} between the P-W spaces to the corresponding Schwartz classes.
\begin{Lemma}
\label{Lem:S-delta and S-0 homeomorphic}
  The map
\begin{equation}
\label{eq:homeo betwn sw spaces}
g(\lambda) \mapsto Q^\delta(\lambda) g(\lambda),~~\mbox{~~for all ~} \lambda \in \mathfrak a^*_\e,
\end{equation}
is a homeomorphism from the space $\mc S_0(\mathfrak a^*_\e)$ onto  $\mc S_\delta(\mathfrak a^*_\e)$.
\end{Lemma}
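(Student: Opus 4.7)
The plan is to verify separately that the forward map $g \mapsto Q^\delta g$ sends $\mc S_0(\mf a^*_\e)$ into $\mc S_\delta(\mf a^*_\e)$ and that its candidate inverse $h \mapsto Q^\delta(\lambda)^{-1} h(\lambda)$ goes the other way, checking in each case the axioms of the target Schwartz space together with continuity of the seminorms. The whole argument rests on two facts already in hand: the entries of $Q^\delta$ are constant-coefficient polynomials in $\lambda$, and Proposition \ref{Proposition:Lower bound for the Kostants polynomials} supplies the uniform lower bound $|\det Q^\delta(\lambda)| \geq c_\delta > 0$ on $\mf a^* + i\overline{\mf a^{*+}}$.

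\emph{Forward direction.} Given $g \in \mc S_0(\mf a^*_\e)$, set $h(\lambda) := Q^\delta(\lambda) g(\lambda)$. Holomorphy on the interior and continuity on the closure of $\mf a^*_\e$ pass through since $Q^\delta$ has polynomial entries. Condition (ii) of Definition \ref{def: delta SW space in the image side} is automatic: $Q^\delta(\lambda)^{-1} h(\lambda) = g(\lambda)$ is $W$-invariant and holomorphic on the interior of the tube by hypothesis on $g$. For the decay, use the polynomial bound $\|Q^\delta(\lambda)\|_{\mathbf{2}} \leq C(1+\|\lambda\|)^N$ and apply the Leibniz rule to $P(\partial/\partial\lambda)(Q^\delta g)$; this bounds every seminorm $\nu_{P,n}(h)$ by a finite linear combination of seminorms $\tau^+_{P',n'}(g)$, which simultaneously establishes that $h \in \mc S_\delta(\mf a^*_\e)$ and that the forward map is continuous.

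\emph{Inverse direction.} Given $h \in \mc S_\delta(\mf a^*_\e)$, define $g(\lambda) := Q^\delta(\lambda)^{-1} h(\lambda)$. Condition (ii) of Definition \ref{def: delta SW space in the image side} gives a priori that $g$ is $W$-invariant and holomorphic on the interior of $\mf a^*_\e$. The work is in the seminorm estimate. On the subregion $\mf a^*_\e \cap (\mf a^* + i\overline{\mf a^{*+}})$, Cramer's rule yields $Q^\delta(\lambda)^{-1} = (\det Q^\delta(\lambda))^{-1}\operatorname{adj}(Q^\delta(\lambda))$; the adjugate has polynomial entries, and Proposition \ref{Proposition:Lower bound for the Kostants polynomials} controls the determinant from below, so $\|Q^\delta(\lambda)^{-1}\|_{\mathbf{2}} \leq C_\delta(1+\|\lambda\|)^M$ on this region. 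The identity $\partial_j(Q^{-1}) = -Q^{-1}(\partial_j Q)Q^{-1}$, iterated, gives analogous polynomial bounds for every $P(\partial/\partial \lambda)Q^\delta(\lambda)^{-1}$. Expanding $P(\partial/\partial\lambda)\bigl(Q^\delta(\lambda)^{-1}h(\lambda)\bigr)$ by Leibniz and using the rapid decay of $h$ then bounds $\tau^+_{P,n}(g)$ restricted to the positive region by a finite combination of $\nu_{P',n'}(h)$. The passage to the full tube is free: using the $W$-invariance of $g$, the Weyl-invariance of $\|\cdot\|$ on $\mf a^*_\C$, and the fact that $W$ permutes the chambers while fixing $\mf a^*_\e$ setwise, one promotes the bound to all of $\mf a^*_\e$ --- precisely the principle exploited in Remark \ref{rem:Alt form of seminorms on even SW space on lambda}. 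Continuity of $g$ up to the boundary follows for the same reason.

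\emph{Main obstacle.} The delicate point is the control of $Q^\delta(\lambda)^{-1}$ and its derivatives on the closed tube $\mf a^*_\e$: the matrix $Q^\delta$ is not itself $W$-invariant and $\det Q^\delta$ admits no a priori global lower bound on $\mf a^*_\e$. The strategy to circumvent this is to obtain the estimates first on the chamber $\mf a^* + i\overline{\mf a^{*+}}$, where Proposition \ref{Proposition:Lower bound for the Kostants polynomials} gives the required uniform nonvanishing, and then to transport them to the whole tube using the Weyl-invariance of the composite function $g = Q^\delta(\lambda)^{-1}h(\lambda)$ guaranteed by axiom (ii) of $\mc S_\delta(\mf a^*_\e)$.
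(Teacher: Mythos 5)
Your proposal is correct and follows essentially the same route as the paper: the forward direction via polynomial bounds on $Q^\delta$ and the Leibniz rule, and the inverse direction via Cramer's rule together with the uniform lower bound on $\det Q^\delta$ from Proposition \ref{Proposition:Lower bound for the Kostants polynomials}, with the restriction to $\mf a^* + i\overline{\mf a^{*+}}$ transported to the full tube by $W$-invariance (which the paper builds into the seminorms $\tau^+_{P,n}$ of $\mc S_0(\mf a^*_\e)$ via Remark \ref{rem:Alt form of seminorms on even SW space on lambda}). The only cosmetic difference is that you differentiate $Q^\delta(\lambda)^{-1}$ via the iterated identity $\partial(Q^{-1})=-Q^{-1}(\partial Q)Q^{-1}$ while the paper expands $(\det Q^\delta)^{-1}Q^\delta_c\psi$ directly; the estimates obtained are the same.
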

\begin{proof}
  Let us first take $g \in \mc S_0(\mathfrak a^*_\e)$. We denote $h(\cdot) = Q^\delta(\cdot) g(\cdot)$. We shall show that $h \in \mc S_\delta(\mathfrak a^*_\e)$. As $Q^\delta(\lambda)$ is an ${\ell_\delta} \times {\ell_\delta}$ matrix of polynomials in $\lambda \in \mathfrak a^*_\C$ so  $\lambda \mapsto Q^\delta(\lambda)$ is a holomorphic function on $\mathfrak a^*_\C$.
Hence the function $h$ satisfies condition (i) and (ii) of Definition \ref{def: delta SW space in the image side} which easily follows from the similar properties of $g \in \mc S_0(\mf a_\e)$ and the construction (\ref{eq:homeo betwn sw spaces}) of the function $h$.\\
To establish the decay condition (\ref{eq:seminorm on S delta imageside}) for $h$ let us take a polynomial $P \in S(\mf a)$ and $m \in \Z^+ \cup \{ 0\}$. Then
\begin{align}
\label{ali:1}
\sup_{\lambda \in Int\mathfrak a^*_\e} &\left\|P\left(\frac{\partial}{\partial \lambda}\right) h(\lambda) \right\|_{\mathbf{2}} (1+ \|\lambda\|)^m \nonumber \\
& \leq \sup_{\lambda \in Int\mathfrak a^*_\e} \sum_\kappa c_\kappa \left\|\left\{P'_\kappa\left(\frac{\partial}{\partial \lambda}\right)Q^\delta(\lambda) \right\} \left\{P_\kappa\left(\frac{\partial}{\partial \lambda}\right)g (\lambda)\right\} \right\|_{\mathbf{2}} (1 + \|\lambda\|)^m \nonumber\\
%\intertext{the above line follows by the simple application of Leibniz rule,  where $P_\kappa, P'_\kappa$ are some polynomials and the sum is over a finite set,}
& \leq \sup_{\lambda \in Int\mathfrak a^*_\e} \sum_{\kappa} c_\kappa \left\|\left\{ P'_\kappa \left(\frac{\partial}{\partial \lambda}\right)Q^\delta(\lambda) \right\}\right\|_{\mathbf{2}} \left\| \left\{P_\kappa \left(\frac{\partial}{\partial \lambda}\right)g (\lambda) \right\}\right\|_{\mathbf{2}} (1 + \|\lambda\|)^m \nonumber\\
& \leq \sup_{\lambda \in Int\mathfrak a^*_\e} \sum_\kappa c_\kappa^
\delta \left\| \left\{P_\kappa\left(\frac{\partial}{\partial \lambda}\right)g \right\}(\lambda) \right\|_{\mathbf{2}} (1 + \|\lambda\|)^{m_\kappa^\delta}
\end{align}
where, $m_\kappa^\delta$ are nonnegative integers and $c_\kappa^\delta$ are positive constants both depending on $\delta \in \widehat{K}_M$. As $g \in \mc S_0(\mathfrak a^*_\e)$,  the right hand side of (\ref{ali:1}) is clearly finite. Moreover, (\ref{ali:1}) shows that the map (\ref{eq:homeo betwn sw spaces}) is a continuous function from $\mc S_0(\mathfrak a^*_\e)$ into $\mc S_\delta(\mathfrak a^*_\e)$.\\
Now let $\psi \in \mc S_\delta(\mathfrak a^*_\e)$ and define $g(\cdot) := Q^\delta(\cdot)^{-1} \psi(\cdot)$.
 As, $\psi \in S_\delta(\mf a^*_\e)$, by Definition \ref{def: delta SW space in the image side} the function
 $$ \lambda \mapsto g(\lambda )= \frac{1}{\det Q^\delta(\lambda)} Q^\delta_c(\lambda) \psi(\lambda)$$
 (here, $Q^\delta_c(\lambda)$ is the cofactor matrix of $Q^\delta(\lambda)$) is $W$-invariant and it is holomorphic
in the interior of the tube $\mf a^*_\e$. To infer $g \in \mc S_0(\mf a^*_\e)$ all we have to show is that the function
$g$ has certain decay.  Let $P \in S(\mf a)$ and $t$ be any nonnegative integer, then
\begin{align}
\label{ali:imp1}
\left\{P\left(\f{\partial}{\partial \lambda}\right) g(\lambda)  \right\}  &= \left\{P\left(\f{\partial}{\partial \lambda}\right) \f{1}{\det Q^\delta(\lambda)} Q^\delta_c(\lambda) \psi(\lambda)  \right\} \nonumber\\
&= \sum_\kappa \f{P_\kappa\left(\f{\partial}{\partial \lambda}\right) Q^\delta_c(\lambda) P'_\kappa\left(\f{\partial}{\partial \lambda} \right) \psi(\lambda)}{\left(\det Q^\delta(\lambda) \right)^{m_\kappa}}.
\end{align}
The last line of (\ref{ali:imp1}) follows by an easy application of the Leibniz rule. Here $P_\kappa, P'_\kappa$ are finite degree polynomials, $m_{\kappa}$ is a positive integer depending on $\kappa$ and the sum is over a finite set. From (\ref{ali:imp1}) we get:
\begin{align}
\label{ali:imp2}
\left\|P\left(\f{\partial}{\partial \lambda}\right) g(\lambda)  \right\|_{\mathbf{2}} & \leq \sum_\kappa \f{\|P_\kappa\left(\f{\partial}{\partial \lambda}\right) Q^\delta_c(\lambda)\|_{\mathbf{2}} \|P'_\kappa\left(\f{\partial}{\partial \lambda} \right) \psi(\lambda)\|_{\mathbf{2}}}{|\det Q^\delta(\lambda) |^{m_\kappa}}  \nonumber\\
&\leq c(\delta)  \sum_\kappa \f{ \|P'_\kappa\left(\f{\partial}{\partial \lambda} \right) \psi(\lambda)\|_{\mathbf{2}}}{|\det Q^\delta(\lambda) |^{m_\kappa}} (1 + \|\lambda\|)^{n_\kappa}.
\end{align}
The above inequality is obtained by using the fact that: $\|P_\kappa\left(\f{\partial}{\partial \lambda}\right) Q^\delta_c(\lambda)\|_{\mathbf{2}} \leq c(\delta) (1 + \|\lambda \|)^{n_\kappa}$ where $c(\delta)>0$ is a $\delta$-dependent constant and $n_\kappa$ is a positive integer depending on the degree of $P_\kappa$ (it may also depend on $\delta$).
Now from (\ref{ali:imp2}) we get the following inequality for any nonnegative integer $t$.
\begin{align}
\label{ali:imp3}
\sup_{Int(\mf a^*_\e \cap (\mf a^*+ i \overline{\mf a^{*+}}))}\left\|P\left(\f{\partial}{\partial \lambda} \right) g(\lambda) \right\|_{\mathbf{2}} (1+ \|\lambda\|)^t &\leq \sum_{\kappa} \sup_{Int(\mf a^*_\e \cap (\mf a^*+ i \overline{\mf a^{*+}}))} \hspace{-.3in}\f{\left\|P'_\kappa\left(\f{\partial}{\partial \lambda} \right) \psi(\lambda) \right\|_{\mathbf{2}} (1+ \|\lambda\|)^{t+n_\kappa}}{|\det Q^\delta(\lambda)|^{m_\kappa}}\nonumber\\
&\leq \sum_{\kappa} \f{ \sup_{\lambda \in \mf a^*_\e} \left\|P'_\kappa\left(\f{\partial}{\partial \lambda} \right) \psi(\lambda) \right\|_{\mathbf{2}} (1+ \|\lambda\|)^{t+n_\kappa}}{ \inf_{\lambda \in Int(\mf a^*_\e \cap (\mf a^*+ i \overline{\mf a^{*+}}))}|\det Q^\delta(\lambda)|^{m_\kappa}}\nonumber\\
&\leq \sum_{\kappa} \f{1}{c_\delta^\kappa} \sup_{\lambda \in \mf a^*_\e} \left\|P'_\kappa\left(\f{\partial}{\partial \lambda} \right) \psi(\lambda) \right\|_{\mathbf{2}} (1+ \|\lambda\|)^{t+n_\kappa}.
\end{align}
The last line of the above successive inequalities is a consequence of Proposition \ref{Proposition:Lower bound for the Kostants polynomials}. As $\psi \in \mc S_\delta(\mf a^*_\e)$ so each term of the finite summation on the right hand side of (\ref{ali:imp3}) is finite. Hence we conclude that $g$ satisfies the decay (\ref{eq:op valued Sw. space decay reduced on lambda}) of the space $\mc S_0(\mf a^*_\e)$. The inequality  (\ref{ali:imp3}) further concludes that the continuous  map (\ref{eq:homeo betwn sw spaces}) from $\mc S_0(\mf a^*_\e)$ onto $\mc S_\delta(\mf a^*_\e)$ is injective and also its inverse map is continuous. As both the spaces $\mc S_0(\mf a^*_\e)$ and $\mc S_\delta(\mf a^*_\e)$ are Fr\`{e}chet spaces so the map (\ref{eq:homeo betwn sw spaces}) is a homeomorphism.
\end{proof}
\begin{Lemma}
\label{Lem:delta PW is dence in delta SW space}
  The Paley-Wiener space $\mathcal P^\delta(\mathfrak a^*_\C)$, defined in (\ref{eq:PW space}), is a dense subspace of the Schwartz space $\mc S_\delta(\mathfrak a^*_\e)$.
\end{Lemma}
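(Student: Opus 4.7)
The plan is to reduce the density statement, via the two multiplication-by-$Q^\delta$ homeomorphisms already available, to the analogous density statement for the scalar bi-$K$-invariant situation, and then to transport the scalar density across the spherical Schwartz space isomorphism.

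First I would invoke Lemma \ref{Lem:S-delta and S-0 homeomorphic}, which identifies $\mc S_\delta(\mf a^*_\e)$ with $\mc S_0(\mf a^*_\e)$ via the map $g\mapsto Q^\delta g$, together with Lemma \ref{Lem:4}, which identifies the Paley-Wiener space $\mc P^\delta(\mf a^*_\C)$ with $\mc P^\delta_0(\mf a^*_\C)$ via the same map. Because both are homeomorphisms of Fr\'echet spaces and they are restrictions of one another, it suffices to prove that $\mc P^\delta_0(\mf a^*_\C)$ is dense in $\mc S_0(\mf a^*_\e)$.

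Next I would reduce this matrix-valued density to the scalar density by working entry-wise. A function in $\mc S_0(\mf a^*_\e)$ is just an $\ell_\delta\times d_\delta$ matrix whose entries lie individually in $\mc S(\mf a^*_\e)$, and a function in $\mc P^\delta_0(\mf a^*_\C)$ is a matrix whose entries lie individually in the scalar Paley-Wiener space $\mc P_{W}(\mf a^*_\C)$ of $W$-invariant, entire, exponential-type functions. Since the Fr\'echet topology on $\mc S_0$ is determined matrix-entrywise by the seminorms $\tau_{P,r}$, it is enough to establish that $\mc P_{W}(\mf a^*_\C)$ is a dense subspace of $\mc S(\mf a^*_\e)$.

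For the scalar density statement I would use the Trombi-Varadarajan isomorphism (Theorem \ref{Theo:SW. iso the for C valued spherical transform}): the spherical transform is a topological isomorphism from $\mc S^p(G//K)$ onto $\mc S(\mf a^*_\e)$, and the classical Paley-Wiener theorem says it sends $\mc D(G//K)$ onto $\mc P_{W}(\mf a^*_\C)$. Since $\mc D(G//K)$ is dense in $\mc S^p(G//K)$ (a fact already stated before Definition \ref{def:Classical L-p Schwartz space}), applying the homeomorphism transports the density onto the image side and gives $\overline{\mc P_{W}(\mf a^*_\C)}=\mc S(\mf a^*_\e)$, which is exactly what is needed. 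The main conceptual point to verify carefully is the compatibility between the seminorms on the image side of the scalar isomorphism and the entrywise seminorms defining $\mc S_0(\mf a^*_\e)$; this is routine because the equivalent form of the seminorms in Remark \ref{rem:Alt form of seminorms on even SW space on lambda} is literally entrywise. No further obstacle is expected, since the only nontrivial analytic input (density of compactly supported smooth functions in the $L^p$-Schwartz space, and the scalar isomorphism theorem) is already cited in the paper.
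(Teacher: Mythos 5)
Your proposal is correct and follows essentially the same route as the paper: both arguments transport the problem through the multiplication-by-$Q^\delta$ homeomorphisms of Lemma \ref{Lem:4} and Lemma \ref{Lem:S-delta and S-0 homeomorphic}, reduce to the matrix-entrywise scalar statement, and invoke the known density of the $W$-invariant Paley--Wiener space in $\mc S(\mf a^*_\e)$ (which the paper simply cites from Gangolli--Varadarajan rather than re-deriving from Theorem \ref{Theo:SW. iso the for C valued spherical transform} as you do). Your use of the homeomorphism of Lemma \ref{Lem:S-delta and S-0 homeomorphic} to get the convergence of $Q^\delta g_n$ for free is a slightly cleaner packaging of the explicit seminorm estimate the paper carries out, but it is the same argument.
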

\begin{proof}
 Let us take any $H \in \mc S_\delta(\mathfrak a^*_\e)$, it is enough to show that, there is a sequence $\{G_n\}$ ($G_n \in \mathcal P(\mathfrak a^*_\C)$) converging to $H$ in the topology of the space $\mc S_\delta(\mathfrak a^*_\e)$. Let $H(\lambda) = \left( H_{ij}(\lambda)\right)_{{\ell_\delta} \times d_\delta}$. By  the isomorphism obtained in Lemma \ref{Lem:S-delta and S-0 homeomorphic}, we get one unique $G \in \mc S_0(\mathfrak a^*_\e)$ such that
\begin{align}
\label{ali:5}
H(\lambda) = \left( H_{ij}(\lambda)\right)_{{\ell_\delta} \times d_\delta}& = Q^\delta(\lambda) G(\lambda) \nonumber,\\
&= Q^\delta(\lambda) \left( G_{ij}(\lambda)\right)_{{\ell_\delta} \times d_\delta} \nonumber,\\
&= \left( \sum_{k=1}^{{\ell_\delta}} Q^\delta(\lambda)_{ik} G_{kj}(\lambda) \right)_{{\ell_\delta} \times d_\delta}.
\end{align}
As $G \in \mc S_0(\mathfrak a^*_\e)$, so from the definition of the Schwartz space $\mc S_0(\mathfrak a^*_\e)$  it follows that the matrix entry functions $G_{ij} \in \mc S(\mathfrak a^*_\e)$ for each $1 \leq i \leq {\ell_\delta}$ and $1 \leq j \leq d_\delta$. We know that the Paley-Wiener space $\mathcal P(\mathfrak a^*_\C)$ under the spherical transform is dense in the Schwartz class $\mc S(\mathfrak a^*_\e)$ \cite{Gangolli88}. Therefore we can get a sequence $\left\{{g_{ij}}_n \right\}_n \subset \mathcal P(\mathfrak a^*_\C)$ converging to $G_{ij}$ in $\mc S(\mathfrak a^*_\e)$. As, each $Q^\delta_{ik}(\lambda)$ ($1 \leq i \leq {\ell_\delta}, 1 \leq k \leq d_\delta$) is a polynomial in $\lambda$, so the sequence $\left\{\sum_{k=1}^{\ell_\delta} Q^\delta_{ik}(\lambda) {g_{ij}}_n(\lambda) \right\}_n$ converges to $\sum_{k=1}^{\ell_\delta} Q^\delta_{ik}(\lambda)  G_{kj}(\lambda)$ (for each $\lambda$) in $\mc S(\mathfrak a^*_\e)$.\\
Let $g_n(\lambda) = \left({g_{ij}}_n(\lambda) \right)_{{\ell_\delta} \times d_\delta}$. As each ${g_{ij}}_n \in \mathcal P(\mathfrak a^*_\C)$, so  from the definition it follows that, the matrix valued function $g_n \in \mathcal P^\delta_0(\mathfrak a^*_\C) $. Clearly by Lemma \ref{Lem:4} for each natural number $ n $, $Q^\delta(\cdot) g_n(\cdot) \in \mathcal P^\delta(\mathfrak a^*_\C)$.  Let $P$ be any polynomial in $S(\mf a)$ and $t$ be any nonnegative integer then:
\begin{align}
\label{ali:6}
&\tau_{P,t}\left(Q^\delta(\cdot)g_n(\cdot) - Q^\delta(\cdot)G(\cdot) \right) \nonumber\\
&= \sup_{\lambda \in Int \mathfrak a^*_\e} \left\|P\left(\frac{\partial}{\partial \lambda}\right) \left\{ Q^\delta(\lambda) g_n(\lambda) - Q^\delta(\lambda) G(\lambda)\right\} \right\|_{\mathbf{2}} (1+ \|\lambda\|)^t \nonumber,\\
&= \sup_{\lambda \in Int \mathfrak a^*_\e} \sum_{i=1}^{\ell_\delta} \sum_{j= 1}^{d_\delta} \left\|P\left(\frac{\partial}{\partial \lambda}\right) \sum_{k=1}^{{{\ell_\delta}}} \left\{Q^\delta(\lambda)_{ik} {g_{ik}}_n(\lambda) - Q^\delta(\lambda)_{ik} G_{ik}(\lambda) \right\} \right\|^2 (1+ \|\lambda\|)^t.
\end{align}
A suitable choice of $n$ can made the right hand side of (\ref{ali:6}) arbitrarily small. Hence we get the sequence $\left\{ Q^\delta(\cdot) g_n \right\}_n$ in $\mathcal P^\delta(\mathfrak a^*_\C)$ converging to $H$ in the topology of $\mc S_\delta(\mathfrak a^*_\e)$. This completes the proof of the Lemma.
\end{proof}
Next we shall try to extend the definition  of the $\delta$-spherical transform (\ref{eq:def. delta sp transform}) to the Schwartz class $\mc S^p_\delta(X)$ where $0 < p \leq 2$.
\begin{Lemma}
\label{Lem:analytic in the interior}
  For each $f \in \mc S^p_\delta(X)$, the function $\lambda \mapsto \widetilde{f}(\lambda)$, where $\widetilde{f}$ is given by (\ref{eq:def. delta sp transform}), is a holomorphic function in the interior of the complex tube $\mathfrak a^*_\e$.
\end{Lemma}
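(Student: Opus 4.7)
The plan is standard: show the defining integral for $\widetilde{f}(\lambda)$ converges absolutely and uniformly on compact subsets of the interior of the tube $\mathfrak{a}^*_\e$, and then combine this with the entirety of $\lambda \mapsto \Phi_{\overline{\lambda},\delta}^*(x)$ (for each fixed $x$, by Proposition \ref{prop:pri:basic properties of gen, sp. funct.}(i)) via Morera's theorem (or dominated convergence applied to difference quotients) to conclude holomorphy.

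The concrete steps I would carry out are as follows. First, for $f \in \mc S^p_\delta(X)$, the scalar function $\mathrm{tr}\, f$ lies in $\mc S^p(\check{\delta},X)$ and consequently satisfies the pointwise bound $|\mathrm{tr}\, f(x)| \leq C_N \,\varphi_0(x)^{2/p}(1+|x|)^{-N}$ for every $N\in\Z^+$, with $C_N$ a Schwartz seminorm of $f$. Second, from (\ref{eq:pri:estimate of gen sp funct}) applied with $\textbf{g}_1=\textbf{g}_2 = 1$, and using $\|A^*\|_{\mathbf 2} = \|A\|_{\mathbf 2}$, one has
\[
\|\Phi_{\overline{\lambda},\delta}^*(x)\|_{\mathbf 2} \leq c(1+|\delta|)^b (1+\|\lambda\|)^b \,\varphi_0(x)\, e^{c_0\|\Im\lambda\|(1+|x|)}.
\]
Passing to the Cartan decomposition (\ref{eq:Haar measure KAK}) and using the estimates $\Delta(H) \leq c\,e^{2\rho(H)}$ from (\ref{eq:estimate of KAK Haar density}) and the standard form of (\ref{eq:estimate of phi_0}) giving $\varphi_0(\exp H) \leq c(1+\|H\|)^{c_{\mathfrak a}}e^{-\rho(H)}$ on $\overline{\mathfrak a^+}$, the absolute integral is dominated by
\[
C(1+\|\lambda\|)^b \int_{\overline{\mathfrak a^+}} (1+\|H\|)^{A-N}\, e^{-\e \rho(H)+c_0\|\Im\lambda\|(1+\|H\|)}\, dH
\]
for some fixed power $A$ depending only on $\delta$ and $p$, with $\e = 2/p - 1$.

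The main obstacle is controlling this last integral uniformly as $\lambda$ ranges over a compact subset $K_0$ of $\mathrm{Int}(\mathfrak a^*_\e)$. For this I exploit the fact that $\rho$ lies in the open dual chamber $\mathfrak a^{*+}$, so $\rho(H) \geq c_\rho\|H\|$ on $\overline{\mathfrak a^+}$ for some $c_\rho>0$; combined with the sharper form of the estimate on the generalized spherical function (replacing $\|\Im\lambda\|(1+|x|)$ by $\max_{\omega\in W} \omega(\Im\lambda)(H)$ for $x = \exp H \in \overline{A^+}$, which is what actually holds for $\Phi_{\lambda,\delta}^*$ by Arthur's argument), the strict inclusion $\Im\lambda \in C^{\e\rho}$ forces a definite exponential gap $e^{-(\e\rho(H) - \max_\omega \omega(\Im\lambda)(H))}$ that is summable against the polynomial factor after choosing $N$ large enough. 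Alternatively, using the cruder bound as stated, one first obtains convergence on a smaller sub-tube $\{\|\Im\lambda\| < \e c_\rho/c_0\}$ and then extends to the full $\mathrm{Int}(\mathfrak a^*_\e)$ via the Phragm\'en--Lindel\"of / shift-of-contour argument available from the isotropy properties of $\Phi_{\overline{\lambda},\delta}^*$ (this is precisely the kind of refinement Anker's method is designed to bypass, but here we only need continuity/holomorphy, not the sharp Schwartz estimate).

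Once uniform convergence on compacta of $\mathrm{Int}(\mathfrak a^*_\e)$ is in hand, the conclusion is immediate: for any triangle (or small circle) $T$ in a complex line inside $\mathrm{Int}(\mathfrak a^*_\e)$, Fubini gives
\[
\oint_T \widetilde{f}(\lambda)\,d\lambda = d_\delta\int_G \mathrm{tr}\, f(x)\left(\oint_T \Phi_{\overline{\lambda},\delta}^*(x)\, d\lambda\right) dx = 0
\]
because the inner integral vanishes by Cauchy's theorem, using Proposition \ref{prop:pri:basic properties of gen, sp. funct.}(i). Morera's theorem, applied matrix-entry-wise, therefore yields the holomorphy of $\widetilde{f}$ on $\mathrm{Int}(\mathfrak a^*_\e)$.
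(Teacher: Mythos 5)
Your proof is correct and follows essentially the same route as the paper: absolute convergence of the defining integral, obtained from the Schwartz decay of $\mathrm{tr}\,f$ together with the estimate (\ref{eq:pri:estimate of gen sp funct}) for $\Phi^*_{\overline{\lambda},\delta}$, followed by Fubini and Morera applied matrix-entry-wise using the entirety of $\lambda\mapsto\Phi^*_{\overline{\lambda},\delta}(x)$. The only difference is that you spell out --- and correctly flag the need to sharpen the exponential factor $e^{c_0\|\Im\lambda\|(1+|x|)}$ in --- the convergence estimate that the paper compresses into a ``standard argument'' with a citation.
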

\begin{proof}
For each $f \in \mc S^p_\delta(X)$ the function $x \mapsto trf(x)$ has the following decay: ~for each $D, E \in \mathcal U(\mathfrak g_\C)$ and integer $n \geq 0$
\begin{equation}
\label{eq:2}
\sup_{x \in G} |trf(D, x, E)| (1 + |x|)^n \varphi_0^{-\frac{2}{p}}(x) < + \infty,
\end{equation}
which follows easily from (\ref{eq:op-valued delta Sw-space decay}) and the fact that $|trf(x)| \leq \|f(x)\|_{\mathbf{2}}$ ( $x \in X$). Using (\ref{eq:2}) and the estimate (\ref{eq:pri:estimate of gen sp funct}) of the generalized spherical functions one can show, by following a standard argument (see \cite[$\S$6.2]{Gangolli88},~\cite{Eguchi76}), that the $\delta$-spherical transform, defined by the integral (\ref{eq:def. delta sp transform}), exists for $\lambda \in \mathfrak a^*_\e$. \\
Let $\gamma$ be a closed curve in the interior of the tube $\mathfrak a^*_\e$. Then for $f \in \mc S^p_\delta(X)$ we get
\begin{align*}
\int_{\gamma} \widetilde{f}(\lambda) d \lambda &= d_\delta \int_{\gamma} \left\{\int_G tr f(x) \Phi_{\overline{\lambda}, \delta} (x) dx \right\} d \lambda.
\end{align*}
As the integral within braces exists absolutely for $\lambda \in \mathfrak a^*_\e$, so we apply Fubini's theorem to get:~$\int_{\gamma} \widetilde{f}(\lambda) d \lambda
= d_\delta \int_G  tr f(x) \left\{ \int_{\gamma} \Phi_{\overline{\lambda}, \delta} (x) d \lambda \right\} dx$.
We also recall that the functions $\lambda \mapsto \Phi_{\overline{\lambda}, \delta} (\cdot)$ are entire. Hence by an application of Morera's theorem the desired conclusion of the lemma follows.
\end{proof}
%------------------------------------------------------------------------------------------------------------------
\section{\textbf{ Proof of Theorem \ref{Theorem:main-1}}}
\label{sec:Proof-Theorem-main}
\setcounter{equation}{0}
To show that the $\delta$-spherical transform is a topological isomorphism it is enough to show that it is a continuous surjection from $\mc S^p_\delta(X)$ onto $\mc S_\delta(\mf a^*_\e)$.
\begin{Lemma}
\label{Lem:continuity of the delta spherical transform}
  The $\delta$-spherical transform $f \mapsto \widetilde{f}$ is a continuous map from the Schwartz space $\mc S^p_\delta(X)$ into $\mc S_\delta(\mathfrak a^*_\e)$.
\end{Lemma}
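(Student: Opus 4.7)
The plan is, for each seminorm $\nu_{P,n}$ on $\mc S_\delta(\mf a^*_\e)$, to dominate $\nu_{P,n}(\widetilde f)$ by a continuous seminorm on $\mc S^p_\delta(X)$. It is convenient to work with the equivalent family $\nu^{\#}_{P,n}$ from (\ref{eq:reduced seminorm on S delta imageside}), since the factor $(\|\rho\|^2+\langle\lambda,\lambda\rangle_1)^n$ meshes cleanly with the Laplace--Beltrami eigenequation for $\Phi^*_{\overline\lambda,\delta}$. Before estimating, I would verify conditions (i) and (ii) of Definition \ref{def: delta SW space in the image side} for $\widetilde f$. Holomorphicity on $\mbox{Int }\mf a^*_\e$ is Lemma \ref{Lem:analytic in the interior}, and continuous extension to the closed tube follows from the same dominated-convergence argument applied up to the boundary. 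The $W$-invariance of $\lambda \mapsto Q^\delta(\lambda)^{-1}\widetilde f(\lambda)$ is best deduced by density: for any sequence $f_n \in \mc D_\delta(X)$ with $f_n \to f$ in $\mc S^p_\delta(X)$, Theorem \ref{The:Topological PW Theorem} ensures each $\widetilde{f_n}\in \mc P^\delta(\mf a^*_\C)$, so $Q^\delta(\lambda)^{-1}\widetilde{f_n}(\lambda)$ is $W$-invariant and entire; the pointwise limit preserves this symmetry.

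For the seminorm bound, I would first use Proposition \ref{prop:pri:basic properties of gen, sp. funct.}(iv) together with the formal self-adjointness of $\mathbf L$ on $X$ (applied entrywise, since $\mathbf L$ comes from the Casimir $\Omega$) to transfer powers of the Casimir eigenvalue onto $f$:
\[
(\|\rho\|^2+\langle\lambda,\lambda\rangle_1)^n\,\widetilde f(\lambda)=\widetilde{(-\mathbf L)^n f}(\lambda).
\]
Since $\mathbf L \in \mc U(\mf g_\C)$, the map $f\mapsto (-\mathbf L)^n f$ is a continuous endomorphism of $\mc S^p_\delta(X)$, so the problem is reduced to bounding $\|P(\partial/\partial\lambda)\widetilde g(\lambda)\|_{\mathbf 2}$ for $g=(-\mathbf L)^n f$. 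For the derivatives in $\lambda$, I would differentiate under the integral in (\ref{eq:pri:adjoint of gen. sp. funct.}); each $\partial/\partial\lambda_j$ extracts a factor $i\,\mc H(x^{-1}k)_j$, and by (\ref{eq:Iwasawa < Cartan}) combined with $|x^{-1}k|=|x|$, these are bounded by $c|x|$. Hence $\|P(\partial/\partial\lambda)\Phi^*_{\overline\lambda,\delta}(x)\|_{\mathbf 2}$ inherits the Arthur-type bound of (\ref{eq:pri:estimate of gen sp funct}) with an extra factor $(1+|x|)^{\deg P}$.

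It remains to bound $\int_G |\mathrm{tr}(g(x))|\,(1+|x|)^{\deg P}\,\|\Phi^*_{\overline\lambda,\delta}(x)\|_{\mathbf 2}\,dx$ uniformly for $\lambda\in\mf a^*_\e$. Passing to Cartan coordinates and combining $\Delta(H)\le c\,e^{2\rho(H)}$, the correct form of (\ref{eq:estimate of phi_0}) giving $\varphi_0(\exp H)\le c(1+\|H\|)^{c_{\mf a}}e^{-\rho(H)}$, and the Schwartz decay $\|g(x)\|_{\mathbf 2}\le \mu_{1,1,m}(g)(1+|x|)^{-m}\varphi_0^{2/p}(x)$, the integrand is bounded by a constant times
\[
(1+\|H\|)^{(2/p+1)c_{\mf a}+\deg P-m}\,e^{(1-2/p)\rho(H)}\,e^{\sigma_\lambda(H)},
\]
where $e^{\sigma_\lambda(H)}$ accounts for the exponential growth of $\Phi^*_{\overline\lambda,\delta}$ on the tube. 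The main obstacle is the sharp matching $\sigma_\lambda(H)\le \e\rho(H)$ for $\Im\lambda\in C^{\e\rho}$ and $H\in\overline{\mf a^+}$: this requires the refined form of Arthur's estimate which replaces the crude exponential $e^{c_0\|\Im\lambda\|(1+|x|)}$ in (\ref{eq:pri:estimate of gen sp funct}) by an exponential that respects the chamber structure and the convex hull defining $C^{\e\rho}$. Once this sharp bound is available, the choice $\e=2/p-1$ produces an exact cancellation $e^{(1-2/p)\rho(H)}e^{\sigma_\lambda(H)}\le e^{-\e\rho(H)}e^{\e\rho(H)}=1$, leaving a polynomial in $\|H\|$ that is integrable over $\mf a^+$ for $m$ sufficiently large. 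The resulting chain of inequalities yields $\nu^{\#}_{P,n}(\widetilde f)\le C_{P,n,\delta}\,\mu_{D,E,m}(f)$ for appropriate $D,E\in\mc U(\mf g_\C)$ and integer $m$, proving continuity.
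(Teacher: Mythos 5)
Your overall architecture matches the paper's: transfer the factor $(\|\rho\|^2+\langle\lambda,\lambda\rangle_1)^n$ onto $f$ via the eigenequation (\ref{eq:pri:gen.sp. funct. as eigen funct. of L}) and formal self-adjointness of $\mathbf{L}$, pull the $\lambda$-derivatives inside the integral to produce polynomial factors controlled by (\ref{eq:Iwasawa < Cartan}), and then beat the exponential growth on the tube against the Schwartz decay of $f$. Your preliminary verification of conditions (i) and (ii) of Definition \ref{def: delta SW space in the image side} by density through $\mc P^\delta(\mf a^*_\C)$ is a sensible addition (the paper's proof only estimates the seminorms). However, there is a genuine gap at the decisive step. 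You correctly identify that the crude factor $e^{c_0\|\Im\lambda\|(1+|x|)}$ in Arthur's estimate (\ref{eq:pri:estimate of gen sp funct}) is useless here, but you then simply posit ``the refined form of Arthur's estimate which respects the chamber structure'' and proceed conditionally (``Once this sharp bound is available\ldots''). That sharp bound $\sigma_\lambda(H)\le\e\rho(H)$ is precisely the content of the lemma's difficulty; asserting it without proof leaves the argument incomplete. Moreover, one does not need any refinement of Arthur's theorem: since $\|\delta(k^{-1})\|$ is bounded, the definition (\ref{eq:pri:adjoint of gen. sp. funct.}) gives directly $\|\Phi^*_{\overline\lambda,\delta}(x)\|_{\mathbf 2}\le \sqrt{d_\delta}\int_K e^{(-\Im\lambda-\rho)(\mc H(x^{-1}k))}\,dk=\sqrt{d_\delta}\,\varphi_{-i(-\Im\lambda)}(x)$, and the sharp control then comes from the elementary spherical function estimate (\ref{eq:estimate of phi lambda}) together with the definition of $C^{\e\rho}$ (noting that $C^{\e\rho}$ is stable under $\lambda\mapsto-\lambda$ because $w_0\rho=-\rho$). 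The $\lambda$-derivatives only contribute the harmless extra factor $(1+|x|)^{\deg P}$, as you observe.

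The paper reaches the same endpoint by a different route worth noting: using the left-$\delta$-type property of $f$, it unwinds the $K$-integral defining $\Phi^*_{\overline\lambda,\delta}$ back into $f$ itself (the computation (\ref{ali:101})--(\ref{ali:103})), so that in Iwasawa coordinates only the plain exponential $e^{(i\lambda+\rho)(H)}$ remains; after returning to Cartan coordinates the inner $K$-integral reassembles into $\varphi_{-i|\Im\lambda|}$, and (\ref{eq:estimate of phi lambda}) plus (\ref{eq:estimate of phi_0}) finish the job. Either mechanism works, but your write-up must actually supply one of them rather than defer to an unproved refinement of (\ref{eq:pri:estimate of gen sp funct}).
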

\begin{proof}
 It is enough to show that given any seminorm $\nu$ (or equivalently $\nu^{\#}$) on $\mc S_\delta(\mathfrak a^*_\e)$ one can find a seminorm $\mu$ on $\mc S^p_\delta(X)$ such that  $$\nu(\widetilde{f}) \leq c \mu(f) \mbox{~for all~} f \in \mc S^p_\delta(X).$$
With $\texttt{P} \in S(\mf a) $ and $t \in \mathbb{N} \cup \{ 0\}$ we get the following by using the integral expression (\ref{eq:def. delta sp transform}) of the $\delta$-spherical transform.

%Let $P$ be any polynomial in $S(\mf a)$ and $n$ be any nonnegative integer. From the definition of the $\delta$-spherical transform (Definition ) we get
\begin{align}
\label{ali:10}
\texttt{P}\left( \frac{\partial}{\partial \lambda}\right) \left\{ \left(\langle \lambda, \lambda \rangle_1 + \|\rho\|^2\right)^t  \widetilde{f}(\lambda)\right\}&= \texttt{P}\left( \frac{\partial}{\partial \lambda}\right) \int_G trf(x) ~\left(\langle \lambda, \lambda \rangle_1 + \|\rho\|^2\right)^t \Phi_{\overline{\lambda}, \delta}^*(x) dx \nonumber\\
& = (-1)^n \texttt{P}\left( \frac{\partial}{\partial \lambda}\right) \int_G trf(x)~~ \mathbf{L}^t \Phi_{\overline{\lambda}, \delta}^*(x) dx.
\end{align}
The last equality is a consequence of the property (\ref{eq:pri:gen.sp. funct. as eigen funct. of L}) of the generalized spherical function. Now a simple application of integration by parts gives:
\begin{align}
\label{ali:101}
(\ref{ali:10}) & = (-1)^n \texttt{P}\left( \frac{\partial}{\partial \lambda}\right) \int_G \mathbf{L}^t trf(x)~~ \Phi_{\overline{\lambda}, \delta}^*(x) dx \nonumber\\
&=(-1)^n \texttt{P}\left( \frac{\partial}{\partial \lambda}\right) \int_G  tr ~\mathbf{L}^tf(x) ~~  \Phi_{\overline{\lambda}, \delta}^*(x) dx \nonumber\\
&= (-1)^n \texttt{P}\left( \frac{\partial}{\partial \lambda}\right) \int_G  tr ~\mathbf{L}^tf(x) \left\{ \int_K e^{(i \lambda - \rho)(\mc H(x^{-1}k))} \delta(k^{-1}) dk \right\} dx.
\end{align}
The second line in the above chain of equalities uses the fact $\mathbf{L}tr f(\cdot)= tr \mathbf{L}f(\cdot)$ which is clear as the differential operator $\mathbf{L}$ acts entry-wise to the operator valued function $f $.\\
As $f \in \mc S^p_\delta(X)$ so it  can also be considered as a right-$K$-invariant function on the group $G$. The action of the Laplace Beltrami operator $\mathbf{L}$ on $f$ is the same as the action of the Casimir operator on $f$ considering as a function on $G$. Therefore, by the property of the Casimir operator,  the action of $\mathbf{L}$ does not change the  left-$K$-type of the function $f$, i. e the function $\mathbf{L}^t f(\cdot)$ is again of left-$\delta$-type. Moreover, for each nonnegative integer $n$ the function $\mathbf{L}^t f(\cdot) \in \mc S^p_\delta(X)$. Hence by Lemma \ref{Lem:analytic in the interior} the integral
on the right hand side of (\ref{ali:101}) exists absolutely. We apply Fubini's theorem to interchange the integrals and then we put $x^{-1}k=y^{-1}$ to get:
\begin{align}
\label{ali:102}
(\ref{ali:101})&= (-1)^n \texttt{P} \left( \frac{\partial}{\partial \lambda}\right) \int_K \int_G tr \mathbf{L}^t f(ky)~~ e^{(i \lambda - \rho)\mc H(y^{-1})} \delta(k^{-1}) dy dk \nonumber\\
&= (-1)^n \texttt{P}\left( \frac{\partial}{\partial \lambda}\right) \int_G \left\{ \int_K tr \mathbf{L}^t f(ky) ~~ \delta(k^{-1})dk  \right\} ~~e^{(i \lambda - \rho)\mc H(y^{-1})} dy \nonumber\\
&= \frac{(-1)^t}{d_\delta}  \texttt{P} \left( \frac{\partial}{\partial \lambda}\right) \int_G \mathbf{L}^t f(y)~~ e^{(i \lambda - \rho)\mc H(y^{-1})} dy\nonumber\\
&= \frac{(-1)^t}{d_\delta}  \texttt{P} \left( \frac{\partial}{\partial \lambda}\right) \int_G \mathbf{L}^t f(y^{-1})~~ e^{(i \lambda - \rho)\mc H(y)} dy
\end{align}
The third lie follows by using the Schwartz space extension of the isomorphism $\mc Q$ of Proposition \ref{left delta left delta check identification} and the last line uses the invariance of the Haar measure under the transformation $g \mapsto g^{-1}$.
Let us now break up the group $G$ as well as the Haar measure using the Iwasawa decomposition $KAN$ decomposition to get.
\begin{align}
\label{ali:103}
(\ref{ali:102})=&\frac{(-1)^t}{d_\delta}  \texttt{P} \left( \frac{\partial}{\partial \lambda}\right)  \int_K \int_{\mf a^+} \int_N \mathbf{L}^t f(n^{-1} (\exp H)^{-1} k^{-1}) e^{(i \lambda - \rho)(\mc H(k(\exp H)n)}   dk e^{2\rho(H)} dH dn \nonumber \\
&= \frac{(-1)^t}{d_\delta}  \texttt{P} \left( \frac{\partial}{\partial \lambda}\right)  \int_{\mf a^+} \int_N  \mathbf{L}^t f(n^{-1}(\exp H)^{-1}) e^{(i \lambda + \rho)(H)}    dH dn.
\end{align}
Let $ \alpha_1, \alpha_2, \cdots, \alpha_r$ be the set of all positive restricted roots. Let $\varepsilon_i \in \mathfrak a^*$ ($1 \leq i \leq r$) be such that $\langle\alpha_i, \varepsilon_j\rangle = \delta_{ij}$. Then clearly $\{\varepsilon_i \}_{1 \leq i \leq r}$ forms a basis of $\mathfrak a^*$ and thus we introduce a global coordinate on $\mathfrak a^*$ by $\lambda = \sum_{i=1}^r \lambda_i \varepsilon_i$ ($\forall \lambda \in \mathfrak a^*$).\\
Let $\texttt{P}(\lambda)= \sum_{\theta=0}^\beta \sum_{\beta_1+\cdots+ \beta_r =\theta} \alpha_{\beta_1+\cdots+ \beta_r} \lambda_1^{\beta_1} \lambda_2^{\beta_2} \cdots \lambda_r^{\beta_r}$. Thus,
$$\texttt{P}\left(\frac{\partial}{\partial \lambda}\right) = \sum_{\theta=0}^\beta \sum_{\beta_1+\cdots+ \beta_r =\theta} \alpha_{\beta_1+\cdots+ \beta_r} \left(\frac{\partial}{\partial \lambda_1}\right)^{\beta_1} \left(\frac{\partial}{\partial \lambda_2}\right)^{\beta_2} \cdots \left(\frac{\partial}{\partial \lambda_r}\right)^{\beta_r}.$$
There fore it is easy to check that:
\begin{equation}
\label{eq:3}
\texttt{P}\left(\frac{\partial}{\partial \lambda} \right) e^{(i \lambda + \rho)(H)}= \texttt{P}(H)~ e^{(i \lambda + \rho)(H)},
\end{equation}
where $\texttt{P}(H) =  \sum_{\theta=0}^\beta \sum_{\beta_1+\cdots+ \beta_r =\theta} \alpha_{\beta_1+\cdots+ \beta_r} \varepsilon_1^{\beta_1}(i H)) \cdots \varepsilon_r^{\beta_r}(i H)$.
From (\ref{ali:10}) it follows that:
\begin{align}
\label{ali:11}
&\left\| \texttt{P} \left( \frac{\partial}{\partial \lambda}\right)
\left\{ \left( \langle \lambda, \lambda \rangle_1  + \|\rho\|^2 \right)^n \widetilde{f}(\lambda)\right\} \right\|_{\mathbf{2}} \nonumber\\
&\hspace{1in}\leq \frac{1}{d_\delta} \int_{\mf a^+} \int_N  \|\mathbf{L}^t f(n^{-1}(\exp H)^{-1} )\|_{\mathbf{2}}~ \|\texttt{P}(H)\| \left| e^{(i \lambda + \rho)(H)}\right| dH dn \nonumber\\
&\hspace{1in}\leq \frac{1}{d_\delta} \int_{\mf a^+} \int_A  \|\mathbf{L}^t f(n^{-1}(\exp H)^{-1} )\|_{\mathbf{2}}~ \|\texttt{P}(H)\| ~e^{(|\Im \lambda| +\rho)(H)} dH dn.
\end{align}
 Some basic estimates gives the following:
 \begin{align}
 \label{ali:12}
 \|\varepsilon_j^{\beta_j}(i H)\| &\leq \|\varepsilon_j\|^{\beta_j} \|H\| \nonumber\\
 & \leq c \|\varepsilon_j\|^{\beta_j} |(\exp H) n| \nonumber\\
 & \leq c \|\varepsilon_i\|^{\beta_i} (1+ |(\exp H) n|).
 \end{align}
The above estimate is a consequence of (\ref{eq:Iwasawa < Cartan}) and it is true for all $n \in N$.
 Using (\ref{ali:12}) one can find $d_{\texttt{P}} \in \Z^+$ such that
\begin{equation}
\label{eq:estimate for the polynomial}
\|\texttt{P}(H)\| \leq c_1 (1+ |(\exp H) n|)^{d_{\texttt{P}}}.
\end{equation}
As $f \in \mc S^p_\delta(X)$ so for each $m \in \Z^+$ we have:
\begin{equation}
\label{eq:imp}
\|\mathbf{L}^t f(n^{-1} (\exp H)^{-1})\|_{\mathbf{2}} \leq \mu_{\mathbf{L}^t, m} (f)~(1+ |(\exp H) n|)^{-m}~ \varphi_0^{\frac{2}{p}}(n^{-1} (\exp H)^{-1}).
\end{equation}
The above inequality also uses the fact that $|g|= |g^{-1}|$ for all $g \in G$.
The estimates (\ref{ali:12}) and (\ref{eq:imp}) reduce the inequation (\ref{ali:11}) to the following:
\begin{align}
\label{ali:14}
&\left\| \texttt{P} \left( \frac{\partial}{\partial \lambda}\right)
\left\{ \left( \langle \lambda, \lambda \rangle_1  + \|\rho\|^2 \right)^t \widetilde{f}(\lambda)\right\} \right\|_{\mathbf{2}} \nonumber\\
 &\leq c_1 \frac{\mu_{\mathbf{L}^t, m}(f)}{d_\delta} \int_{\mf a^+} \int_N  \varphi_0^{\frac{2}{p}}(n^{-1}(\exp H)^{-1})~(1+ |(\exp H) n|)^{-m+d_{\texttt{P}}} ~e^{(|\Im \lambda| +\rho)(H)} dH dn \nonumber\\
&= c_1 \frac{\mu_{\mathbf{L}^t, m}(f)}{d_\delta} \int_K \int_{\mf a^+} \int_N  \varphi_0^{\frac{2}{p}}(n^{-1}(\exp H)^{-1} k^{-1})(1+ |k(\exp H)n|)^{-m+d_{\texttt{P}}} \nonumber\\
&\hspace{3.1in} e^{(|\Im \lambda| -\rho)(\mc H(k (\exp H)n))}dk e^{2 \rho(H)}dH dn  \nonumber\\
&= c_\delta \mu_{\mathbf{L}^t, m}(f)\int_G \varphi_0^{\frac{2}{p}}(g^{-1}) (1 + |g|)^{-m + d_{\texttt{P}}} e^{(|\Im \lambda| - \rho)(\mc H(g))} dg
\end{align}
where  $c_\delta= c_1 \frac{1}{d_\delta}$.
Now we use the Cartan decomposition i.e $g = k_1 \exp{|g|} k_2$  and appropriate form of the Haar measure (\ref{eq:Haar measure KAK}) to get:
\begin{align}
\label{ali:15}
(\ref{ali:14})& = c_\delta \mu_{\mathbf{L}^t, m}(f)\int_{\mathfrak a^+} \int_K  \varphi_0^{\frac{2}{p}} (\exp{|g^{-1}|}) (1 + |g|)^{-m+d_{\texttt{P}}} e^{(|\Im \lambda| - \rho)(\mc H( \exp{|g|} k_2))}\Delta(|g|) d |g|~ d k_2, \nonumber\\
& =c_\delta \mu_{\mathbf{L}^t, m}(f) \int_{\mathfrak a^+}\hspace{-.1in}\varphi_0^{\frac{2}{p}} (\exp{|g|}) (1 + |g|)^{-m+d_{\texttt{P}}}\left\{ \int_K e^{(i(-i|\Im \lambda|) - \rho)(\mc H( \exp{|g|} k_2))} d k_2\right\} \nonumber\\
&\hspace{4.5in} \Delta(|g|) d|g|\nonumber\\
&=c_\delta \mu_{\mathbf{L}^t, m}(f) \int_{\mathfrak a^+ } \varphi_0^{\frac{2}{p}} (\exp{|g|}) (1 + |g|)^{-m+d_{\texttt{P}}} ~ \varphi_{-i |\Im \lambda |} (\exp|g^{-1}|) \Delta(|g|) d|g| \nonumber\\
& \leq c_\delta \mu_{\mathbf{L}^t, m}(f) \int_{\mathfrak a^+ } \varphi_0^{\frac{2}{p}+1} (\exp{|g|}) (1 + |g|)^{-m+d_{\texttt{P}}} ~e^{|\Im \lambda(|g|)|}  \Delta(|g|) d|g|
\end{align}
where the last inequality in this chain follows by using the estimate (\ref{eq:estimate of phi lambda}) of the elementary spherical function. We take $\lambda \in \mf a^*_\e$, therefore $|\Im \lambda(|g|)| \leq \e \rho(|g|) $ where $\e = \left(\f{2}{p}-1 \right)$. Now by using the another fundamental estimate
(\ref{eq:estimate of phi_0}) we further  reduce the inequality (\ref{ali:15})  to the following
\begin{align}
\label{ali:16}
&\left\| \texttt{P} \left( \frac{\partial}{\partial \lambda}\right)
\left\{ \left( \langle \lambda, \lambda \rangle_1  + \|\rho\|^2 \right)^t \widetilde{f}(\lambda)\right\} \right\|_{\mathbf{2}} \nonumber\\
 &\hspace{1.5in}\leq c_\delta \mu_{\mathbf{L}^t, m}(f) \int_{\mathfrak a^+} \varphi_0^2(\exp{|g|}) (1 + |g|)^{-m +d_{\texttt{P}} + \e \theta} \Delta(|g|) ~d|g|, \nonumber\\
&\hspace{1.5in} = c_\delta \mu_{\mathbf{L}^t, m}(f) \int_{G} \varphi_0^2(g) ~(1 + |g|)^{-m +d_{\texttt{P}} + \e \theta} ~dg.
\end{align}
We choose a suitably large  $m \in \Z^+$  so the integral (\ref{ali:16}) converges (\cite[Lemma 7]{Harish66}). This completes the proof of the Lemma.
\end{proof}
We now take up the extension of the inversion formula (\ref{eq:inversion formuta delta sp. transform}) of the $\delta$-spherical transform.
\begin{Lemma}
\label{Lem:inversion exists in SW space lavel}
  For each $h \in \mc S_\delta(\mathfrak a^*_\e)$ the following integral
\begin{equation}
\label{eq:inv formula sw}
\frac{1}{\omega} \int_{\mathfrak a^*} \Phi_{\lambda, \delta}(x) ~h(\lambda)~|\mathbf{c}(\lambda)|^{-2} d \lambda, ~~\mbox{~~~~for } x\in X,
\end{equation}
gives a $Hom(V_\delta, V_\delta)$ valued, left-$\delta$-type $\mc C^\infty$ function on $X$. (From now on we shall denote this function by $\mathcal I h(\cdot)$.)
\end{Lemma}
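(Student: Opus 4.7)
The plan is to show the three required properties separately: absolute convergence of the integral, smoothness of the resulting function in $x$, and the left-$\delta$-type transformation law under $K$. The ingredients are already available in the preliminary section, so the proof amounts to assembling them carefully.

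First I would verify absolute convergence. Restricting the estimate (\ref{eq:pri:estimate of gen sp funct}) to real $\lambda \in \mf a^*$ kills the exponential factor $e^{c_0 \|\Im \lambda\|(1+|x|)}$, and combined with the $\mathbf{c}$-function bound (\ref{eq:c-function estimate}) one obtains
\begin{equation*}
\| \Phi_{\lambda,\delta}(x) \|_{\mathbf{2}} \, |\mathbf{c}(\lambda)|^{-2} \leq c(\delta) \, \varphi_0(x) \, (1+\|\lambda\|)^{b'},
\end{equation*}
for some $b' > 0$. Since $h \in \mc S_\delta(\mf a^*_\e)$, the decay seminorm $\nu_{1, b'+ r}$ controls $\|h(\lambda)\|_{\mathbf 2}$ by $C\,(1+\|\lambda\|)^{-b'-r}$ for any $r \in \Z^+$; choosing $r$ larger than $\dim \mf a^*$ makes the integrand on $\mf a^*$ integrable, and the integral defines a function $\mc I h(x) \in \mathrm{Hom}(V_\delta, V_\delta)$ for every $x \in X$.

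Next I would handle smoothness. Given $D, E \in \mc U(\mf g_\C)$, the estimate (\ref{eq:pri:estimate of gen sp funct}) applied to $\Phi_{\lambda,\delta}(D, x, E)$ yields the same type of polynomial bound in $\|\lambda\|$ (with a possibly larger exponent $b = b(D,E)$) uniformly for $x$ in compact subsets of $X$. This uniform bound, together with the rapid decay of $h$, allows differentiation under the integral via the dominated convergence theorem. Iterating gives $\mc I h \in \mc C^\infty(X, \mathrm{Hom}(V_\delta, V_\delta))$, and in fact produces the pointwise identity
\begin{equation*}
(\mc I h)(D, x, E) = \frac{1}{|W|} \int_{\mf a^*} \Phi_{\lambda,\delta}(D, x, E) \, h(\lambda) \, |\mathbf{c}(\lambda)|^{-2} \, d\lambda.
\end{equation*}

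Finally, the left-$\delta$-type property is immediate from (\ref{eq:pri:behaviour of gen. sp. funct. under K action}): for every $k \in K$,
\begin{equation*}
\mc I h(kx) = \frac{1}{|W|} \int_{\mf a^*} \Phi_{\lambda,\delta}(kx) \, h(\lambda) \, |\mathbf{c}(\lambda)|^{-2} \, d\lambda = \delta(k) \, \mc I h(x),
\end{equation*}
so $\mc I h$ is indeed a left-$\delta$-type smooth function on $X$. The only non-routine step is the absolute convergence, and once Arthur's pointwise estimate (\ref{eq:pri:estimate of gen sp funct}) is granted, the rapid decay built into the seminorm system of $\mc S_\delta(\mf a^*_\e)$ absorbs every polynomial factor that appears.
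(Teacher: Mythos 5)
Your proposal is correct and follows essentially the same route as the paper's own proof: both arguments combine Arthur's estimate (\ref{eq:pri:estimate of gen sp funct}) for the (derivatives of the) generalized spherical functions, the $\mathbf{c}$-function bound (\ref{eq:c-function estimate}), and the rapid decay of $h \in \mc S_\delta(\mf a^*_\e)$ to get absolute convergence of the integral and of all its formal derivatives, and then read off the left-$\delta$-type property from (\ref{eq:pri:behaviour of gen. sp. funct. under K action}). Your version is in fact a little more careful than the paper's, in that it explicitly justifies differentiation under the integral sign via dominated convergence rather than merely asserting that $\mathbf{D}\,\mathcal{I}h$ exists.
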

\begin{proof}
  Let us take any $D \in \mathcal U(\mathfrak g_\C)$. Then,
\begin{align}
\label{ali:7}
\frac{1}{\omega} \int_{\mathfrak a^*} \left\|\Phi_{\lambda, \delta}(D, x) \right\|_{\mathbf{2}}& \|h(\lambda)\|_{\mathbf{2}} |\mathbf{c}(\lambda)|^{-2} d \lambda \leq c_\delta ~\varphi_0(x) \int_{\mathfrak a^*} (1+ \|\lambda\|)^{b_{\textbf{D}}+b-n} d \lambda.
\end{align}
The above inequality follows from  the fact that $h \in \mc S_\delta(\mathfrak a^*_\e)$  and by using the decay (\ref{eq:seminorm on S delta imageside}), the estimate (\ref{eq:c-function estimate}) and the estimate (\ref{eq:pri:estimate of gen sp funct}) for the generalized spherical functions.
One can choose a suitably large $n$ so that the integral in the right hand side of (\ref{ali:7}) converges. This proves $\mathcal I h$ is a function on $X$ and $\textbf{D} \mathcal I h$ exists for all $\textbf{D} \in \mathcal U(\mathfrak g_\C)$. Hence $\mathcal I h \in \mc C^\infty(X, Hom(V_\delta, V_\delta))$. As, $\Phi_{\lambda, \delta}(\cdot)$ is of left-$\delta$-type ( (\ref{eq:pri:behaviour of gen. sp. funct. under K action}) of Proposition \ref{prop:pri:basic properties of gen, sp. funct.}), so is $\mathcal I h$.
\end{proof}
\begin{Lemma}
\label{Lem:cont. of the inverse of delta sp trans}
  If $h \in \mc S_\delta(\mathfrak a^*_\e)$ then the inverse $\mathcal I h \in \mc S^p_\delta(X)$.
\end{Lemma}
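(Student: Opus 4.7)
The plan is to reduce this claim to the $L^p$-Schwartz isomorphism for the bi-$K$-invariant spherical transform (Theorem \ref{Theo:SW. iso the for C valued spherical transform}, extended matrix-wise in Lemma \ref{Lem:SW. iso the for op. valued spherical transform}) by factoring each generalized spherical function through the elementary one. The bridge is the identity (\ref{relation of gen sp fn and ele sp fn}), $\Phi_{\lambda,\delta}|_{V_\delta^M} = (\mathbf{D}^\delta \varphi_\lambda)\, Q^\delta(\lambda)^{-1}$, coupled with the homeomorphism of Lemma \ref{Lem:S-delta and S-0 homeomorphic} on the image side.

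Concretely, starting with $h \in \mc S_\delta(\mf a^*_\e)$, I set $g(\lambda) := Q^\delta(\lambda)^{-1} h(\lambda)$. Lemma \ref{Lem:S-delta and S-0 homeomorphic} places $g$ in $\mc S_0(\mf a^*_\e)$, and Lemma \ref{Lem:SW. iso the for op. valued spherical transform} then produces a unique bi-$K$-invariant, matrix-valued Schwartz function $\phi \in \mc S^p(G//K, Hom(V_\delta, V_\delta^M))$ with $\mc S \phi = g$, realised entry-wise by the spherical inversion integral
\[
\phi(x) \;=\; \tfrac{1}{|W|}\int_{\mf a^*} \varphi_\lambda(x)\, g(\lambda)\,|\mathbf{c}(\lambda)|^{-2}\,d\lambda.
\]

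The key step is to identify $\mathcal I h$ with $\mathbf{D}^\delta \phi$. By Remark \ref{rem:pri:matrix of Phi lambda delta}, $\Phi_{\lambda,\delta}(x)$ vanishes on $(V_\delta^M)^\perp$, while the range of $h(\lambda)$ lies in $V_\delta^M$, so the integrand of (\ref{eq:inv formula sw}) factorises as $\Phi_{\lambda,\delta}(x)\, h(\lambda) = (\mathbf{D}^\delta \varphi_\lambda)(x)\, g(\lambda)$. Absolute convergence of the resulting $\lambda$-integral --- guaranteed by the seminorm control (\ref{eq:seminorm on S delta imageside}) on $h$, the $\mathbf{c}$-function estimate (\ref{eq:c-function estimate}), and the polynomial-times-$\varphi_0$ bound (\ref{eq:pri:estimate of gen sp funct}) applied to the scalar functions $(\mathbf{D}^\delta)_{ik}\varphi_\lambda$ --- permits pulling the matrix $\mathbf{D}^\delta$ of constant-coefficient differential operators outside the $\lambda$-integral, yielding $\mathcal I h(x) = \mathbf{D}^\delta \phi(x)$ on $X$.

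To conclude, I would invoke the fact that each entry $(\mathbf{D}^\delta)_{ik}$ belongs to $\mc U(\mf g_\C)$ and that the Schwartz seminorms $\mu_{D,E,n}$ are stable under composition with such operators: for any $D, E \in \mc U(\mf g_\C)$ and any integer $n\geq 0$,
\[
\mu_{D,E,n}(\mathbf{D}^\delta \phi) \;\leq\; \sum_{i,j,k} \mu_{D (\mathbf{D}^\delta)_{ik},\, E,\, n}(\phi_{kj}) \;<\; +\infty,
\]
which is exactly the Schwartz decay (\ref{eq:op-valued delta Sw-space decay}) required for $\mathcal I h\in \mc S^p_\delta(X)$; the left-$\delta$-type transformation $\mathcal I h(kx)=\delta(k)\,\mathcal I h(x)$ is inherited directly from (\ref{eq:pri:behaviour of gen. sp. funct. under K action}) under the integral sign. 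The principal technical obstacle is the bookkeeping in the key identification step, reconciling the $d_\delta\times d_\delta$ convention used for $\Phi_{\lambda,\delta}$ and $h$ with the block decomposition $V_\delta = V_\delta^M \oplus (V_\delta^M)^\perp$ into the $d_\delta\times\ell_\delta$, $\ell_\delta\times\ell_\delta$ and $\ell_\delta\times d_\delta$ factors $\mathbf{D}^\delta\varphi_\lambda$, $Q^\delta(\lambda)^{-1}$ and $h$, together with the justification of the differentiation-integration interchange; once this factorisation is handled cleanly, the remainder of the argument is routine.
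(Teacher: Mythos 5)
Your argument is correct, and it reaches the conclusion by a genuinely more direct route than the paper. The paper does not identify $\mathcal I h$ with $\mathbf{D}^\delta\phi$ at the Schwartz-space level at all: it first restricts to $h \in \mathcal P^\delta(\mathfrak a^*_\C)$, where Lemma \ref{Lem:5} (the compactly supported analogue of your factorisation) gives $f=\mathcal I h = \mathbf{D}^\delta\phi$ with $\phi \in \mathcal D(G//K, Hom(V_\delta,V_\delta^M))$; it then runs the seminorm chain $\mu_{\textbf{D},\textbf{E},n}(f)={\mu_0}_{\mathbf{D}^\delta\textbf{D},\textbf{E},n}(\phi)\leq c_\delta\,\nu_{P_1,m'_\delta}(h)$ using Anker's continuity estimate for the bi-$K$-invariant inverse transform together with Lemma \ref{Lem:S-delta and S-0 homeomorphic}, and finally passes to general $h\in\mathcal S_\delta(\mathfrak a^*_\e)$ by density of $\mathcal P^\delta(\mathfrak a^*_\C)$ (Lemma \ref{Lem:delta PW is dence in delta SW space}) and completeness of the Fr\'{e}chet space $\mathcal S^p_\delta(X)$. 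You instead apply the image-side homeomorphism of Lemma \ref{Lem:S-delta and S-0 homeomorphic} and the matrix-valued bi-$K$-invariant isomorphism (Lemma \ref{Lem:SW. iso the for op. valued spherical transform}) directly to $h$, and prove the identity $\mathcal I h=\mathbf{D}^\delta\phi$ on all of $\mathcal S_\delta(\mathfrak a^*_\e)$ by factoring $\Phi_{\lambda,\delta}(x)h(\lambda)=(\mathbf{D}^\delta\varphi_\lambda)(x)\,Q^\delta(\lambda)^{-1}h(\lambda)$ under the integral. What your route buys is the elimination of the density/Cauchy-sequence step (and of the implicit verification that the Schwartz-limit of the $f_n$ actually coincides with $\mathcal I h$); what it costs is that the interchange of $\mathbf{D}^\delta$ with the $\lambda$-integral must now be justified for Schwartz-class rather than Paley--Wiener data. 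Your appeal to absolute convergence should be sharpened slightly there: to differentiate under the integral sign one needs the locally uniform convergence of all the differentiated integrals, which does follow from (\ref{eq:pri:estimate of gen sp funct}) (valid for arbitrary $\mathcal U(\mathfrak g_\C)$-derivatives), the $\mathbf{c}$-function bound (\ref{eq:c-function estimate}), and the rapid decay (\ref{eq:seminorm on S delta imageside}) of $h$, but it is the derivative estimates, not mere absolute convergence of the undifferentiated integrand, that do the work. With that caveat, both proofs rest on the same three pillars --- Lemma \ref{Lem:S-delta and S-0 homeomorphic}, Anker's bi-$K$-invariant isomorphism, and the relation (\ref{relation of gen sp fn and ele sp fn}) --- and yours is arguably the cleaner organisation.
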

\begin{proof}
To prove this Lemma we shall first consider the spaces $ \mc P^\delta(\mf a^*_\C)$ and $\mc D_\delta(X)$ equipped with the topologies of the respective Schwartz spaces containing them. We have already noticed that $\mc P^\delta(\mf a^*_\C)$ and $\mc D_\delta(X)$ are dense subspaces of $\mc S_\delta(\mf a^*_\e)$ and $\mc S^p_\delta(X)$ respectively.\\
We shall show that  $\mathcal I $ is a continuous map from $\mathcal P^\delta(\mathfrak a^*_\C)$ onto (by Theorem \ref{The:Topological PW Theorem}) $\mc D_\delta(X)$ with respective to the Schwartz space topologies. That is for  $h \in \mathcal P^\delta(\mathfrak a^*_\C)$ and for each seminorm $\mu$ on $\mc D_\delta(X)$ , there exists a seminorm $\nu$ on $\mc P^\delta(\mf a^*_\C)$ such that $\mu(f) \leq c_\delta \nu(h)$, where $f = \mathcal I h \in \mc D_\delta(X)$ and $c_\delta$ is a positive constant depending on $\delta \in \widehat{K}_M$.\\
As, $f \in \mc D_\delta(X)$, by Lemma \ref{Lem:5}, we get a  function $\phi \in \mathcal D( G //K, Hom(V_\delta, V_\delta))$ such that $f \equiv \textbf{D}^\delta \phi$. If $\Phi$ be the image of $\phi$ under the spherical transform then it follows easily that $h= Q^\delta \Phi$. Let $\textbf{D}, \textbf{E} \in \mathcal U(\mathfrak g_\C)$ and $n$ be any nonnegative integer, then
\begin{align}
\label{ali:8}
\mu_{\textbf{D}, \textbf{E}, n}(f) &= \sup_{x \in G} \|f(\textbf{D}, x, \textbf{E})\|_{\mathbf{2}} (1 + |x|)^n \varphi^{- \frac{2}{p}}_0(x), \nonumber\\
&=  \sup_{x \in G} \|\textbf{D}^\delta \phi (\textbf{D}, x, \textbf{E})\|_{\mathbf{2}} (1 + |x|)^n \varphi^{- \frac{2}{p}}_0(x), \nonumber\\
&= {\mu_0}_{\textbf{D}^\delta \textbf{D}, \textbf{E}, n }(\phi).
\end{align}
(~~\emph{Here $\mu_0$ denote the seminorms on the Fr\'{e}chet space $\mc S^p(G // K, Hom(V_\delta, V_\delta))$}.~~) At this point we use Anker's \cite{Anker91} proof of the Schwartz space isomorphism theorem for bi-$K$-invariant functions. For each $\textbf{D}, \textbf{E} \in \mathcal U(\mathfrak g_\C)$ and $n \in \Z^+$ one can find a polynomial $P \in S(\mf a)$ and $m_\delta \in \Z^+$ (depending on $d_\delta$) such that,
\begin{align}
\label{ali:9}
{\mu_0}_{\textbf{D}^\delta \textbf{D}, \textbf{E}, n } (\phi) &\leq c_\delta \sup_{\lambda \in Int \mathfrak a^*_\e} \left\| P\left( \frac{\partial}{\partial \lambda}\right) \Phi(\lambda) \right\|_{\mathbf{2}} (1 + \|\lambda\|)^{m_\delta}, \nonumber\\
& \leq c_\delta \sup_{\lambda \in Int \mathfrak a^*_\e} \left\| P_1\left( \frac{\partial}{\partial \lambda}\right) h(\lambda) \right\|_{\mathbf{2}} (1 + \|\lambda\|)^{m^\prime_\delta}.
\end{align}
The last line in (\ref{ali:9}) follows by using the isomorphism, proved in Lemma \ref{Lem:S-delta and S-0 homeomorphic},  between the Schwartz spaces $S_0(\mathfrak a^*_\e)$ and $S_\delta(\mathfrak a^*_\e)$. Hence (\ref{ali:8}) and (\ref{ali:9}) togather gives $\mu_{\textbf{D}, \textbf{E}, n}(f) \leq c_\delta \nu_{P_1, m^\prime_\delta}(h)$.
As we have started with an $h \in \mathcal P^\delta(\mathfrak a^*_\C) \subset \mc S_\delta(\mathfrak a^*_\e)$, the right hand side of (\ref{ali:9}) is clearly finite. Hence $\mathcal I h= f \in \mc S^p_\delta(X)$.\\
Now we apply the density argument to conclude the Lemma. Let us now take $h \in \mc S_\delta(\mathfrak a^*_\e) $. As, $\mathcal P_\delta(\mathfrak a^*_\C)$ is dense in $\mc S_\delta(\mathfrak a^*_\e)$, there exists a Cauchy sequence $\left\{h_n \right\} \subset \mathcal P_\delta(\mathfrak a^*_\C)$ converging to $h$. Then, by what we have proved above, we can get a Cauchy sequence $\left\{ f_n\right\} \subset \mc D_\delta(X)$ such that $\widetilde{f_n} = h_n$. As $\mc S^p_\delta(X)$ is a Fr\'{e}chet space the sequence must converge to some $f \in \mc S^p_\delta(X)$. Clearly, $f = \mathcal I h $. This completes the proof of the Lemma.
\end{proof}
We note that, the Lemma \ref{Lem:cont. of the inverse of delta sp trans} also implies  the fact that the $\delta$-spherical transform  is an injection in the corresponding Schwartz space level. \\
Finally, Lemma \ref{Lem:continuity of the delta spherical transform} and Lemma \ref{Lem:cont. of the inverse of delta sp trans} together shows that the $\delta$-spherical transform is a continuous surjection of $\mc S_\delta^p(X)$ onto $\mc S_\delta(\mf a^*_\e)$ for $(0 < p \leq 2)$. A simple application of the open mapping theorem concludes that the $\delta$-spherical transform is a topological isomorphism between the corresponding Schwartz spaces. This proves the Theorem \ref{Theorem:main-1}.\\  In the next section we shall extend this result to a slightly larger class of functions.

%------------------------------------------------------------------------------------------------------------------
\section{\textbf{ Finite $K$-type functions}}\label{sec:Finite-type-functio}
\setcounter{equation}{0}
Let choose and fix a finite subset $\Gamma \subset \what{K}_M$. We denote $\mc D_\Gamma(X)$ for the space of all compactly supported $\mc C^\infty$ functions on $X$ with the property that: for $f \in\mc D_\Gamma(X)$~$f^\delta \equiv 0$ for $\delta \notin \Gamma $.
We take the subclass $\mc S_\Gamma^p(X)$ of the Schwartz class $\mc S^p(X)$ (for $0< p \leq 2$) defined by
\begin{equation}
 \label{eq: def left finite schwartz space}
 \mc S^p_\Gamma(X) = \{ f \in \mc S^p(X ) ~| ~ f(X) = \sum_{\delta \in \Gamma} tr f^\delta (x)~ \mbox{~for  all~} x \in X \}.
 \end{equation}
 The seminorms on $\mc S^p_\Gamma(X)$ are  as follows: for each $\textbf{D}, \textbf{E} \in \mc U(\mf g_\C)$ and $n \in \Z^+$,
 \begin{equation}
 \label{eq:seminorm-K-finite}
 {\mu_\Gamma}_{\textbf{D}, \textbf{E}, n} (f) = \sup_{\delta \in \Gamma, x \in X } \left\| f^\delta(\textbf{D}, x, \textbf{E}) \right\|_{\mathbf{2}}~(1 + |x|)^n~ \varphi_0^{- \frac{2}{p}}(x)~< +\infty.
\end{equation}
Clearly $\mc D_\Gamma(X)$ is a dense subset of the Schwartz space $\mc S^p_\Gamma(X)$ with respect to the Fr\'{e}chet topology induced by the countable family of seminorms $\{{\mu_\Gamma}_{\textbf{D}, \textbf{E}, n}\}$.
  It also follows easily from the definition \ref{def:Classical L-p Schwartz space} and (\ref{eq:op-valued delta Sw-space decay} ) that, if    $f \in  \mc S^p_\Gamma(X)$ then for each  $\delta \in \Gamma $ the projection $f^\delta \in \mc S^p_\delta(X)$.  For these classes of functions  the transform we shall mainly consider is the Helgason Fourier transform.\\
Let us now define the Schwartz class functions on the domain $\mf a^*_\e \times K/M$.
%------------------------------------------------------------------
\begin{Definition}
 \label{def: HFT image Schwartz space}
   Let $\mc S_\Gamma(\mf a^*_\e \times K/M)$ denotes the class of functions $h$ on $\mf a^*_\e \times K/M$ satisfying the following properties:
   \begin{enumerate}
     \item For each $kM \in K/M$, the function $\lambda \mapsto h(\lambda, kM )$ is holomorphic on $Int \mf a^*_\e$, and it extends as a continuous function on the closed complex tube $\mf a^*_\e$. The function $h$ is a smooth function in the $k \in K/M$ variable.
      \item For all $\lambda\in \mathfrak a^*_\e$, $\omega \in W$ and $x \in G$
   \begin{equation}
   \check{h}(\lambda, x) = \check{h} (\omega \lambda, x),
   \end{equation}
   where $\check{h}(\lambda, x) = \int_K h(\lambda, k) e^{-(i \lambda+ \rho)H(x^{-1}k)} dk $.
     \item For each $P \in S (\mf a)$  and for  integers $n, m>0$ the function $h$ satisfies the following decay condition
  \begin{equation}
  \label{eq: SW decay for image HFT}
  \sup_{(\lambda, k)\in Int \mf a^*_\e \times K/M} \left| P\lt(\f{d}{d \lambda}\rt) h(\lambda, k, \omega^m_{\mf k}) \right| ~(1 + |\lambda|)^n < ~+ \infty.
  \end{equation}
  \item For each $\delta \in \what{K}_M \setminus \Gamma$ the left-$\delta$-projection  $h^\delta$ defined by
        \begin{equation}
        \label{eq:ch2:delta projection of h}
        h^\delta(\lambda, k) = d_\delta\int_K h(\lambda, k_1 k ) \delta(k_1^{-1}) dk_1,
        \end{equation}
        is identically a zero function on $\mf a^*_\e \times K/M$.
   \end{enumerate}
  \end{Definition}
  The space $\mc S_\Gamma(\mf a^*_\e \times K/M)$ is a Fr\'{e}chet space with the topology induced by the seminorms (\ref{eq: SW decay for image HFT}). By the theory of smooth functions on compact groups \cite{Sugiura71}, the topology of the space $\mc S_\Gamma(\mf a^*_\e \times K/M)$ can be given by the following equivalent family of seminorms, for each $P \in S(\mf a)$ and $m \in \Z^+$ we have
     \begin{equation}
     \sup_{\lambda \in Int \mf a^*_\e, \delta \in F} \lt\|P\lt(\hspace{-.03in}\f{d}{d \lambda} \hspace{-.03in}\rt) h^\delta(\lambda, eM) \rt\|_{\textbf{2}} (1+ |\lambda|)^m < + \infty, \mbox{~for~} h \in \mc S_\Gamma(\mf a^*_\e \times K/M).
     \end{equation}
     We denote by $\mc S(\mf a^*_\e \times K/M)$ the Fr\'{e}chet space satisfying all the conditions of the Definition \ref{def: HFT image Schwartz space} except condition (iv). The space $\mc S_\Gamma(\mf a^*_\e \times K/M)$  is a closed subspace of $\mc S(\mf a^*_\e \times K/M)$. We know that the HFT can extended to the  Schwartz class $\mc S^p(X)$  \cite{Eguchi76}, furthermore the HFT is a continuous map from $\mc S^p(X)$ into $\mc S(\mf a^*_\e \times K/M)$. Hence the HFT is a continuous map from $\mc S^p(F, X)$ into $\mc S_\Gamma(\mf a^*_\e \times K/M)$.
  \begin{Lemma}
  \label{lem:ch2: hdelta}
  Let $h \in \mc S_\Gamma(\mf a^*_\e \times K/M)$, then for each $\delta \in F$, the left-$\delta$-projection $h^\delta \in \mc S_\delta(\mf a^*_\e)$.
  \end{Lemma}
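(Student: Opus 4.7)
The plan is to verify the three conditions of Definition~\ref{def: delta SW space in the image side} for $h^\delta(\lambda, e)$ -- since by part~(i) of Lemma~\ref{Lem:2} one has $h^\delta(\lambda, k) = \delta(k) h^\delta(\lambda, e)$, it suffices to analyze the case $k = e$ -- together with the fact that $h^\delta(\lambda, e)$ maps $V_\delta$ into $V_\delta^M$. The $M$-fixedness is immediate from the right $M$-invariance $h(\lambda, km) = h(\lambda, k)$: a substitution $k_1 \mapsto k_1 m^{-1}$ in (\ref{eq:ch2:delta projection of h}) gives $\delta(m) h^\delta(\lambda, e) = h^\delta(\lambda, e)$, exactly as in the proof of (\ref{eq:pri:M fixedness of adj. of gen. sp. funct.}). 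Holomorphy of $h^\delta(\cdot, e)$ on $Int\,\mf a^*_\e$ and continuity on the closed tube follow by applying Fubini and Morera to the integral over the compact group $K$, in the manner of Lemma~\ref{Lem:analytic in the interior}. The seminorm bound (\ref{eq:seminorm on S delta imageside}) is routine: the unitarity identity $\|\delta(k_1^{-1})\|_{\mathbf 2} = \sqrt{d_\delta}$ together with (\ref{eq: SW decay for image HFT}) applied to $h$ (with $m = 0$) yields $\nu_{P,n}(h^\delta) \leq c_\delta \sup_{k\in K,\,\lambda \in Int\,\mf a^*_\e}(1+\|\lambda\|)^n \lt| P\lt(\frac{\partial}{\partial\lambda}\rt) h(\lambda, k) \rt| < \infty$.

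The main obstacle is condition~(ii), the Weyl invariance of $\lambda \mapsto Q^\delta(\lambda)^{-1} h^\delta(\lambda, e)$. I would exploit the $W$-invariance of $\check h$ imposed in condition~(ii) of Definition~\ref{def: HFT image Schwartz space}. The substitution $k' = k_1 k$ in the defining integral of $\check{h^\delta}$ shows that
\[
\check{h^\delta}(\lambda, x) = d_\delta \int_K \delta(k_1^{-1}) \check h(\lambda, k_1 x)\, dk_1,
\]
and combined with $\check h(\lambda, \cdot) = \check h(\omega\lambda, \cdot)$ this yields $\check{h^\delta}(\lambda, x) = \check{h^\delta}(\omega\lambda, x)$ for every $\omega \in W$. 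On the other hand, pulling $\delta(k) h^\delta(\lambda, e)$ out of the $k$-integral and comparing with (\ref{eq:pri:gen. sp. funct.}) gives $\check{h^\delta}(\lambda, x) = \Phi_{\lambda,\delta}(x)\, h^\delta(\lambda, e)$. Consequently, for every $\omega\in W$ and $x\in G$,
\[
\Phi_{\lambda,\delta}(x)\, h^\delta(\lambda, e) = \Phi_{\omega\lambda,\delta}(x)\, h^\delta(\omega\lambda, e).
\]
Restricting $x$ to $A$, applying (\ref{relation of gen sp fn and ele sp fn}) together with $\varphi_{\omega\lambda} = \varphi_\lambda$, this reduces to
\[
(\mathbf{D}^\delta \varphi_\lambda)(a) \bigl[\, Q^\delta(\lambda)^{-1} h^\delta(\lambda, e) - Q^\delta(\omega\lambda)^{-1} h^\delta(\omega\lambda, e) \,\bigr] = 0, \qquad a \in A.
\]
For generic $\lambda$ the $\ell_\delta$ columns of $a \mapsto (\mathbf{D}^\delta \varphi_\lambda)(a)$ are linearly independent as scalar functions on $A$ -- otherwise $\Phi_{\lambda,\delta}|_{V_\delta^M}$ would be singular, contradicting standard Helgason theory via (\ref{relation of gen sp fn and ele sp fn}). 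Hence the bracketed $\ell_\delta \times d_\delta$ matrix vanishes for generic $\lambda$, and by analytic continuation throughout a dense open subset of $\mf a^*_\C$.

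Finally, Proposition~\ref{Proposition:Lower bound for the Kostants polynomials} asserts that $\det Q^\delta$ is bounded away from zero on $\mf a^* + i\overline{\mf a^{*+}}$, so the quotient $Q^\delta(\lambda)^{-1} h^\delta(\lambda, e)$ is genuinely holomorphic on $Int\,\mf a^*_\e \cap (\mf a^* + i\overline{\mf a^{*+}})$. The Weyl invariance just established then transplants this holomorphic function across $W$-translates; because $C^{\e\rho}$ is $W$-invariant, those translates cover all of $Int\,\mf a^*_\e$, and compatibility on the overlapping walls is forced by continuous $W$-equivariance. This produces a single $W$-invariant holomorphic extension on $Int\,\mf a^*_\e$, verifying condition~(ii) and completing the proof of the Lemma. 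The central analytic input is the linear-independence step in the $W$-invariance argument; the other verifications are direct consequences of formulas already in place in Sections~\ref{sec:delta spherical transofrm} and~\ref{sec:Schwartz spaces}.
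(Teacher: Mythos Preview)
Your argument is correct and follows essentially the same route as the paper's proof. Both proofs establish that $\check{h^\delta}(\lambda,x)=\check{h^\delta}(\omega\lambda,x)$ and that $\check{h^\delta}(\lambda,a)=\Phi_{\lambda,\delta}(a)\,h^\delta(\lambda,e)$, then deduce the $W$-invariance of $Q^\delta(\lambda)^{-1}h^\delta(\lambda,e)$. The only cosmetic difference is that where you pass through (\ref{relation of gen sp fn and ele sp fn}) and the identity $\varphi_{\omega\lambda}=\varphi_\lambda$, the paper invokes the already-packaged relation (\ref{W action on Phi|A}) directly; these are equivalent, since (\ref{W action on Phi|A}) is precisely (\ref{relation of gen sp fn and ele sp fn}) combined with $W$-invariance of $\varphi_\lambda$. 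Your write-up is in fact more careful than the paper's: you make explicit the linear-independence step needed to cancel the $\Phi_{\lambda,\delta}$ factor (the paper leaves this implicit), and you address the holomorphy of $Q^\delta(\lambda)^{-1}h^\delta(\lambda,e)$ on all of $Int\,\mf a^*_\e$ via Proposition~\ref{Proposition:Lower bound for the Kostants polynomials} and $W$-transport, which condition~(ii) of Definition~\ref{def: delta SW space in the image side} formally requires but the paper does not discuss.
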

  \begin{proof}
  The function $\lambda \mapsto h^\delta(\lambda, eM)$ trivially satisfies condition (i) of Definition \ref{def: delta SW space in the image side}. The required decay (\ref{eq:Sw. space decay on lambda}) is also an easy consequence of (\ref{eq: SW decay for image HFT}). It can be shown that the $\delta$-projection $h^\delta$ also satisfies the condition $$\check{(h^\delta)}(\lambda, x) = \check{(h^\delta)}( \omega \lambda, x) \mbox{~for all~} \omega \in W.  $$
  It is easy to check that $h^\delta(\lambda, kM) = \delta(k) h^\delta(\lambda, eM)$, hence for each $(\lambda, a) \in \mf a^*_\e \times A$ we write $\check{(h^\delta)}(\lambda, a)$  as follows
  $$\check{(h^\delta)}(\lambda, a) = \Phi_{\lambda, \delta}(a) h^\delta(\lambda, eM).$$
  By the property (\ref{W action on Phi|A}) of the generalized spherical functions it follows that the function $\lambda \mapsto Q^\delta(\lambda)^{-1}h^\delta(\lambda, eM)$ is $W$-invariant. Hence we conclude that $h^\delta(\cdot, eM) \in \mc S_\delta(\mf a^*_\e)$.
  \end{proof}
By using Theorem \ref{Theorem:main-1}, for each $h \in \mc S_\Gamma(\mf a^*_\e \times K/M)$ we get an unique finite sequence $\{ f^\delta\}_{\delta \in \Gamma}$ of $\mc C^\infty $ functions on $X$ such that each member $f^\delta \in \mc S^p_\delta(X)$. We consider the following scalar valued function
\begin{equation}
\label{eq:ch2:function}
f(x) = \sum_{\delta \in F} trf^\delta(x), ~~~x \in X.
\end{equation}
 For each $\delta \in \Gamma$, $\lt(\mc F^{-1}h \rt)^\delta(x) = \mc I(h^\delta)(x) = f^\delta(x)$.  Hence, we get $\mc F^{-1} h(x) = f(x) $ for all $x \in X$. The function $f \in \mc S^p_\Gamma(X)$. Furthermore for each $D, E\in  \mc U(\mf g_\C)$ and $n \in \Z^+$ we have
\begin{align*}
\sup_{x \in G} |f(D,x,E)| &(1+|x|)^n \varphi_0^{-\f{2}{p}}(x) \\ &\leq c \sup_{x \in G, \delta \in \Gamma} \|f^\delta(D,x,E)\|_{\textbf{2}} (1+|x|)^n \varphi_0^{-\f{2}{p}}(x)\\
&\leq c_1  \sup_{\lambda \in Int \mf a^*_\e, \delta \in \Gamma} \lt\|P\lt(\f{d}{d \lambda}\rt) h^\delta(\lambda, eM) \rt\|_{\textbf{2}} (1 + |\lambda|)^m\\
&\leq c_2 \sup_{\lambda \in Int \mf a^*_\e, k \in K} \lt|P_1\lt(\f{d}{d \lambda}\rt) h(\lambda, k, \omega_{\mf k}^r)\rt| (1 + |\lambda|)^{m_1},
\end{align*}
for some $P_1 \in \mc S (\mf a^*)$ and $r, m_1 \in \Z^+$.
 Thus, the HFT is a bijective map from $\mc S^p_\Gamma(X)$  to $\mc S_\Gamma(\mf a^*_\e \times K/M)$. Once again, by the open mapping theorem we conclude the following.
\begin{Theorem}
\label{Theo:main theorem 2}
Let $\Gamma$ be a finite subset of $\what{K}_M$, then the HFT is a topological isomorphism of the space $\mc S^p_\Gamma(X)$ onto the Fr\'{e}chet space $\mc S_\Gamma(\mf a^*_\e \times K/M)$.
\end{Theorem}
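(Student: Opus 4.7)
The plan is to reduce Theorem \ref{Theo:main theorem 2} to finitely many instances of Theorem \ref{Theorem:main-1}, using the Peter--Weyl decomposition (\ref{eq:pri:Peter-Weyl decomposition}) and the bridge provided by Lemma \ref{Lem:Connection betwn HFT and delta sp transform}. Since $\Gamma$ is finite, every $f\in\mc S^p_\Gamma(X)$ splits as a finite sum
\begin{equation*}
f(x)=\sum_{\delta\in\Gamma} tr f^\delta(x),\qquad f^\delta\in\mc S^p_\delta(X),
\end{equation*}
and, by part (ii) of Lemma \ref{Lem:2} together with Lemma \ref{Lem:Connection betwn HFT and delta sp transform},
$(\mc Ff)^\delta(\lambda,k)=\delta(k)\widetilde{f^\delta}(\lambda)$ for each $\delta\in\Gamma$, while $(\mc Ff)^\delta\equiv 0$ for $\delta\notin\Gamma$. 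So on the image side, $\mc Ff$ is determined by the finite family $\{\widetilde{f^\delta}\}_{\delta\in\Gamma}$, and each of these lies in $\mc S_\delta(\mf a^*_\e)$ by Theorem \ref{Theorem:main-1}.

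For continuity of HFT from $\mc S^p_\Gamma(X)$ into $\mc S_\Gamma(\mf a^*_\e\times K/M)$, I would use the equivalent family of seminorms on the image side given just after Definition \ref{def: HFT image Schwartz space}, namely $\sup_{\lambda,\delta\in\Gamma}\|P(\partial/\partial\lambda)(\mc Ff)^\delta(\lambda,eM)\|_{\mathbf 2}(1+\|\lambda\|)^m$. For each single $\delta\in\Gamma$ this seminorm is a seminorm on $\mc S_\delta(\mf a^*_\e)$ applied to $\widetilde{f^\delta}$, and Lemma \ref{Lem:continuity of the delta spherical transform} bounds it by a Schwartz seminorm of $f^\delta$, which in turn is bounded by a seminorm of $f$ via (\ref{matrix valud delta projection}) and (\ref{eq:seminorm-K-finite}). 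Taking the (finite) supremum over $\delta\in\Gamma$ gives the required estimate.

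For surjectivity, given $h\in\mc S_\Gamma(\mf a^*_\e\times K/M)$, Lemma \ref{lem:ch2: hdelta} says that each $h^\delta(\,\cdot\,,eM)$ belongs to $\mc S_\delta(\mf a^*_\e)$. Theorem \ref{Theorem:main-1} produces a unique $f^\delta\in\mc S^p_\delta(X)$ with $\widetilde{f^\delta}=h^\delta(\,\cdot\,,eM)$, and I would set $f(x)=\sum_{\delta\in\Gamma} tr f^\delta(x)$. Because $\Gamma$ is finite, $f\in\mc S^p_\Gamma(X)$, and by construction $(\mc Ff)^\delta=h^\delta$ for every $\delta\in\what K_M$ (both sides vanishing for $\delta\notin\Gamma$ by condition (iv) of Definition \ref{def: HFT image Schwartz space}). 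To conclude $\mc Ff=h$ I would invoke the Peter--Weyl decomposition (\ref{eq:pri:Peter-Weyl decomposition}) applied in the $k$-variable, which reconstructs $h$ from the $h^\delta$; this is where condition (iv) is used essentially.

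Injectivity is immediate: $\mc Ff=0$ forces each $(\mc Ff)^\delta=0$, hence each $\widetilde{f^\delta}=0$, hence each $f^\delta=0$ by Theorem \ref{Theorem:main-1}, and therefore $f=\sum_{\delta\in\Gamma} tr f^\delta=0$. Both spaces $\mc S^p_\Gamma(X)$ and $\mc S_\Gamma(\mf a^*_\e\times K/M)$ are Fr\'echet (the latter as a closed subspace of $\mc S(\mf a^*_\e\times K/M)$ cut out by the linear conditions $h^\delta\equiv0$ for $\delta\notin\Gamma$), so the open mapping theorem upgrades the continuous bijection to a topological isomorphism. The main obstacle I anticipate is the verification, in the surjectivity step, that the function $f$ built entrywise really has Helgason transform equal to $h$ rather than merely matching it on the finitely many isotypical components indexed by $\Gamma$; here the hypothesis $h^\delta\equiv0$ for $\delta\notin\Gamma$ combined with the Peter--Weyl sum in the $K/M$-variable is the decisive point.
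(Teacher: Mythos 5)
Your proposal is correct and follows essentially the same route as the paper: decompose $f$ into its finitely many $\delta$-isotypic components, apply Theorem \ref{Theorem:main-1} (via Lemma \ref{lem:ch2: hdelta} on the image side) to each component, reassemble with the Peter--Weyl sum, and finish with the open mapping theorem. The only cosmetic difference is that the paper cites the known continuity of the HFT from $\mc S^p(X)$ into $\mc S(\mf a^*_\e\times K/M)$ rather than rederiving forward continuity componentwise from Lemma \ref{Lem:continuity of the delta spherical transform}, but the substance is identical.
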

\vspace{.2in}
\textbf{Acknowledgments:}~ The author is thankful to
 Prof. Angela Pasquale of Universit\'{e} de Metz, France for her valuable suggestions.

%-------------------------------------------- -------------------------------- --------------------------------------
\bibliographystyle{amsalpha}
\bibliography{mybib}

\providecommand{\bysame}{\leavevmode\hbox to3em{\hrulefill}\thinspace}
\providecommand{\MR}{\relax\ifhmode\unskip\space\fi MR }
% \MRhref is called by the amsart/book/proc definition of \MR.
\providecommand{\MRhref}[2]{%
  \href{http://www.ams.org/mathscinet-getitem?mr=#1}{#2}
}
\providecommand{\href}[2]{#2}
\begin{thebibliography}{HC58b}

\bibitem[Ank87]{Anker87}
Jean-Philippe Anker, \emph{La forme exacte de l'estimation fondamentale de
  {H}arish-{C}handra}, C. R. Acad. Sci. Paris S\'er. I Math. \textbf{305}
  (1987), no.~9, 371--374.

\bibitem[Ank91]{Anker91}
\bysame, \emph{The spherical {F}ourier transform of rapidly decreasing
  functions. {A} simple proof of a characterization due to {H}arish-{C}handra,
  {H}elgason, {T}rombi, and {V}aradarajan}, J. Funct. Anal. \textbf{96} (1991),
  no.~2, 331--349.

\bibitem[Ank92]{Anker92}
\bysame, \emph{Sharp estimates for some functions of the {L}aplacian on
  noncompact symmetric spaces}, Duke Math. J. \textbf{65} (1992), no.~2,
  257--297. \MR{MR1150587 (93b:43007)}

\bibitem[Art79]{Arthur79}
James Arthur, \emph{Eisenstein series and the trace formula}, Automorphic
  forms, representations and $L$-functions (Proc. Sympos. Pure Math., Oregon
  State Univ., Corvallis, Ore., 1977), Part 1, Proc. Sympos. Pure Math.,
  XXXIII, Amer. Math. Soc., Providence, R.I., 1979, pp.~253--274. \MR{MR546601
  (81b:10020)}

\bibitem[EK76]{Eguchi76}
Masaaki Eguchi and Atsutaka Kowata, \emph{On the {F}ourier transform of rapidly
  decreasing functions of {$L\sp{p}$}\ type on a symmetric space}, Hiroshima
  Math. J. \textbf{6} (1976), no.~1, 143--158.

\bibitem[GV88]{Gangolli88}
Ramesh Gangolli and V.~S. Varadarajan, \emph{Harmonic analysis of spherical
  functions on real reductive groups}, Ergebnisse der Mathematik und ihrer
  Grenzgebiete [Results in Mathematics and Related Areas], vol. 101,
  Springer-Verlag, Berlin, 1988.

\bibitem[HC58a]{Harish58(I)}
Harish-Chandra, \emph{Spherical functions on a semisimple {L}ie group. {I}},
  Amer. J. Math. \textbf{80} (1958), 241--310.

\bibitem[HC58b]{Harish58(II)}
\bysame, \emph{Spherical functions on a semisimple {L}ie group. {II}}, Amer. J.
  Math. \textbf{80} (1958), 553--613.

\bibitem[HC66]{Harish66}
\bysame, \emph{Discrete series for semisimple {L}ie groups. {II}. {E}xplicit
  determination of the characters}, Acta Math. \textbf{116} (1966), 1--111.

\bibitem[Hel]{Helgason-gga}
Sigurdur Helgason, \emph{Groups and geometric analysis}, Mathematical Surveys
  and Monographs, vol.~83.

\bibitem[Hel94]{Helgason-gas}
\bysame, \emph{Geometric analysis on symmetric spaces}, Mathematical Surveys
  and Monographs, vol.~39, American Mathematical Society, Providence, RI, 1994.

\bibitem[Hel01]{Helgason-dgls}
\bysame, \emph{Differential geometry, {L}ie groups, and symmetric spaces},
  Graduate Studies in Mathematics, vol.~34, American Mathematical Society,
  Providence, RI, 2001.

\bibitem[JS07]{Jana}
Joydip Jana and Rudra~P. Sarkar, \emph{On the schwartz space isomorphism
  theorem for the rank-1 symmetric spaces}, Proc. Indian Acad. Sci.(Math. Sci.)
  \textbf{117} (2007), no.~3, 333--348.

\bibitem[Kna03]{Knapp03}
A.~W. Knapp, \emph{The {G}indikin-{K}arpelevi\v c formula and intertwining
  operators}, Lie groups and symmetric spaces, Amer. Math. Soc. Transl. Ser. 2,
  vol. 210, Amer. Math. Soc., Providence, RI, 2003, pp.~145--159.

\bibitem[Sug71]{Sugiura71}
Mitsuo Sugiura, \emph{Fourier series of smooth functions on compact {L}ie
  groups}, Osaka J. Math. \textbf{8} (1971), 33--47.

\bibitem[Sug90]{Sugiura-book}
\bysame, \emph{Unitary representations and harmonic analysis}, second ed.,
  North-Holland Mathematical Library, vol.~44, North-Holland Publishing Co.,
  Amsterdam, 1990, An introduction.

\bibitem[TV71]{Varadarajan-T71}
P.~C. Trombi and V.~S. Varadarajan, \emph{Spherical transforms of semisimple
  {L}ie groups}, Ann. of Math. (2) \textbf{94} (1971), 246--303. \MR{MR0289725
  (44 \#6913)}

\end{thebibliography}
%\end{linenumbers}
\end{document}